\numberwithin{equation}{section}
\newtheorem{Theorem}{Theorem}[section]
\newtheorem{Proposition}[Theorem]{Proposition}
\newtheorem{Lemma}[Theorem]{Lemma}
 { \theoremstyle{definition}
 \newtheorem{deff}[Theorem]{Definition}
\newtheorem{Remark}[Theorem]{Remark}
\newtheorem{Comment}[Theorem]{Comment}
}
\newcommand{\tr}{\operatorname{Tr}}
\renewcommand{\Re}{\operatorname{Re}}
\renewcommand{\Im}{\operatorname{Im}}
\newcommand{\Ad}{\operatorname{Ad}}
\newcommand{\ad}{\operatorname{ad}}
\newcommand{\e}{{\rm e}}
\newcommand{\mc}[1]{{\mathcal{#1}}}
\newcommand{\got}[1]{{\mathfrak{#1}}}
\newcommand{\db}[1]{{\mathbb{#1}}}
\newcommand{\pa}{\partial}
\newcommand{\K}{{\mathbb{K}}}
\newcommand{\R}{{\mathbb{R}}}
\newcommand{\C}{{\mathbb{C}}}
\newcommand{\N}{{\mathbb{N}}}
\newcommand{\gl}{{\mc{L}}}
\newcommand{\g}{{\got{g}}}
\newcommand{\m}{{\got{m}}}
\newcommand{\SL}{{\rm SL}(2,\R)}
\newcommand{\mr}[1]{{\mathrm{#1}}}
\def\ii{{\rm i}}
\newcommand{\un}{{\mathbbm{1}_n}}
\newcommand{\h}{{\got{h}}}
\begin{document}
\allowdisplaybreaks

\newcommand{\arXivNumber}{1903.10721}

\renewcommand{\PaperNumber}{096}

\FirstPageHeading

\ShortArticleName{The Real Jacobi Group Revisited}

\ArticleName{The Real Jacobi Group Revisited}

\Author{Stefan BERCEANU}

\AuthorNameForHeading{S.~Berceanu}

\Address{National Institute for Physics and Nuclear Engineering, Department of Theoretical Physics,\\
 PO BOX MG-6, Bucharest-Magurele, Romania}

\Email{\href{mailto:Berceanu@theory.nipne.ro}{Berceanu@theory.nipne.ro}}
\URLaddress{\url{http://www.theory.nipne.ro/index.php/mcp-home}}

\ArticleDates{Received May 09, 2019, in final form November 25, 2019; Published online December 07, 2019}

\Abstract{The real Jacobi group $G^J_1(\mathbb{R})$, defined as the semi-direct product of the group ${\rm SL}(2,\mathbb{R})$ with the Heisenberg group $H_1$, is embedded in a $4\times 4$ matrix realisation of the group ${\rm Sp}(2,\mathbb{R})$. The left-invariant one-forms on $G^J_1(\mathbb{R})$ and their dual orthogonal left-invariant vector fields are calculated in the S-coordinates $(x,y,\theta,p,q,\kappa)$, and a left-invariant metric depending of 4 parameters $(\alpha,\beta,\gamma,\delta)$ is obtained. An invariant metric depending of $(\alpha,\beta)$ in the variables $(x,y,\theta)$ on the Sasaki manifold ${\rm SL}(2,\mathbb{R})$ is presented. The well known K\"ahler balanced metric in the variables $(x,y,p,q)$ of the four-dimensional Siegel--Jacobi upper half-plane $\mathcal{X}^J_1=\frac{G^J_1(\mathbb{R})}{{\rm SO}(2) \times\mathbb{R}} \approx\mathcal{X}_1 \times\mathbb{R}^2$ depending of $(\alpha,\gamma)$ is written down as sum of the squares of four invariant one-forms, where $\mathcal{X}_1$ denotes the Siegel upper half-plane. The left-invariant metric in the variables $(x,y,p,q,\kappa)$ depending on $(\alpha,\gamma,\delta)$ of a five-dimensional manifold $\tilde{\mathcal{X}}^J_1= \frac{G^J_1(\mathbb{R})}{{\rm SO}(2)}\approx\mathcal{X}_1\times\mathbb{R}^3$ is determined.}

\Keywords{Jacobi group; invariant metric; Siegel--Jacobi upper half-plane; balanced metric; extended Siegel--Jacobi upper half-plane; naturally reductive manifold}
\Classification{32F45; 32Q15; 53C25; 53C22}

\section{Introduction}\label{section1}

The Jacobi group \cite{bs, ez} of degree $n$ is defined as the
semi-direct product $G^J_n=\mr{H}_n\rtimes{\rm Sp}(n,\R)_{\C}$,
where ${\rm Sp}(n,\R)_{\C}= {\rm Sp}(n,\C)\cap {\rm U}(n,n)$ and $\mr{H}_n$ denotes the
$(2n+1)$-dimensional Heisenberg group \cite{sbj,nou, Y02}. To the Jacobi
group $G^J_n $ it is associated a homogeneous manifold, called the
Siegel--Jacobi ball $\mc{D}^J_n$ \cite{sbj}, whose points are in
$\C^n\times\mc{D}_n$, i.e., a partially-bounded space. $\mc{D}_n$ denotes the
Siegel (open) ball of degree $n$. The non-compact Hermitian
symmetric space $ \operatorname{Sp}(n, \R
)_{\C}/\operatorname{U}(n)$ admits a matrix realization as a homogeneous bounded
domain \cite{helg}:
\begin{gather*}
\mc{D}_n:=\big\{W\in M (n, \C )\colon W=W^t,\,\un-W\bar{W}>0 \big\}.
\end{gather*} The Jacobi group is an interesting object in several branches of Mathematics, with important applications in Physics, see references in \cite{jac1,sbj,SB15,BERC08B,gem}.

Our special interest to the Jacobi group comes from the fact that $G^J_n$ is a coherent state (CS) group \cite{lis2,lis,mosc,mv,neeb96,neeb}, i.e., a group which has
orbits holomorphically embedded into a~projective Hilbert space, for a~precise definition see
 \cite[Definition~1]{sb6}, \cite{jac1}, \cite[Section~5.2.2]{GAB} and \cite[Remark~4.4]{gem}. To an element $X$ in the Lie algebra~$\g$
of $G$ we associated a first order differential operator $\db{X}$ on the homogenous space $G/H$, with polynomial holomorphic coefficients, see
\cite{sbl, morse,sbcag} for CS based on hermitian symmetric spaces, where the maximum degree of the polynomial is~2. In~\cite{sb6, SBAG01,last} we have advanced the hypothesis
that for CS groups the coefficients in $\db{X}$ are polynomial, and in~\cite{jac1} we have verified this for~$G^J_1$.

It was proved in \cite{sbj,nou,SB15} that the K\"ahler two-form on $\mc{D}^J_n$, invariant to the action of the Jacobi group $G^J_n$, has the expression
 \begin{gather}
- \ii\omega_{\mc{D}^J_n}(z,W) = \tfrac{k}{2}\tr (\mc{B}\wedge\bar{\mc{B}}) +\mu \tr \big(\mc{A}^t\bar{M}\wedge \bar{\mc{A}}\big), \qquad \mc{A} ={\rm d} z+ {\rm d} W\bar{\eta},\nonumber\\
\mc{B} = M{\rm d} W,\qquad M = (\un-W\bar{W})^{-1},\qquad z\in\C^n, \qquad W\in\mc{D}_n,\label{aabX}
\\ \label{etaZ}
\eta=M(z+W\bar{z}).
\end{gather}
It was emphasized \cite{nou} that the change of coordinates $(z,W)\rightarrow(\eta,W)$, called FC-transform, has the meaning of passing from un-normalized to
normalized Perelomov CS vectors~\cite{perG}. Also, the FC-transform \eqref{etaZ} is a homogeneous K\"ahler diffeomorphism from $\mc{D}^J_n$ to $\C^n\times\mc{D}_n$, in the meaning of the fundamental conjecture for homogeneous K\"ahler manifolds~\cite{DN,gpv, GV}.

We reproduce a proposition which summarizes some of the geometric properties of the Jacobi group and the Siegel--Jacobi ball \cite{SB15,GAB}, see the definitions of the
notions appearing in the enunciation below in \cite{jac1,csg,berr,GAB} and also Appendix~\ref{compl} for some notions on Berezin's quantization:
\begin{Proposition}\label{TOTU}\quad\begin{enumerate}\itemsep=0pt
\item[$(i)$] \looseness=-1 The Jacobi group $G^J_n$ is a unimodular, non-reductive, algebraic group of Harish-Chandra type.
\item[$(ii)$] The Siegel--Jacobi domain $\mc{D}^J_n$ is a homogeneous, reductive, non-symmetric manifold associated to the Jacobi group $G^J_n$ by the generalized Harish-Chandra embedding.
\item[$(iii)$] The homogeneous K\"ahler manifold $\mc{D}^J_n$ is contractible.
\item[$(iv)$] The K\"ahler potential of the Siegel--Jacobi ball is global. $\mc{D}^J_n$ is a Q.-K.~Lu manifold, with nowhere vanishing diastasis.
\item[$(v)$] The manifold $\mc{D}^J_n$ is a quantizable manifold.
\item[$(vi)$] The manifold $\mc{D}^J_n$ is projectively induced, and the Jacobi group $G^J_n$ is a CS-type group.
\item[$(vii)$] The Siegel--Jacobi ball $\mc{D}^J_n$ is not an Einstein manifold with respect to the balanced metric corresponding to the K\"ahler two-form~\eqref{aabX}, but it is one with respect to the Bergman metric corresponding to the Bergman K\"ahler two-form.
\item[$(viii)$]The scalar curvature is constant and negative.
\end{enumerate}
\end{Proposition}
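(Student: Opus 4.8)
The plan is to dispatch the eight assertions separately, leaning on three structural inputs already in place: the semidirect decomposition $G^J_n = \mr{H}_n\rtimes\mathrm{Sp}(n,\R)_{\C}$, the FC-transform \eqref{etaZ} identifying $\mc{D}^J_n$ with the K\"ahler product $\C^n\times\mc{D}_n$, and the coherent-state embedding into $\Ph$ underlying the Berezin construction. For (i), I would read unimodularity off the factors: $\mathrm{Sp}(n,\R)_{\C}$ is semisimple and $\mr{H}_n$ is nilpotent, both unimodular, while the $\mathrm{Sp}$-factor acts on $\mr{H}_n$ by symplectic automorphisms, which are measure-preserving ($\det\Ad=1$ on the $2n$-dimensional part, trivial on the centre), so the left and right Haar measures coincide. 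Non-reductivity is immediate once one notes that $\g=\h_n\rtimes\got{sp}(n,\R)$ carries the non-trivial nilpotent radical $\h_n$ and hence is not a direct sum of a semisimple ideal and its centre; algebraicity follows from the explicit polynomial matrix realisation; and the Harish-Chandra-type property I would verify against its definition by producing the triangular decomposition $\gc=\got{p}^{+}\oplus\got{k}_{\C}\oplus\got{p}^{-}$ compatible with the generalized Harish-Chandra embedding and checking that the relevant exponential is a diffeomorphism onto an open set.

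For (ii), homogeneity is transitivity of the $G^J_n$-action, while the reductive but non-symmetric character follows by exhibiting a complement $\m$ with $\Ad(H)\m\subseteq\m$ and then showing $[\m,\m]\not\subseteq\h$, which breaks the symmetric-space axiom. Assertion (iii) is then immediate, since $\C^n\times\mc{D}_n$ is the product of a vector space with the convex bounded domain $\mc{D}_n$, hence contractible. For (iv) I would write down the global K\"ahler potential from the Berezin kernel (the coherent-state overlap) and compute Calabi's diastasis $D$ from it, checking that it is globally defined and strictly positive off the diagonal. Quantizability (v) follows formally: on a contractible manifold $H^2\big(\mc{D}^J_n;\Z\big)=0$, so the K\"ahler class is trivially integral and a prequantum line bundle exists.

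Assertion (vi) is essentially definitional in this setting: projective inducedness is the statement that the metric is the pullback under the coherent-state map of the Fubini--Study metric on $\Ph$, and the CS-type property is the defining feature recalled in the introduction. This leaves the curvature statements (vii) and (viii). From the explicit potential I would form the K\"ahler metric $g_{i\bar j}=\partial_i\partial_{\bar j}\log K$, invert it, and compute the Ricci tensor $R_{i\bar j}=-\partial_i\partial_{\bar j}\log\det\big(g_{k\bar l}\big)$ together with the scalar curvature $s=g^{i\bar j}R_{i\bar j}$.

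The main obstacle is precisely this curvature pair. The delicate point is that the $\C^n$-fibre and $\mc{D}_n$-base directions rescale differently under the balanced metric and under the Bergman metric, so the Einstein proportionality $R_{i\bar j}=\lambda\, g_{i\bar j}$ can \emph{fail} for the former yet \emph{hold} for the latter; establishing the failure requires exhibiting at least one index pair where the ratio $R_{i\bar j}/g_{i\bar j}$ is not constant, whereas for the Bergman metric one must verify the exact proportionality. Computing $s$ and showing it is a negative constant is the remaining quantitative task, and I expect the index bookkeeping for $\det(g)$ and its logarithmic Hessian to be the genuinely laborious step.
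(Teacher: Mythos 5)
You should first be aware that the paper itself supplies no proof of Proposition~\ref{TOTU}: it is explicitly ``reproduced'' from the author's earlier works \cite{SB15,GAB}, with the definitions delegated to \cite{jac1,csg,berr,GAB} and Appendix~\ref{compl}. So there is no argument in this paper to match yours against; the comparison has to be with the route taken in those references, which is essentially the one you sketch. For the soft items your reasoning is sound and matches that route: unimodularity of $G^J_n=\mr{H}_n\rtimes{\rm Sp}(n,\R)_{\C}$ from unimodularity of the factors plus the measure-preserving (unit-determinant, centre-fixing) action of the symplectic factor; non-reductivity from the non-central nilpotent radical $\got{h}_n$; contractibility of $\mc{D}^J_n$ via the FC-identification with $\C^n\times\mc{D}_n$ (indeed the points of $\mc{D}^J_n$ already lie in $\C^n\times\mc{D}_n$); quantizability from triviality of $H^2$ on a contractible manifold together with the explicit CS line bundle; and reductive-but-not-symmetric from exhibiting $\m$ with $[\h,\m]\subset\m$ but $[\m,\m]\nsubseteq\h$ (the paper does exactly this computation for $n=1$ in the proof of Proposition~\ref{PRLST}).

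The genuine gap is that items $(iv)$, $(vii)$ and $(viii)$ --- which carry essentially all the quantitative content --- are announced rather than proved. You correctly identify that one must compute $R_{i\bar j}=-\pa_i\pa_{\bar j}\log\det(g_{k\bar l})$ for the balanced metric coming from \eqref{aabX}, exhibit an index pair where $R_{i\bar j}/g_{i\bar j}$ is non-constant, verify exact proportionality for the Bergman metric, and evaluate $s=g^{i\bar j}R_{i\bar j}$; but none of this is carried out, and without it the assertions that $\mc{D}^J_n$ fails to be Einstein for the balanced metric, is Einstein for the Bergman metric, and has constant negative scalar curvature remain unestablished. The same applies to the nowhere-vanishing diastasis in $(iv)$ (this is the Q.-K.~Lu property, proved in \cite{berr,SB15} by an explicit estimate on the Calabi diastasis built from the CS overlap kernel) and to the Harish-Chandra-type structure in $(i)$, where you would need to actually produce the triangular decomposition $\gc=\got{p}^{+}\oplus\got{k}_{\C}\oplus\got{p}^{-}$ for the non-reductive algebra $\got{g}^J_n$ and check the defining conditions, not merely note that they must be checked. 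As a plan your proposal is faithful to how \cite{SB15,GAB} proceed; as a proof it stops exactly where the work begins.
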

The properties of geodesics on the Siegel--Jacobi disk $\mc{D}^J_1$ have been investigated in \cite{jac1,csg,berr}, while in \cite{GAB} we have considered geodesics on the Siegel--Jacobi ball $\mc{D}^J_n$. We have explicitly determined the equations of geodesics on $\mc{D}^J_n$. We have proved that the FC-transform \eqref{etaZ} is not a geodesic mapping on the non-symmetric space $\mc{D}^J_n$, see definition in~\cite{mikes}.

However, it was not yet anlayzed whether the Siegel--Jacobi ball is a naturally reductive space or not, even if its points are in $\C^n\times \mc{D}_n$, both manifolds being naturally reductive, see Definition~\ref{DEF2} and Proposition~\ref{PR5}. In fact, this problem was the initial point of the present investigation. The answer to this question has significance in our approach~\cite{csg} to the geometry of the Siegel--Jacobi ball via CS in the meaning of Perelomov~\cite{perG}. We have proved in~\cite{ber97} that for symmetric manifolds the FC-transform gives geodesics, but the Siegel--Jacobi ball is not a~symmetric space. Similar properties are expected for naturally reductive spaces \cite{BERA,BERB}.

In the standard procedure of CS, see \cite{sbj,perG}, the K\"ahler two-form on a homogenous manifold~$M$ is obtained from the K\"ahler potential $f(z,\bar{z})=\log K(z,\bar{z})$ via the recipe
\begin{gather}\label{KALP}-\ii\omega=\pa\bar{\pa}f, \end{gather}
where
$K(z,z):=(e_{\bar{z}},e_{\bar{z}})$ is the scalar product of two CS at
$z\in M$. In~\cite{SB15} we have underlined that the metric associated to the K\"ahler
two-form~\eqref{KALP} is a {\it balanced metric}, see more details in Appendix~\ref{compl}.

The real Jacobi group of degree $n$ is defined as $G^J_n(\R):={\rm Sp}(n,\R)\ltimes H_n$,
where $H_n$ is the real $(2n+1)$-dimensional Heisenberg group. ${\rm Sp}(n,\R)_{\C}$~and~$G^J_n$ are isomorphic to~${\rm Sp}(n,\R)$
and~$G^J_n(\R)$ respectively as real Lie groups, see \cite[Proposition~2]{nou}.

We have applied the partial Cayley transform from the Siegel--Jacobi ball to the
 Siegel--Jacobi upper half-plane and we have obtained the balanced
 metric on $\mc{X}^J_n$, see \cite[Proposition~3]{nou}.

However, the mentioned procedure of obtaining the invariant metric on
homogeneous K\"{a}hler manifolds works only for even
dimensional CS manifolds. For example,
starting from the six dimensional real Jacobi group
$G^J_1(\R)=\SL\ltimes H_1$, we have obtained the K\"{a}hler
invariant two-form $\omega_{\mc{X}^J_1}$ \eqref{BFR} on the Siegel--Jacobi upper
half-plane, a four dimensional homogeneous manifold attached to the
Jacobi group, $\mc{X}^J_1=\frac{G^J_1(\R)}{{\rm SO}(2)\times\R}\approx
\mc{X}_1\times\R^2$ \cite{jac1,BER77,FC,csg}, obtained previously by
Berndt~\cite{bern84,bern}, and K\"ahler~\cite{cal3,cal}.

 In the present
paper we determine the invariant metric on a five
dimensional homogeneous manifold, here called the extended Siegel--Jacobi
upper half-plane, denoted
${\tilde{\mc{X}}^J_1}=\frac{G^J_1(\R)}{{\rm SO}(2)}\approx\mc{X}_1\times\R^3$. It will
be important to find applications in Physics of the invariant metric
\eqref{linvG} on the five-dimensional manifold ${\tilde{\mc{X}}^J_1}$.

\looseness=1 In order to obtain invariant metric on odd dimensional manifolds, we are obliged to change our strategy applied previously to get the
invariant metric on homogeneous K\"ahler manifolds. Instead of the
mentioned first order differential operators on $M=G/H$ with holomorphic
polynomial coefficients $\db{X}$ associated to $X$ in the Lie algebra
$\g$ of $G$ \cite{sb6,SBAG01}, we have to use the fundamental vector field $X^*$ associated
with $X$, see Appendix~\ref{ISO}.
We have to abandon the approach in
which the Jacobi algebra is defined as the semi-direct sum
$\got{g}^J_1:= \got{h}_1\rtimes \got{su}(1,1)$, where only the
generators of $\got{su}(1,1)$ have a matrix realization, see~\cite{jac1} and the summary in Section~\ref{prJ}.

The approach of mathematicians is to consider the real Jacobi group $G^J_1(\R)$
as subgroup of ${\rm Sp}(2,\R)$. In the present paper
we follow the notation in \cite{bs, ez} for the real Jacobi group $G^J_1(\R)$,
realized as submatrices of ${\rm Sp}(2,\R)$ of the form
\begin{gather}\label{SP2R}
\left(\begin{matrix} a& 0&b & q\\
\lambda &1& \mu & \kappa\\c & 0& d & -p\\
 0& 0& 0& 1\end{matrix}\right),\qquad M=
 \left(\begin{matrix}a&b\\c&d\end{matrix}\right),\qquad \det M
 =1, \end{gather}
where
\begin{gather}\label{DEFY}Y:=(p,q)=XM^{-1}=(\lambda,\mu) \left(\begin{matrix}a&b\\c&d\end{matrix}\right)^{-1}=(\lambda d-\mu
 c,-\lambda b+\mu a)\end{gather}
is related to the Heisenberg group $H_1$.

To get the invariant metric on ${\tilde{\mc{X}}^J_1}$, we have determined the
invariant one-forms $\lambda_1,\dots,\lambda_6$ on $G^J_1(\R)$, the main tool of the present
paper, see details on the method in Appendix~\ref{S911}. Then we have determined the invariant vector fields $L^j$
verifying the relations $\langle \lambda_i\,|\,L^j\rangle=\delta_{ij}$,
$i,j=1,\dots,6$, such that $L^j$
are orthonormal with respect to the metric
${\rm d}s^2_{G^J_1(\R)}$ in the $S$-variables $(x,y,\theta,p,q,\kappa)$,
see \cite[p.~10]{bs}. This is the idea of the method of the moving
frame of E.~Cartan \cite{cart4,cart5,ev} explained in Section~\ref{S54}.

Firstly, we recover the well known two-parameter balanced metric on $\mc{X}^J_1$ as sum of squares of the invariant one-forms $\lambda_1$, $\lambda_2$, $\lambda_4$, $\lambda_5$. Then the invariant metric on ${\tilde{\mc{X}}^J_1}$ is obtained as the sum of the squares of $\lambda_1,\lambda_2,\lambda_4,\dots,\lambda_6$.

\looseness= 1 The paper is laid out as follows. In Section~\ref{prJ} we recall how we
have obtained the K\"ahler two-forms on the Siegel--Jacobi disk $\mc{D}^J_1$
and on the Siegel--Jacobi upper half-plane $\mc{X}^J_1$, specifying
the FC-transforms. Section~\ref{HS1} describes the real Heisenberg group $H_1$
embedded into ${\rm Sp}(2,\R)$: invariant one-forms, invariant
metrics in the variables $(\lambda,\mu,\kappa)$. Note that in the formula~\eqref{MTRSINV} the last parenthesis $({\rm d} \kappa
-\mu{\rm d}\lambda+\lambda{\rm d} \mu)^2$ replaces $({\rm d} \kappa)^2$ on the
Euclidean space $\R^3(\kappa,\lambda,\mu)$ and the idea of the paper is to see the
effect of this substitution in the invariant metric of the five-dimensional manifold
$\tilde{\mc{X}}^J_1$. Section~\ref{SLSR} deals with the $\SL$ group as
subgroup of ${\rm Sp}(2,\R)$
in the variables $(x,y,\theta)$, which describe the Iwasawa
decomposition.
${\rm SL}(2,\R)$ is treated as a Sasaki manifold, with the invariant
metric written down as sum of squares of the invariant
one-forms $\lambda_1,\dots,\lambda_3$ \`{a} la Milnor~\cite{ml}, while the metric on
$\mc{X}_1$ is just $\lambda_1^2+\lambda_2^2$. Invariant metrics on~$\SL$ in
other coordinates previously obtained by other authors are mentioned in Comment~\ref{COM2}. Details on the calculations referring to $\SL$ are presented also in
Appendix~\ref{FKVF}. Section~\ref{JG1} presents the real Jacobi
group $G^J_1(\R)$
 in the EZ and S-coordinates \cite{bs}. The action of the
reduced Jacobi group~$G^J(\R)_0$ on the
four-dimensional manifold $\mc{X}^J_1$ is recalled~\cite{jac1,BER77}
and the fundamental vector fields on it are obtained. Also the action
of $G^J_1(\R)$ on the 5-dimensional manifold
$\tilde{\mc{X}}^J_1$, called extended Siegel--Jacobi upper half-plane,
is established in Lemma~\ref{LEMN}. The
well known
K\"ahlerian balanced metric on the Siegel--Jacobi upper-half plane is
written down as
sum of the square of four invariant one-forms in Section~\ref{S54}. For this we have
obtained the invariant one-forms on $G^J_1(\R)$ in \eqref{LFLf}.
In Comment \ref{CM1} we discuss the connection of our previous papers~\cite{jac1,mlad,csg}
on $G^J_1(\R)$
with the
papers of Berndt~\cite{bern84,bern,bs} and K\"ahler~\cite{cal3,cal}, developed
by
 Yang \cite{Y02,Yan,Y07,Y08,Y10} for $G^J_n(\R)$. We have also
 determined the Killing vector fields as fundamental
 vector fields on the Siegel--Jacobi upper half-plane with the balanced
 metric~\eqref{METRS2}. The same procedure is used
to establish the invariant metric on the extended Siegel--Jacobi upper
half-plane, which is not a Sasaki manifold. All the results concerning the invariant metrics on
homogenous manifolds of dimensions 2--6 attached to the real Jacobi
group of degree 1 are summarized in
Theorem~\ref{BIGTH}. As a consequence, we show by direct calculation
that the Siegel--Jacobi upper half-plane is not a naturally reductive
space with respect to the balanced metric, but it is one in the
coordinates furnished by the FC-transform. In fact, this is the
answer to the starting point of our investigation referring to the
natural reductivity of $\mc{X}^J_1$. We also calculate the
g.o.\ vectors~\cite{kwv} on $\mc{X}^J_1$ applying the geodesic Lemma~\ref{PRR}.

In four appendices we recall several basic mathematical concepts used in paper. Appendix~\ref{ISO} is devoted to naturally reductive spaces \cite{atri,kn, nomizu}. We have included the notions of Killing vectors, Riemannian homogeneous spaces \cite{as}, the list of 3 and 4-dimensional naturally reductive spaces \cite{btv,bv,kw4, tv}, the famous BCV-spaces \cite{BIANCHI,cart,vr}. Appendix~\ref{compl} recalls the notion of balanced metric in the context of Berezin quantization. The Killing vectors on $S^2$, $\mc{D}_1$, $\R^2$ are presented in Appendix~\ref{sfera}. Appendix~\ref{appendix4} refers to notions on Sasaki manifolds \cite{BL,boga, sas}.

The main results of this paper are stated in Lemma~\ref{LEMN}, Remark~\ref{Rm1}, Propositions~\ref{X15}--\ref{PRLST}, and Theorem~\ref{BIGTH}.

\textbf{Notation.} We denote by $\mathbb{R}$, $\mathbb{C}$, $\mathbb{Z}$,
and $\mathbb{N}$ the field of real numbers, the field of complex numbers,
the ring of integers, and the set of non-negative integers, respectively. We denote the imaginary unit
$\sqrt{-1}$ by~$\ii$, and the Real and Imaginary part of a complex
number by $\Re$ and respectively $\Im$, i.e., we have for $z\in\C$,
$z=\Re z+\ii \Im z$, and $\bar{z}= cc(z)= \Re z-\ii \Im z$. We denote by~$|M|$ or by $\det(M)$ the determinant of the matrix~$M$. $M(n,m,\db{F})$ denotes the set
of $n\times m$ matrices with entries in the field $\db{F}$. We denote by $M(n,\db{F})$ the set $M(n,n,\db{F})$. If $A\in
M_n(\db{F})$, then
$A^t$ ($A^{\dagger}$) denotes the transpose (respectively, the
Hermitian conjugate) of~$A$. $\un$~denotes the identity matrix of
degree $n$. We consider a complex separable Hilbert space~$\got{H}$ endowed with a~scalar product
which is antilinear in the first argument,
$(\lambda x,y)=\bar{\lambda}(x,y)$, $x,y\in\got{H}$,
$\lambda\in\C\setminus 0$. We denote by ``${\rm d}$'' the differential. We use Einstein convention that repeated indices are
implicitly summed over. The set of vector fields (1-forms) are denoted
by $\got{D}^1$ ($\got{D}_1$). If $\lambda\in\got{D}_1$ and $L\in\got{D}^1$, then
$\langle \lambda\,|\,L\rangle $ denotes their pairing.
We use the symbol ``$\tr$'' to denote the trace
of a~matrix. If~$X_i$, $i=1,\dots,n$ are vectors in vector space $V$
over the field $\db{F}$, then $\langle X_1,X_2,\dots,X_n\rangle_{
\db{F}}$ denotes their span over $\db{F}$.

\section{The starting point in the coherent states approach}\label{prJ}
We recall firstly our initial approach \cite{BER05,jac1} to the Jacobi group
$G^J_1$ which we have followed in all our papers devoted to the
Jacobi group, except~\cite{BERC08B} and~\cite{gem}. The Lie algebra
attached to~$G^J_1$ is
\begin{gather*}
\got{g}^J_1:= \got{h}_1\rtimes \got{su}(1,1),
\end{gather*}
where $\got{h}_1$ is an ideal in $\got{g}^J_1$,
i.e., $\big[\got{h}_1,\got{g}^J_1\big]=\got{h}_1$,
determined by the commutation relations
\begin{subequations}\label{baza11}
\begin{gather}
\big [a,a^{\dagger}\big]=1,\label{baza1} \\
\label{baza2}
 \big[ K_0, K_{\pm}\big]=\pm K_{\pm} ,\qquad [ K_-,K_+ ]=2K_0 , \\
 \nonumber[a,K_+ ]=a^{\dagger} ,\qquad \big[ K_-,a^{\dagger}\big]=a, \\
\nonumber\big[ K_+,a^{\dagger}\big]= [ K_-,a ]= 0 ,\\
\nonumber \big[ K_0 , a^{\dagger}\big]=\frac{1}{2}a^{\dagger}, \qquad [ K_0,a ]=-\frac{1}{2}a .
\end{gather}
\end{subequations}
The Heisenberg algebra is \begin{gather*}
\got{h}_1\equiv\got{g}_{H_1}=
\langle \ii s 1+xa^{\dagger}-\bar{x}a\rangle _{s\in\R ,\,x\in\C} ,\end{gather*}
 where ${ a}^{\dagger}$ (${ a}$) are the boson creation
(respectively, annihilation)
operators which verify the canonical commutation relations
(\ref{baza1}). The Lie algebra of the group ${\rm SU}(1,1)$ is
\begin{gather*}
\got{su}(1,1)=
\langle 2\ii\theta K_0+yK_+-\bar{y}K_-\rangle _{\theta\in\R ,\, y\in\C} , \end{gather*}
where the generators $K_0$, $K_+$, $K_-$ verify the standard commutation relations~(\ref{baza2}), and we have considered the matrix realization
\begin{gather}\label{nr2}
K_0=\frac{1}{2}\left(\begin{matrix}1 &0 \\0 &
-1\end{matrix}\right),\qquad
K_+=\ii\left(\begin{matrix}0 &1 \\ 0 & 0 \end{matrix}\right),\qquad
K_-=\ii \left(\begin{matrix}0 &0 \\ 1 & 0 \end{matrix}\right) .
\end{gather}
We have determined the invariant metric on the Siegel--Jacobi upper half-plane $\mc{X}^J_1$ from the metric on
$\mc{D}^J_1$ and the FC-transforms, see
\cite{jac1,BER77,FC,SB15}. For the actions in Proposition \ref{PRFC}, where $G^J_0={\rm SU}(1,1)\ltimes \C$,
see \cite[Proposition~2]{nou} and Lemma \ref{LEMN} below.

\begin{Proposition}\label{PRFC}Let us consider the K\"ahler two-form
\begin{gather}\label{kk1}
 -\ii \omega_{\mc{D}^J_1}
(w,z)=\frac{2k}{(1-|w|^2)^2}{\rm d} w\wedge{\rm d}
 \bar{w}+\mu \frac{A\wedge\bar{A}}{1-|w|^2},\qquad A=A(w,z)={\rm d}
 z+\bar{\eta}{\rm d} w,
\end{gather}
$G^J_0$-invariant to the action on the Siegel--Jacobi disk $\mc{D}^J_1$
\begin{gather*}\left({\rm SU}(1,1)\times\C^2\ni\left(\begin{matrix}p&q\\\bar{q}&\bar{p}\end{matrix}\right),\alpha\right)\cdot
(w,z)=\left(\frac{pw+q}{\bar{q}w+\bar{p}},\frac{z+\alpha-\bar{\alpha}\omega}{\bar{q}w+\bar{p}}\right).\end{gather*}

We have the homogeneous K\"ahler diffeomorphism
${\rm FC}\colon (\mc{D}^J_1,\omega_{\mc{D}^J_1})\rightarrow(\mc{D}_1,\omega_{\mc{D}_1})\oplus
(\C,\omega_{\C})$, $-\ii \omega_{\C}={\rm d}\eta\wedge{\rm d} \bar{\eta},$
\begin{gather*}
{\rm FC}\colon \
 z=\eta-w\bar{\eta},\qquad {\rm FC}^{-1}\colon \ \eta=\frac{z+\bar{z}w}{1-|w|^2}, \end{gather*}
and \[{\rm FC}\colon \ A(w,z)\rightarrow {\rm d} \eta -w{\rm d} \bar{\eta}.\]
The K\"ahler two-form~\eqref{kk1} is invariant to the action $(g,\alpha)\times (\eta,w)= (\eta_1,w_1)$ of $G^J_0$ on $\C\times\mc{D}_1$: $\eta_1=p(\eta+\alpha)+q(\bar{\eta}+\bar{\alpha})$.

Using the partial Cayley transform
\begin{gather*}
\Phi^{-1}\colon \ v=\ii \frac{1+w}{1-w},\qquad u=\frac{z}{1-w}, \qquad w,z\in\C,\qquad |w|<1,\\
\Phi\colon \ w=\frac{v-\ii}{v+\ii},\qquad z=2\ii \frac{u}{v+\ii},\qquad v,u\in\C,\qquad \Im v>0,
\end{gather*}
we get the K\"ahler two-form
\begin{gather}
- \ii \omega_{\mc{X}^J_1}(v,u) = \frac{2k}{(\bar{v} - v)^2} + \frac{2\mu}{\ii(\bar{v} - v)}B\wedge\bar{B}, \nonumber\\
B(v,u) = A\left(\frac{v - \ii}{v+ \ii},\frac{2\ii u}{v + \ii}\right)= {\rm d} u - \frac{u - \bar{u}}{ v- \bar{v}}{\rm d} v,\label{BFR}
 \end{gather}
 $G^J(\R)_0$-invariant to the action on the Siegel--Jacobi upper
half-plane $\mc{X}^J_1$\begin{gather}\label{TSLL}\left(\SL\times\C^2\ni\left(\begin{matrix}a&b\\c&d\end{matrix}\right),\alpha\right)\cdot
(v,u)=\left(\frac{av+b}{cv+d},\frac{u+nv+m}{cv+d}\right),\qquad \alpha= m+\ii n.\end{gather}

We have the homogeneous K\"ahler diffeomorphism
\begin{gather*}
{\rm FC}_1\colon \ \big(\mc{X}^J_1,\omega_{\mc{X}^J_1}\big)\rightarrow(\mc{X}_1,\omega_{\mc{D}_1})\oplus (\C,\omega_{\C}),\\
{\rm FC}_1\colon \ 2\ii u=(v+\ii)\eta-(v-\ii)\bar{\eta}, \qquad {\rm FC}^{-1}_1 \colon \ \eta=\frac{u\bar{v}-\bar{u}v + \ii(\bar{u}-u)}{\bar{v}-v}.
\end{gather*}
The situation is summarized in the commutative diagram of the table FC-transforms
\begin{gather*}
\xymatrix{
 \mc{D}^J_1\ni (\omega;z) \ar[r]^{\rm FC} \ar[d]_{\Phi^{-1}} & (\omega;\eta)\in \mc{D}_1\times\C \ar[d]^{\Phi'^{-1}} \\
 \mc{X}^J_1 \ni (v;u) \ar[r]_{{\rm FC}_1\ } & (v;p,q)\in \mc{X}_1\times\C, }
\end{gather*}
where
\begin{gather*}
\Phi'^{-1}\colon \ \eta\rightarrow q+\ii p, \qquad \Phi'\colon \ (q,p)\rightarrow \eta=q+\ii p.\end{gather*}
\end{Proposition}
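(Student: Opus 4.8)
The plan is to read the Proposition as a bundle of explicit verifications and to split each of them into a \emph{base} part living on the Poincaré disk $\mc{D}_1$ (resp.\ the upper half-plane $\mc{X}_1$) and a \emph{fibre} part carried by the one-form $A$ (resp.\ $B$); the base parts are classical and the whole difficulty sits in the fibre. I would begin with the claim that the FC-transform splits \eqref{kk1}. Substituting $z=\eta-w\bar\eta$ gives ${\rm d} z={\rm d}\eta-\bar\eta\,{\rm d} w-w\,{\rm d}\bar\eta$, hence $A={\rm d} z+\bar\eta\,{\rm d} w={\rm d}\eta-w\,{\rm d}\bar\eta$, which is exactly the stated image of $A$ under FC. A two-line bilinear computation then gives $A\wedge\bar A=(1-|w|^2)\,{\rm d}\eta\wedge{\rm d}\bar\eta$, so $\mu(1-|w|^2)^{-1}A\wedge\bar A=\mu\,{\rm d}\eta\wedge{\rm d}\bar\eta$, a multiple of $-\ii\omega_\C$. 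Since the first summand of \eqref{kk1} depends only on $w$, the form decomposes as a base plus a fibre form, proving FC is a homogeneous Kähler diffeomorphism onto the product; that the displayed formulas for FC and ${\rm FC}^{-1}$ are mutually inverse is a one-line check.

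For the $G^J_0$-invariance of \eqref{kk1} I would work in the FC-coordinates, where the action becomes a product action. The essential step is to verify that FC intertwines the $(w,z)$-action with the map $w\mapsto w_1=\frac{pw+q}{\bar q w+\bar p}$, $\eta\mapsto\eta_1=p(\eta+\alpha)+q(\bar\eta+\bar\alpha)$ asserted in the Proposition; this is a direct substitution of the action formulas into ${\rm FC}^{-1}$. Once this is done, invariance is immediate: the base factor $\frac{2k}{(1-|w|^2)^2}{\rm d} w\wedge{\rm d}\bar w$ is the standard ${\rm SU}(1,1)$-invariant Poincaré form, while for the fibre ${\rm d}\eta_1=p\,{\rm d}\eta+q\,{\rm d}\bar\eta$ gives ${\rm d}\eta_1\wedge{\rm d}\bar\eta_1=(|p|^2-|q|^2)\,{\rm d}\eta\wedge{\rm d}\bar\eta={\rm d}\eta\wedge{\rm d}\bar\eta$ because $|p|^2-|q|^2=1$. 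Transporting this back through FC establishes invariance on $\mc{D}^J_1$. Equivalently, one may argue via \eqref{KALP}, showing that the coherent-state potential $f=\log K$ changes by a coboundary $\phi+\bar\phi$ with automorphy factor $\bar q w+\bar p$, so that $\partial\bar\partial f$ is invariant.

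The passage to $\mc{X}^J_1$ is the partial Cayley transform $w=\frac{v-\ii}{v+\ii}$, $z=\frac{2\ii u}{v+\ii}$. For the base, the standard identity $1-|w|^2=\frac{2\ii(\bar v-v)}{|v+\ii|^2}$ turns the Poincaré form into the $\mc{X}_1$ form with coefficient $\frac{2k}{(\bar v-v)^2}$ as in \eqref{BFR}, once the orientation of ${\rm d} v\wedge{\rm d}\bar v$ is fixed. For the fibre I would substitute the Cayley relations into $A={\rm d} z+\bar\eta\,{\rm d} w$ and simplify, finding $A=\frac{2\ii}{v+\ii}\,B$ with $B={\rm d} u-\frac{u-\bar u}{v-\bar v}{\rm d} v$; the key intermediate identity is $\bar\eta-u=\frac{(u-\bar u)(v+\ii)}{\bar v-v}$. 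Reinserting the weight, the prefactor produced by $A\wedge\bar A=\frac{4}{|v+\ii|^2}B\wedge\bar B$ cancels against the $|v+\ii|^2$ in $1-|w|^2$, leaving exactly $\frac{2\mu}{\ii(\bar v-v)}B\wedge\bar B$. Invariance under \eqref{TSLL} then follows by transporting the $G^J_0$-invariance along the biholomorphism $\Phi$, which intertwines the two actions (or by repeating the coboundary argument with automorphy factor $cv+d$); the diffeomorphism ${\rm FC}_1$ and the commutativity of the diagram drop out by composing FC with $\Phi$ and checking the identities $2\ii u=(v+\ii)\eta-(v-\ii)\bar\eta$ and $\eta=q+\ii p$.

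The main obstacle is entirely in the fibre bookkeeping. The base factors are classical and essentially automatic, but tracking $A$ through the twist $\bar\eta\,{\rm d} w$ and then through the Cayley transform is where the factors must conspire: the seemingly off-by-a-factor relation $A=\frac{2\ii}{v+\ii}B$ is exactly what makes the final weight in \eqref{BFR} come out clean, and keeping the signs of the $(\bar v-v)$ powers and of ${\rm d} v\wedge{\rm d}\bar v$ consistent is the most error-prone part. Conceptually the decisive inputs are the two automorphy factors $\bar q w+\bar p$ and $cv+d$; computationally the decisive step is the reduction of $A$ to $B$ under the partial Cayley transform.
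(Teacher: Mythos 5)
Your verification is correct, and the identities you single out all check: $A\mapsto{\rm d}\eta-w\,{\rm d}\bar\eta$ under FC, $A\wedge\bar A=(1-|w|^2)\,{\rm d}\eta\wedge{\rm d}\bar\eta$, $1-|w|^2=2\ii(\bar v-v)/|v+\ii|^2$, and in particular the fibre identity $\bar\eta-u=(u-\bar u)(v+\ii)/(\bar v-v)$ giving $A=\frac{2\ii}{v+\ii}B$, which is exactly what produces the weight $\frac{2\mu}{\ii(\bar v-v)}$ in \eqref{BFR}. The paper, however, does not prove the Proposition in this self-contained way: it is presented as a recollection of results from the author's earlier works (\cite{jac1,BER77,FC,SB15} and \cite[Propositions~2 and~3]{nou}), and the only step carried out explicitly in the text is the invariance of \eqref{BFR} under the action \eqref{TSLL}, done directly from the transformation laws \eqref{TR11} together with $B_1=B/\Lambda$, $\Lambda=cv+d$, so that the factors of $|\Lambda|^2$ cancel termwise. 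Your route to that same step --- transporting the already-established $G^J_0$-invariance on $\mc{D}^J_1$ through the partial Cayley biholomorphism, or alternatively the coboundary argument on the K\"ahler potential with automorphy factor $cv+d$ --- is different but equally valid, and arguably more economical since it reuses the disk computation instead of redoing it on the half-plane; what it buys is that one never needs to compute $B_1$ explicitly, at the price of first checking that $\Phi$ intertwines the two actions. One cosmetic point: your base computation gives $\frac{2k}{(1-|w|^2)^2}{\rm d} w\wedge{\rm d}\bar w=-\frac{2k}{(\bar v-v)^2}\,{\rm d} v\wedge{\rm d}\bar v$, which is the positive form since $(\bar v-v)^2<0$; the first summand of \eqref{BFR} as printed omits the differentials (its sign is fixed by comparison with \eqref{METRS1}), so your flagged care with the orientation of ${\rm d} v\wedge{\rm d}\bar v$ points to a typographical defect of the statement rather than to a gap in your argument.
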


We recall that in Proposition \ref{PRFC} the parameters $k$ and $\mu$ come from representation theory of the Jacobi group: $k$ indexes the positive discrete series of ${\rm SU}(1,1)$ ($2k\in\N$), while $\mu>0$ indexes the representations of the Heisenberg group. Note that in the Berndt--K\"ahler approach the K\"ahler potential~\eqref{POT} is just ``guessed'', see Comment~\ref{CM1}.

Here we just verify the invariance of the K\"ahler two-form \eqref{BFR} to the action \eqref{TSLL}, see also Lemma~\ref{LEMN}. We use equations~\eqref{TR11}
\begin{gather}\label{TR11}
{\rm d} v_1=\frac{{\rm d}
 v}{\Lambda^2},\qquad v_1-\bar{v}_1=\frac{v-\bar{v}}{|\Lambda|^2}, \qquad \text{where} \qquad
\Lambda=cv+d,\end{gather}
and the particular case $n=1$ of equations in \cite[p.~17]{nou}
\begin{gather*}
{\rm d} u_1=\frac{{\rm d} u}{\Lambda}+\frac{nd -c(u+m)}{\Lambda^2}{\rm d} v, \qquad B_1=\frac{B}{\Lambda},
\end{gather*}
where $B$ is given in \eqref{BFR}.

\section[The Heisenberg subgroup of ${\rm Sp}(2,\R)$]{The Heisenberg subgroup of $\boldsymbol{{\rm Sp}(2,\R)}$}\label{HS1}

The composition law of the 3-dimensional Heisenberg group $H_1$ in \eqref{SP2R} is:
\begin{gather*}
(\lambda,\mu,\kappa)(\lambda',\mu',\kappa')=(\lambda+\lambda',\mu+\mu',\kappa+\kappa'+\lambda\mu'-\lambda'\mu).
\end{gather*}

We denote an element of $H_1$ embedded in ${\rm Sp}(2,\R)$ as in~\eqref{SP2R} with $M=\mathbbm{1}_2$
\begin{gather}\label{Real2} H_1\ni g = \left(\begin{matrix}
 1& 0& 0 & \mu\\
\lambda &1& \mu & \kappa\\
0 & 0& 1 & -\lambda\\
 0& 0& 0& 1
\end{matrix}\right),\qquad g^{-1} = \left(\begin{matrix} 1& 0& 0& -\mu\\
-\lambda &1& -\mu & -\kappa\\
0 & 0& 1 & \lambda\\
 0& 0& 0& 1\end{matrix}\right).\end{gather}

A base of the Lie algebra $\got{h}_1=\langle P,Q,R\rangle_{\R}$ of the
Heisenberg group $H_1$ in the realization~\eqref{Real2} in the space $M(4,\R)$ consists of the matrices
\begin{gather*}
P=\left(\begin{matrix}
0&0&0&0\\
1&0&0&0\\
0&0&0&-1\\
0&0&0&0\end{matrix}\right),
 \qquad Q=\left(\begin{matrix}
0&0&0&1\\
0&0&1&0\\
0&0&0&0\\
0&0&0&0\end{matrix}\right),\qquad R=\left(\begin{matrix}
0&0&0&0\\
0&0&0&1\\
0&0&0&0\\
0&0&0&0\end{matrix}\right),
\end{gather*}
which verify the commutation relations
\begin{gather}
\label{PQT1}[P,Q]=2R,\qquad [P,R]=[Q,R]=0.
\end{gather}

If we write \[H_1\ni g(\lambda,\mu,\kappa)=\mathbbm{1}_4+\lambda P+\mu Q+\kappa R,\] then, using the formulas, see details in Appendix~\ref{NR111},
\begin{gather*}
g^{-1}{\rm d} g=P\lambda^p+Q\lambda^q+R\lambda^r,\qquad {\rm d} g g^{-1}=P\rho^p+Q\rho^q+R\rho^r,\end{gather*}
we find the left-invariant one-forms (vector fields)
\begin{gather}\label{LEFT1}
 \begin{cases}
\lambda^p = {\rm d} \lambda,\\
\lambda^q = {\rm d} {\mu},\\
\lambda^r = {\rm d} {\kappa}- \lambda{\rm d} {\mu} +{\mu}{\rm d} \lambda, \end{cases}
\qquad \begin{cases} L^p=\pa _{\lambda}
 -{\mu}\pa_ {\kappa},\\
 L^q= \pa_{\mu}+\lambda \pa_{\kappa},\\ L^r=\pa_{\kappa},
\end{cases}\end{gather}
and the right-invariant one-forms (respectively vector fields)
\begin{gather*}
 \begin{cases}
\rho^p = {\rm d} {\lambda},\\
\rho^q = {\rm d} {\mu},\\
\rho^r = {\rm d} {\kappa}-{\mu}{\rm d} {\lambda} +{\lambda}{\rm d} {\mu}, \end{cases}
\qquad \begin{cases} R^p=\pa _{\lambda}+{\mu}\pa_{\kappa},\\
R^q=\pa_{\mu}-{\lambda}\pa_{\kappa},\\ R^r=\pa_{\kappa}.\end{cases}
\end{gather*}
We have the commutation relations
\[[L^p,L^q]=2L^r,\qquad [R^p,R^q]=-2R^r. \]
We get the right (respectively left) invariant metric on $H_1$
\begin{subequations}\label{MTRSINV}
\begin{gather}
g^R_{H_1}({\lambda},{\mu},{\kappa}) =(\rho^p)^2+ (\rho^q)^2+(\rho^r)^2={\rm d} {\lambda}^2+{\rm d} {\mu}^2 +({\rm d} {\kappa}-{\mu}{\rm d} {\lambda}
 +{\lambda}{\rm d} {\mu})^2,\\
 g^L_{H_1}({\lambda},{\mu},{\kappa}) =(\lambda^p)^2+
(\lambda^q)^2+(\lambda^r)^2= {\rm d} {\lambda}^2+{\rm d} {\mu}^2 +({\rm d} {\kappa}-{\lambda}{\rm d} {\mu} +{\mu}{\rm d} {\lambda})^2.
\end{gather}
\end{subequations}

The left (right) invariant action of $H_1$ on itself is given by
\begin{subequations}\label{actPQR}
\begin{gather}\exp
 ({\lambda}P+{\mu}Q+{\kappa}R)({\lambda}_0,{\mu}_0,{\kappa}_0) = ({\lambda}+{\lambda}_0,{\mu}+{\mu}_0,{\kappa}+{\kappa}_0+{\lambda}{\mu}_0-{\mu}{\lambda}_0),\label{actPQRL}\\
({\lambda}_0,{\mu}_0,{\kappa}_0)\exp({\lambda}P+{\mu}Q+{\kappa}R) = ({\lambda}+{\lambda}_0,{\mu}+{\mu}_0,{\kappa}+{\kappa}_0+{\lambda}_0{\mu}-{\mu}_0{\lambda}).\label{actPQRR}
\end{gather}\end{subequations}
With \eqref{actPQR}, we calculate the fundamental vector fields
\[P^*=\pa_{\lambda}+{\mu}\pa_{\kappa},\qquad Q^*=\pa_{\mu}-\lambda\pa_{\kappa}, \qquad R^*=\pa_{\kappa}.\]

\section[The ${\rm SL}(2,\R)$ subgroup of ${\rm Sp}(2,\R)$]{The $\boldsymbol{{\rm SL}(2,\R)}$ subgroup of $\boldsymbol{{\rm Sp}(2,\R)}$}\label{SLSR}

 An element $M\in \SL$ is realized as an element in ${\rm Sp}(2,\R)$ by the relation
\begin{gather}\label{ALOS}
M=
\left(\begin{matrix}a&b\\c&d\end{matrix}\right) \rightarrow
g = \left( \begin{matrix} a& 0&b
 &0\\0&1&0&0\\c&0&d&0\\0&0&0&1\end{matrix} \right)\!\in G^J_1(\R),
\qquad g^{-1} = \left(\begin{matrix} d& 0&-b
 &0\\0&1&0&0\\-c&0&a&0\\0&0&0&1\end{matrix}\right). \end{gather}
A basis of the Lie algebra $\got{sl}(2,\R)=\langle F,G,H\rangle_{\R}$ consists of the matrices in $M(4,\R)$
\begin{gather*}
F= \left(\begin{matrix} 0& 0&1
 &0\\0&0&0&0\\0&0&0&0\\0&0&0&0\end{matrix}\right), \qquad G= \left(\begin{matrix} 0& 0&0
 &0\\0&0&0&0\\1&0&0&0\\0&0&0&0\end{matrix}\right),\qquad H= \left(\begin{matrix} 1& 0&0
 &0\\0&0&0&0\\0&0&-1&0\\0&0&0&0\end{matrix}\right) .\end{gather*}
$F$, $G$, $H$ verify the commutation relations \eqref{FGHCOM}. With the representation \eqref{ALOS}, we have
\begin{gather*}
g^{-1}{\rm d} g=F\lambda^f+G\lambda^g+H\lambda^h,\qquad {\rm d} g g^{-1}=F\rho^f+G\rho^g+H\rho^h.\end{gather*}
Using the parameterization \eqref{ALOS} for $\SL$, we find
\begin{gather}\label{LEFTRIGHT}
 \begin{cases}
\lambda^f = d{\rm d} b-b{\rm d} d,\\
\lambda^g = -c {\rm d} a + a{\rm d} c,\\
\lambda^h = d {\rm d} a - b {\rm d} c=c{\rm d} b-a {\rm d} d,\end{cases}
\qquad \begin{cases} \rho^f=-b {\rm d} a +a{\rm d} b,\\
 \rho^g= d {\rm d} c - c{\rm d} d,\\ \rho^h=d {\rm d} a- c{\rm d} b.
\end{cases} \end{gather}

We use the notation of \cite[Section~1.4]{bs}. The Iwasawa decomposition $M=NAK$ of an element~$M$ as in~\eqref{ALOS} reads
\begin{gather}\label{MNAK}M=\left(\begin{matrix}1&x\\0&
 1\end{matrix}\right)
\left(\begin{matrix}y^{\frac{1}{2}}&
 0\\0& y^{-\frac{1}{2}}
 \end{matrix}\right)
\left(\begin{matrix}
\cos\theta &\sin\theta\\-\sin\theta
 &\cos\theta \end{matrix}\right),\qquad y>0.\end{gather}
Comparing \eqref{MNAK} with \eqref{ALOS}, we find
\begin{subequations}\label{SCXYT}
\begin{gather}
a = y^{1/2}\cos\theta - xy^{-1/2}\sin\theta,\qquad b = y^{1/2}\sin\theta + xy^{-1/2}\cos\theta,\\
c = -y^{-1/2}\sin\theta,\qquad d = y^{-1/2}\cos\theta,
\end{gather}
\end{subequations}
and
\begin{gather}\label{SCINV}
x=\frac{ac+bd}{d^2+c^2}, \qquad y=\frac{1}{d^2+c^2},\qquad \sin\theta =-\frac{c}{\sqrt{c^2+d^2}},\qquad \cos\theta =\frac{d}{\sqrt{c^2+d^2}}.
\end{gather}

From \eqref{SCXYT}, we get the differentials
\begin{subequations}\label{SCXYTD}
\begin{gather}
{\rm d} a = -y^{-\frac{1}{2}}\sin\theta {\rm d} x +\frac{1}{2y^{\frac{1}{2}}}\left(\cos\theta+
 \frac{x}{y}\sin\theta\right){\rm d} y-y^{\frac{1}{2}}\left(\sin\theta+\frac{x}{y}\cos\theta\right){\rm d}\theta,\\
{\rm d} b = y^{-\frac{1}{2}}\cos\theta {\rm d} x +\frac{1}{2y^{\frac{1}{2}}}\left(\sin\theta-
 \frac{x}{y}\cos\theta\right){\rm d} y+y^{\frac{1}{2}}\left(\cos\theta -\frac{x}{y}\sin\theta\right){\rm d}\theta,\\
{\rm d} c = y^{-1/2}\left(\frac{\sin\theta}{2y}{\rm d} y-\cos\theta{\rm d}\theta\right),\qquad
{\rm d} d = -y^{-1/2}\left(\frac{\cos\theta}{2y}{\rm d} y +\sin\theta{\rm d} \theta\right).
\end{gather}
\end{subequations}

Let $M,M',M_1\in\SL$ such that $M M'=M_1$, matrices of
the form \eqref{ALOS}. With \eqref{SCXYT} we calculate the explicit action
of $M\in\SL$ on $(x',y',\theta')$
\begin{subequations}\label{ALIGNN}\begin{gather} x_1+\ii y_1 =\frac{(ax'+b)(cx'+d)+ac y'^2+\ii
 y'}{\Lambda}, \qquad \Lambda=(cx'+d)^2+(cy')^2,\label{ALIGNN1}\\
\sin\theta_1 =\frac{(cx'+d)\sin\theta' - cy'\cos\theta' }{\sqrt{\Lambda}},\qquad \cos\theta_1
=\frac{cy'\sin\theta'+(cx'+d)\cos \theta'}{\sqrt{\Lambda}}\label{ALLIGN2}.
\end{gather}
\end{subequations}
With \eqref{ALIGNN}, we find out
\begin{gather*}
\frac{{\rm d} x_1}{y_1} =\frac{[(cx'+d)^2-(cy')^2]{\rm d} x'+ 2cy'(cx'+d){\rm d} y'}{y'\Lambda},\\
\frac{{\rm d} y_1}{y_1} = \frac{- 2cy'(cx'+d){\rm d} x'+[(cx'+d)^2-(cy')^2]{\rm d} y'}{y'\Lambda},\\
{\rm d} \theta_1 ={\rm d} \theta' +c\frac{cy'{\rm d} x'-(cx'+d){\rm d} y'}{\Lambda}.
\end{gather*}
Introducing \eqref{SCXYT} and \eqref{SCXYTD} into \eqref{LEFTRIGHT}, we find the left
(right)-invariant one-forms $\lambda$-s with respect to the action $M \cdot (x',y',\theta')= (x_1,y_1,\theta_1)$, $M\in\SL$ and
$(x_1,y_1,\theta_1)$ given by~\eqref{ALIGNN}
 (respectively $\rho$-s), 
\begin{gather}\label{LEFTRIGHT2}
 \begin{cases}
\lambda^f = \dfrac{{\rm d} x}{y}\cos^2\theta+\dfrac{{\rm d} y}{2y}\sin 2\theta +{\rm d}\theta,\vspace{1mm}\\
\lambda^g = -\dfrac{{\rm d} x}{y}\sin^2\theta + \dfrac{{\rm d} y}{2y}\sin 2\theta -{\rm d} \theta,\vspace{1mm}\\
\lambda^h = - \dfrac{{\rm d} x}{2y}\sin 2\theta+\dfrac{{\rm d} y}{2y}\cos 2\theta,\end{cases}
\qquad \begin{cases} \rho^f={\rm d} x
 -\dfrac{x}{y}{\rm d} y +\dfrac{x^2+y^2}{y}{\rm d} \theta,\vspace{1mm}\\
 \rho^g=-\dfrac{{\rm d} \theta}{y},\vspace{1mm}\\
 \rho^h= \dfrac{{\rm d} y}{2y}- \dfrac{x}{y}{\rm d} \theta .
\end{cases}\end{gather}
We determine the left-invariant vector fields $L^f$, $L^g$, $L^h$ on $\SL$, dual orthogonal to the left-invariant one-forms $\lambda^f$, $\lambda^g$, $\lambda^h$~\eqref{LEFTRIGHT2}
\begin{subequations}\label{MNOP}
\begin{gather}
L^f =y\cos2\theta\frac{\pa}{\pa x}+ y\sin 2\theta\frac{\pa}{\pa y}+\sin^2\theta\frac{\pa}{\pa\theta},\\
L^g = y\cos 2\theta\frac{\pa}{\pa x}+y\sin 2\theta\frac{\pa}{\pa y}-\cos^2\theta\frac{\pa}{\pa \theta},\\
L^h = -2y\sin 2\theta\frac{\pa}{\pa x}+2y \cos 2\theta\frac{\pa}{\pa y}+\sin 2\theta\frac{\pa}{\pa\theta},
\end{gather}
\end{subequations}
which verify the commutation relations \eqref{FGHCOM} of the generators $F$, $G$, $H$ of the Lie algebra $\got{sl}(2,\R)$. In fact, $L^f$, $L^g$, $L^h$ in~\eqref{MNOP}
give the Lie derivative of the matrices $F$, $G$, respectively~$H$, see, e.g., \cite[p.~114]{LANG}.

From \eqref{LEFTRIGHT2}, we also get
\begin{gather*}
\lambda^f +\lambda^g =\frac{1}{y}(\cos 2\theta{\rm d} x+ \sin2\theta {\rm d} y), \\
2\lambda^h =\frac{1}{y}(-\sin 2 \theta {\rm d} x + \cos 2 \theta{\rm d} y ), \\
\lambda^f- \lambda^g = \frac{{\rm d} x}{y} + 2{\rm d} \theta,
 \end{gather*}
and
\begin{gather*}
\frac{1}{2}\big(L^f+L^g\big) = y\cos 2\theta\frac{\pa}{\pa x}+ y\sin 2\theta\frac{\pa}{\pa y} -\frac{1}{2}\cos 2\theta \frac{\pa}{\pa \theta},\\
\frac{1}{2}\big(L^f-L^g\big) = \frac{1}{2}\frac{\pa}{\pa\theta},\\
\frac{1}{2}L^h = -y\sin 2\theta \frac{\pa}{\pa x}+y\cos 2\theta \frac{\pa}{\pa y}+\frac{1}{2}\sin 2\theta \frac{\pa}{\pa \theta}.
\end{gather*}
Taking $\alpha, \beta>0$, let us introduce the left invariant one-forms
\begin{subequations}\label{1l2l3l}
\begin{gather}
\lambda_1 =\sqrt{\alpha}\big(\lambda^f+\lambda^g\big)=\frac{\sqrt{\alpha}}{y}(\cos 2\theta{\rm d} x+ \sin 2\theta {\rm d} y), \\
\lambda_2 =2\sqrt{\alpha}\lambda^h=\frac{\sqrt{\alpha}}{y}(-\sin 2\theta {\rm d} x+\cos 2\theta {\rm d} y),\\
\lambda_3 =\sqrt{\beta}\big(\lambda^f-\lambda^g\big)=\sqrt{\beta} (\frac{{\rm d} x}{y} + 2{\rm d} \theta). \label{3ll}
\end{gather}
\end{subequations}
The parameters $\alpha$, $\beta$ introduced in \eqref{1l2l3l} will appear in the invariant metrics~\eqref{GGG} on ${\rm SL}(2,\R)$ and~\eqref{MTRTOT} on $G^J_1(\R)$, while only $\alpha$ will appear in the invariant metrics~\eqref{begG} on~$\mc{X}^J_1$ and~\eqref{linvG} on~$\tilde{\mc{X}}^J_1$.

Note that in the commutation relations of the generators
$e_1$, $e_2$, $e_3$ of the Lie algebra $\got{sl}(2,\R)$
\[[e_1,e_2]=-4\frac{\alpha}{\sqrt{\beta}}e_3,\qquad [e_2,e_3]=4\sqrt{\beta}e_1,\qquad [e_3,e_1]= \sqrt{\beta}e_2,\]
where \[e_1=\sqrt{\alpha}(F+G),\qquad e_2=2\sqrt{\alpha}H,\qquad e_3=\sqrt{\beta}(F-G),\]
there are 2 positive structure constants and one negative, as in the scheme of classification of three-dimensional unimodular Lie groups, see \cite[p.~307]{ml}.

We determine the left-invariant vector fields $L^j$ such that $\langle \lambda_i\,|\,L^j\rangle =\delta_{ij}$, $i,j=1,2,3$,
\begin{subequations}\label{1L2L3L}
\begin{gather}
L^1 =\frac{1}{2\sqrt{\alpha}}\big(L^f+L^g\big) =\frac{1}{\sqrt{\alpha}}\left( y\cos 2\theta\frac{\pa}{\pa x}+ y\sin 2\theta\frac{\pa}{\pa y} -\frac{1}{2}\cos 2\theta \frac{\pa}{\pa \theta}\right),\\
L^2 =\frac{1}{2\sqrt{\alpha}}L^h= \frac{1}{\sqrt{\alpha}}\left( -y\sin 2\theta \frac{\pa}{\pa x}+y\cos 2\theta \frac{\pa}{\pa y}+\frac{1}{2}\sin 2\theta \frac{\pa}{\pa \theta}\right),\\
L^3 =\frac{1}{2\sqrt{\beta}}\big(L^f-L^g\big) = \frac{1}{2\sqrt{\beta}}\frac{\pa}{\pa \theta}.\label{3LL}
\end{gather}
\end{subequations}
If we take in \eqref{1L2L3L} the limit $\theta\rightarrow 0$, we project the invariant vector fields of $\SL$ on the Siegel half-plane $\mc{X}_1=\big\{(x,y)\in\R^2\,|\,y>0\big\}$, and we recover the invariant vector fields which appear in Theorem~\ref{THM0}(2) equation~\eqref{GBELTR}
\begin{gather}\label{E1E2}
l^1_0=\frac{y}{\sqrt{\alpha}}\frac{\pa}{\pa x},\qquad l^2_0=\frac{y}{\sqrt{\alpha}}\frac{\pa}{\pa y}.
\end{gather}

Now we calculate the fundamental vector fields $f^*$, $g^*$, $h^*$ of manifold $\SL$ attached to the base $F$, $G$, respectively~$H$, invariant to the action
$(x,y,\theta)\cdot(x',y',\theta')=(x_1,y_1,z_1)$ given by the composition law $MM'=M_1$, applying~\eqref{expFGH}, \eqref{SCXYT}, \eqref{SCINV}:
\begin{subequations}\label{NUST1}
\begin{gather}
f^* =F^*_1=\frac{\pa}{\pa x},\\
g^* =G^*_1-y\frac{\pa}{\pa \theta}= \big(y^2-x^2\big)\frac{\pa}{\pa x}- 2xy\frac{\pa}{\pa y}-y\frac{\pa}{\pa \theta},\\
h^* =H^*_1=2\left(x\frac{\pa}{\pa x}+y\frac{\pa}{\pa y}\right),
\end{gather}
\end{subequations}
where we have denoted with a subindex 1 the fundamental vector fields~\eqref{FUNDFGH} of~$\SL$, corresponding to the action of the
group on the Siegel upper half-plane $\mc{X}_1$. In fact, $F^*_1$, $G^*_1$, $H^*_1$ are $\db{F}$, $\db{G}$, $\db{H}$ in the convention of Section~\ref{section1}. Evidently, the vector fields~\eqref{NUST1} verify the same commutation relations as $F$, $G$, $H$, with a minus sign.

Using \eqref{expFGH2} or directly with \eqref{NUST1}, we calculate the fundamental vector fields $v^*$, $h1^*$, $w^*$ of $\SL$ corresponding to
\begin{subequations}\label{NASOL}
\begin{gather} v=\sqrt{\alpha}(F+G),\qquad h1=2\sqrt{\alpha}H, \qquad w=\sqrt{\beta}(F-G),\nonumber\\
v^* =\sqrt{\alpha}\left[\big(1-x^2+y^2\big)\frac{\pa}{\pa x}-2xy\frac{\pa}{\pa y}-y\frac{\pa}{\pa\theta}\right],\\
h1^* = 4\sqrt{\alpha}\left(x\frac{\pa}{\pa x}+y\frac{\pa}{\pa y}\right),\\
w^* = \sqrt{\beta}\left[\big(1+x^2-y^2\big)\frac{\pa}{\pa x}+2xy\frac{\pa}{\pa y}+y\frac{\pa}{\pa \theta}\right].
\end{gather}
\end{subequations}

Now we consider $\SL$ as a contact manifold in the meaning of Definition~\ref{DFF11}. Firstly we define an almost contact
structure $(\Phi,\xi,\eta)$ as in Definition~\ref{D9}. We take $\eta=\lambda_3$ and $\xi=L^3$, verifying~\eqref{ACM1}. We have ${\rm d} \eta={\rm d} \lambda_3= \sqrt{\beta}\frac{{\rm d} x\wedge {\rm d} y}{y^2}$, and the condition \eqref{CNT} (with $n=1$) that $\eta$ be a contact
form is verified. The only nonzero component of the associated
two form $\hat{\Phi}$ in \eqref{DFI} is $\hat{\Phi}_{xy}=\frac{\sqrt{\beta}}{2y^2}$, i.e.,
\begin{gather}\label{831}
\hat{\Phi}=\left(\begin{matrix}\hat{\Phi}_{xx}&\hat{\Phi}_{xy}&
 \hat{\Phi}_{x\theta}\\
\hat{\Phi}_{yx}&\hat{\Phi}_{yy}&\hat{\Phi}_{y\theta}\\
\hat{\Phi}_{\theta x}& \hat{\Phi}_{\theta y}&
 \hat{\Phi}_{\theta\theta}\end{matrix}\right)=
\left(\begin{matrix} 0 &\dfrac{\sqrt{\beta}}{2y^2} & 0\\
-\dfrac{\sqrt{\beta}}{2y^2} & 0 & 0\\
0& 0& 0
\end{matrix}\right).
\end{gather} From \eqref{2PHI}
applied to the metric matrix \eqref{GGG} of $\SL$, we get
\[\Phi=\left(\begin{matrix} \Phi^x_x & \Phi^x_y &
 \Phi^x_{\theta}\\
\Phi^y_x& \Phi^y_y & \Phi^y_{\theta}\\
\Phi^{\theta}_x&\Phi^{\theta}_y&
 \Phi^{\theta}_{\theta}\end{matrix}\right)=
\left(\begin{matrix} 0& \dfrac{\sqrt{\beta}}{2\alpha} &0\\
-\dfrac{\sqrt{\beta}}{2\alpha}& 0& 0\\0 & -\dfrac{\sqrt{\beta}}{4\alpha
 y}& 0\end{matrix}\right) .\]

It is convenient to work with the matrix
\begin{gather}\label{PHIP}
\Phi'=\frac{2\alpha}{\sqrt{\beta}}\Phi=\left(\begin{matrix} 0 & 1 &
 0
 \\ -1 & 0 & 0\\
 0 &-\frac{1}{2y} &0\end{matrix}\right) .
\end{gather}
With $(\eta,\xi,\Phi)$ chosen as $\big(\lambda_3,L^3,\Phi'\big)$, equation~\eqref{ACM2} and the conditions of Theorem~\ref{THM11} for an almost contact structure for the manifold $\SL$, where $\operatorname{Rank}(\Phi) =2$, are verified.

We have \begin{gather}\label{V1V2}
\text{Ker}(\eta)=\langle V_1,V_2\rangle =\left\langle \frac{\pa}{\pa
 x}-\frac{1}{2y}\frac{\pa}{\pa \theta},\frac{\pa}{\pa y}\right\rangle ,\end{gather}
and we can write the $(1,1)$-tensor $\Phi'$~\eqref{DEFACD} as
\[\Phi'=-V_2\otimes {\rm d} x+V_1\otimes {\rm d} y.\]
If $X=A\frac{\pa}{\pa x}+B\frac{\pa}{\pa y}+C\frac{\pa }{\pa \theta}$, then $\Phi' X= BV_1-AV_2$, and the contact distribution $\mc{D}=\langle V_1,V_2\rangle $ verifies the condition of Remark~\ref{REM11}.

We also observe that $\SL$ is a homogeneous contact manifold in the sense of Definition~\ref{BOO}.

Now we construct the 4-dimensional symplectization $(C(\SL), \omega,\bar{g})$ of $\SL$,
where \begin{gather}\label{BARG}
\bar{g}(r,x,y,\theta)={\rm d} r^2+r^2g_{{\rm SL}(2,\R)}(x,y,\theta),\qquad \omega= {\rm d} \big(r^2\lambda_3\big).
\end{gather}
In order to see that the Riemann cone $(C(\SL),
\omega,\bar{g})$ of the manifold $\SL$ is normal in the
sense of Definition~\ref{DF17}, we calculate the components
\eqref{TENN} of the $(1,2)$-tensor $N^1$~\eqref{NN1}, using equations~\eqref{831} and \eqref{PHIP}. Because the tensor~\eqref{TENC} is antisymmetric in the lower indexes~$i$,~$j$, we have to calculate only the 9 components $\big(N^1\big)^i_{x,y}$, $\big(N^1\big)^i_{x,\theta}$, $\big(N^1\big)^i_{y,\theta}$, $i=x,y,\theta$, which were found to be 0. In accord with Definition~\ref{DF17}, the Riemann cone $(C(\SL),\omega,\bar{g})$ is Sasaki, and, in accord with Theorem~\ref{THM14}, it is a K\"ahler manifold.

It can be verified that the vector $\xi=L^3$ is a Killing vector for the metric \eqref{MTRSL}, and $\SL$ has a K-contact
 structure, in the sense of Definition~\ref{DF17}. In fact, with
 Remark~\ref{RMM1}, it is verified that $\frac{\pa}{\pa x}$ and
$\frac{\pa }{\pa \theta}$ are Killing vectors for the metric
\eqref{GGG} below, because none of the coordinates~$x$ and~$\theta$
appear explicitly in~\eqref{MTRSL}. For completness, if
$X=X^1\frac{\pa}{\pa x}+X^2\frac{\pa }{\pa y}+X^3\frac{\pa }{\pa \theta}$
 then the equations~\eqref{LG1} of the Killing vectors in the case of the homogeneous metric \eqref{GGG} are
\begin{subequations}\label{KILSL}
\begin{gather}
 -2(\alpha+\beta)X^2+2(\alpha+\beta)y\pa_xX^1+4\beta y^2\pa_xX^3=0,\\
 \alpha\pa_xX^2+(\alpha+\beta)\pa_yX^1+2\beta y\pa_yX^3 =0,\\
 -2\beta X^2+2\beta y \pa_xX^1+(\alpha+\beta)\pa_{\theta}X^1+
2\beta y \pa_{\theta}X^3=0,\\
 -X^2+y\pa_yX^2=0,\\
 2\beta y\pa_{y}X^1+4\beta y^2\pa_yX^3+\alpha\pa_{\theta}X^2=0,\\
 \beta\pa_{\theta}X^1+2\beta y\pa_{\theta}X^3+\pa_{\theta}X^1+2\beta y \pa_{\theta}X^3=0.
\end{gather}
\end{subequations}

In fact, we have
\begin{Proposition}\label{X15}The metric on the group $\SL$, invariant to the
action \eqref{ALIGNN}, is
\begin{gather}{\rm d} s^2_{{\rm SL}(2,\R)} (x,y,\theta) =\lambda_1^2+\lambda_2^2+\lambda_3^2 =\alpha\frac{{\rm d} x^2+{\rm d} y^2}{y^2}+\beta\left(\frac{{\rm d}
 x}{y}+2{\rm d}\theta\right)^2\nonumber\\
 \hphantom{{\rm d} s^2_{{\rm SL}(2,\R)} (x,y,\theta)}{} = \frac{(\alpha+\beta){\rm d} x^2+\alpha {\rm d}
 y^2}{y^2}+4\beta{\rm d} \theta^2 +4\frac{\beta}{y}{\rm d} x{\rm d} \theta.\label{MTRSL} \end{gather}
The matrix associated with the metric \eqref{MTRSL} is
\begin{gather}g_{{\rm SL}(2,\R)} (x,y,\theta)=\left(\begin{matrix}g_{xx}&
 0 & g_{x\theta}\\0& g_{yy} & 0\\ g_{\theta x} & 0 &
 g_{\theta\theta}\end{matrix}\right),\nonumber\\
 g_{xx}=\frac{\alpha+\beta} {y^2}, \qquad g_{yy}=\frac{\alpha}{y^2},\qquad g_{\theta\theta}= 4\beta, \qquad g_{x\theta}=\frac{2\beta}{y} .\label{GGG}
\end{gather}
The invariant vector fields $L^1$, $L^2$, $L^3$ given by \eqref{1L2L3L} are orthonormal with respect to the metric~\eqref{MTRSL}.
The Killing vector fields associated to the metric~\eqref{MTRSL}, solutions of the equations~\eqref{KILSL}, are given by \eqref{NASOL}.

$\big(L^3,\lambda_3,\Phi'\big)$ defines an almost contact structure on~$\SL$, where $L^3$, $\lambda_3$, $\Phi'$ are given respectively by \eqref{3LL}, \eqref{3ll}, \eqref{PHIP}. $\lambda_3$ is the contact structure for $\SL$, $L^3$ is the Reeb vector and the contact distribution $\mc{D}$ is given by~\eqref{V1V2}. $\big(\SL (x,y,\theta),\mc{X}_1,{\rm d} s^2_{\mc{D}}\big)$ is a sub-Riemannian manifold and
 \[{\rm d} s^2_{{\rm SL}(2,\R)}(x,y,\theta)={\rm d} s^2_{\mc{X}_1}+\lambda^2_3,\]
where ${\rm d} s^2_{\mc{X}_1}$ is the $($Beltrami$)$ K\"ahler metric~\eqref{GBELTR},
\eqref{GBETR1}, \begin{gather}\label{MBER}
{\rm d} s^2_{\mc{X}_1}= \lambda^2_1+\lambda_2^2=\alpha\frac{{\rm d}
 x^2+{\rm d} y^2}{y^2}\end{gather} on
the Siegel upper half-plane $x,y,\in\R,~y>0$. The invariant vector fields $l^1_0$, $l^2_0$ given by \eqref{E1E2} are orthonormal with respect to the metric~\eqref{MBER}.

The manifold $\SL$ admits the homogenous contact metric structure $\big(\lambda_3,L^3,\Phi',g_{{\rm SL}(2,\R)}\big)$. The group $\SL$ has the K-contact structure associated with $\xi=L^3$, and it is a Sasaki manifold with the Riemann cone $(C(\SL),\omega,\bar{g})$ with respect to the metric~\eqref{GGG}, where $\bar{g}$ and $\omega$ are given by~\eqref{BARG}.
\end{Proposition}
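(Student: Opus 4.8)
The plan is to assemble the Proposition from the invariant one-forms $\lambda_1,\lambda_2,\lambda_3$ already computed in \eqref{1l2l3l} and to verify the contact-geometric claims against the definitions invoked in the surrounding text. The metric formula \eqref{MTRSL} follows by a direct substitution: since $\lambda_1,\lambda_2,\lambda_3$ are left-invariant with respect to the action \eqref{ALIGNN}, any constant-coefficient quadratic form in them is automatically invariant, so it suffices to expand $\lambda_1^2+\lambda_2^2+\lambda_3^2$. First I would compute $\lambda_1^2+\lambda_2^2=\frac{\alpha}{y^2}\big(\cos^2 2\theta+\sin^2 2\theta\big)\big({\rm d} x^2+{\rm d} y^2\big)=\alpha\frac{{\rm d} x^2+{\rm d} y^2}{y^2}$, the cross terms in $2\theta$ cancelling; adding $\lambda_3^2=\beta\big(\frac{{\rm d} x}{y}+2{\rm d}\theta\big)^2$ and collecting coefficients of ${\rm d} x^2$, ${\rm d} y^2$, ${\rm d}\theta^2$, and ${\rm d} x\,{\rm d}\theta$ gives \eqref{MTRSL} and hence the matrix entries \eqref{GGG}. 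The orthonormality of $L^1,L^2,L^3$ then comes for free from the duality $\langle\lambda_i\,|\,L^j\rangle=\delta_{ij}$ established in \eqref{1L2L3L}, since in an orthonormal coframe the dual frame is orthonormal by construction.

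Next I would treat the Killing-field assertion. The fundamental vector fields \eqref{NASOL} arise from the right action, so they are Killing for any left-invariant metric; to match the statement I would either invoke this general fact or verify directly that $v^*,h1^*,w^*$ satisfy the system \eqref{KILSL}, using that ${\rm d} s^2_{\mc{X}_1}$ splits off as $\lambda_1^2+\lambda_2^2$. The splitting ${\rm d} s^2_{{\rm SL}(2,\R)}={\rm d} s^2_{\mc{X}_1}+\lambda_3^2$ with ${\rm d} s^2_{\mc{X}_1}=\alpha\frac{{\rm d} x^2+{\rm d} y^2}{y^2}$ is just the grouping above, and the orthonormality of $l^1_0,l^2_0$ from \eqref{E1E2} follows by the $\theta\to 0$ projection already noted before the Proposition.

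For the contact-geometric part I would simply certify that $(\lambda_3,L^3,\Phi')$ meets each defining condition. One checks $\langle\lambda_3\,|\,L^3\rangle=1$ from \eqref{ACM1}, that ${\rm d}\lambda_3=\sqrt{\beta}\frac{{\rm d} x\wedge{\rm d} y}{y^2}$ is nondegenerate on $\operatorname{Ker}\lambda_3$ so \eqref{CNT} holds, and that $\Phi'$ in \eqref{PHIP} has rank $2$ and annihilates $L^3$, giving the almost contact structure via Theorem~\ref{THM11}; the distribution $\mc{D}=\langle V_1,V_2\rangle$ is \eqref{V1V2}. That $L^3$ is Killing (hence the K-contact property) follows because neither $x$ nor $\theta$ appears in \eqref{GGG}, as noted with Remark~\ref{RMM1}. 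Finally, normality of the Riemann cone was already reduced to the vanishing of the nine components of $N^1$, which the text reports as $0$; by Definition~\ref{DF17} and Theorem~\ref{THM14} this promotes the K-contact structure to a Sasaki structure with K\"ahler cone $(C(\SL),\omega,\bar g)$ as in \eqref{BARG}. I expect the only genuinely laborious point to be the tensorial normality computation for $N^1$, but since the excerpt has already carried it out, the proof reduces to organizing these verifications; the main conceptual step is recognizing that the single expansion $\lambda_1^2+\lambda_2^2+\lambda_3^2$ simultaneously yields the metric, its signature, the orthonormal frame, and the sub-Riemannian splitting.
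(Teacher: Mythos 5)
Your proposal is correct and follows essentially the same route as the paper, whose ``proof'' of Proposition~\ref{X15} is distributed through the text preceding it: expanding $\lambda_1^2+\lambda_2^2+\lambda_3^2$ from \eqref{1l2l3l}, reading off orthonormality of $L^1,L^2,L^3$ from the duality $\langle \lambda_i\,|\,L^j\rangle=\delta_{ij}$, checking the contact axioms for $\big(\lambda_3,L^3,\Phi'\big)$ against Definitions~\ref{D9} and~\ref{DFF11}, and invoking the already-reported vanishing of the components of $N^1$ for normality and hence the Sasaki property. One small correction: the fields \eqref{NASOL} are fundamental vector fields of the \emph{left} action (hence right-invariant vector fields generating left translations), and that is why they preserve the left-invariant metric -- saying they ``arise from the right action'' gets the mechanism backwards, since left-invariant vector fields would generally not be Killing here; your fallback of verifying \eqref{KILSL} directly is the safe route and is what the paper implicitly relies on.
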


The last assertion in Proposition \eqref{X15} is well known, see \cite[Example~7]{ALB}.

Now we enumerate other invariant metrics on $\SL$ appearing in literature, different of the metric \eqref{MTRSL} in Proposition~\ref{X15}:

\begin{Comment}\label{COM2}An explicit invariant metric on $\SL$ in coordinates different of the coordinates $(x,y,\theta)$ appears in \cite[Theorem~2, p.~141]{kw3}, see also
Theorem~\ref{thm1}. A different form of the invariant metric on $\SL$ appears in the context of the BCV spaces, see our Remark~\ref{RM5}, where we have applied the Cayley transform to the metric appearing in Theorem~\ref{PR15}, reproduced after~\cite{JVAN} and \cite[Example~2.1.10, p.~59]{calin}. See also \cite[Proposition~2.2, p.~1072 and equation~(2.14)]{vp}.
\end{Comment}

We also give a direct proof of the some well-known facts, see (b2) in Theorem~\ref{thm1}.
\begin{Remark}\label{Rm1}The Siegel upper half-plane $\mc{X}_1$ admits a~realization as noncompact Hermitian symmetric space
\begin{gather}\label{MCEC}\mc{X}_1=\frac{{\rm SL}(2,\R)}{{\rm SO}(2)}\approx
\frac{{\rm SU}(1,1)}{{\rm U}(1)}.\end{gather}
$\mc{X}_1$ is a symmetric, naturally reductive space.
\end{Remark}
\begin{proof}We use the equivalence \eqref{ISO2}, but we look at the level of groups. We consider the case of ${\rm Sp}(n,\R)$. The group ${\rm Sp}(n,\K)$ is the group of matrices $M\in{\rm M}(2n,\K)$, where $\K$ is $\R$ or $\C$, for which
\begin{gather*}
\langle M\alpha,M\beta\rangle =\langle \alpha,\beta\rangle, \qquad \text{where} \qquad \alpha, \beta\in M(n,1,\K), \end{gather*}
and
\begin{gather*}
\langle \alpha,\beta\rangle :=\alpha^t J \beta, \qquad J= \left(\begin{matrix} 0 & \un\\ -\un &
 0 \end{matrix}\right), \end{gather*}
i.e., we have \eqref{SPNR}
\begin{gather}\label{SPNR}
{\rm Sp}(n,\R) = \left\{M = \left(\begin{matrix}a & b\\ c & d \end{matrix}\right) \in {\rm GL}(2n,\R)\,|\, M^tJM = J\right\},\qquad a,b,c,d\in\! M(n,\R).
\end{gather}

We can identify the complex linear group ${\rm GL}(n,\C)$ with the subgroup of matrices of ${\rm GL}(2n,\R)$ that commutes with~$J$, i.e., $a+\ii b\in{\rm GL}(n,\C)$ is identified with the real $2n\times 2n$ matrix~\eqref{ABBA}, see, e.g., \cite[p.~115]{kn}
\begin{gather}\label{ABBA}\left(\begin{matrix}a &b \\ -b & a\end{matrix}\right).\end{gather}

It is easy to prove, see, e.g., \cite{sieg,pedro}, that if $M\in{\rm Sp}(n,\R)$, then $M$ is similar with~$M^t$ and~$M^{-1}$. If $ M\in {\rm Sp}(n,\R)$ is as in \eqref{SPNR}, then the matrices $a,b,c,d\in M(n,\R)$ in \eqref{SPNR} verify the equivalent conditions
\begin{subequations}\label{simplecticR}
\begin{gather}
 ab^t- ba^t = 0,\qquad ad^t-bc^t =\un,\qquad cd^t-dc^t=0,\\
 a^tc-c^ta=0, \qquad a^td-c^tb=\un,\qquad b^td-d^tb=0.
\end{gather}
\end{subequations}
Note that the inverse of the matrix \eqref{SPNR} is given by \begin{gather}\label{MINV}M^{-1}=\left(\begin{matrix} d^t & -b^t\\ -c^t &
 a^t\end{matrix}\right) .\end{gather}
The matrices from ${\rm Sp}(n,\R)$ have the determinant
1.

Using the expression \eqref{MINV} it can be shown that the matrix
\[M\in{\rm Sp}(n,\R)\cap\mathrm{O}_{2n}\] has the expression~\eqref{SPNR} and
\begin{gather}\label{msimor}M=\left(\begin{matrix} a & b \\ -b & a\end{matrix}\right),
\qquad a^ta+ b^tb=\un,\qquad a^tb=b^ta.\end{gather}
If $M\in M(2n,\R)$ has the property~\eqref{msimor}, let
 \begin{gather}\label{MPR}
M':=a+\ii b\in M(n,\C),
\end{gather}
and the correspondence $M\rightarrow M'$ of~\eqref{msimor} with~\eqref{MPR}
 is a group
 isomorphism \[{\rm Sp}(n,\R)\cap\mathrm{O}_{2n}\approx{\rm U}(n).\]

We identify $\R^{2n}$ with $\C^n$ via the correspondence $\alpha =(p,q)\mapsto a$,
\begin{gather*}
a = \frac{p +\ii q}{\sqrt{2}}, \qquad p,q\in\R^n,
\end{gather*}
Following Bargmann \cite{bar70}, it is useful to introduce the transformation
\begin{gather*} \mc{W}\colon \ \R^{2n}\leftrightarrow \C^{2n}, \qquad \alpha= (p,q)\mapsto \alpha_{\C}=(a,\bar{a}),\\
\alpha_{\C}= \mc{W}\alpha, \qquad \mc{W}= 2^{-\frac{1}{2}}\left(\begin{matrix} \un & \ii \un\\ \un & -\ii \un\end{matrix} \right), \qquad \mc{W}^{-1}=
2^{-\frac{1}{2}}\left(\begin{matrix} \un & \un\\ -\ii \un & \ii \un\end{matrix} \right),
\end{gather*}
where
\[\alpha^t= \big(p^t,q^t\big),\qquad p^t=(p_1,\dots,p_n).\]

 To $M\in M(2n,\R)$ as in~\eqref{SPNR} we associate $M_{\C}\in M(2n,\C)$
\begin{gather}\label{cmpl}M_{\C}=\mc{W}M\mc{W}^{-1}=\frac{1}{2}\left(\begin{matrix}
 a+d +\ii (c-b)& a-d+\ii (b+c)\\
a-d- \ii (c+b) & a+d +\ii (b-c)\end{matrix}\right).
\end{gather}
If $\beta = R \alpha$, $R\in{\rm Sp}(n,\R)$ then
\begin{gather*}
\beta_{\C}=R_{\C}\alpha_{\C}, \qquad \text{where} \qquad R_{\C}=\mc{W}R\mc{W}^{-1}\in{\rm Sp}(n,\R)_{\C},
\end{gather*}
and $R_{\C}\in{\rm Sp}(n,\R)_{\C}={\rm Sp}(n,\C)\cap {\rm U}(n,n)$, cf.~\cite{bar70,fol,itzik}.

 From \eqref{simplecticR} we see that
$\SL \approx{\rm Sp}(1,\R)$. Next we apply \eqref{cmpl}
to $M=\left(\begin{smallmatrix}a
 &b\\c&d \end{smallmatrix}\right) \in\SL$, and we
 get $M_{\C}=\left(\begin{smallmatrix}\alpha&\beta\\\bar{\beta}&
 \bar{\alpha}\end{smallmatrix}\right)$,
 where $\alpha=\frac{1}{2}(a+d+\ii (c-b))$, $\beta=\frac{1}{2}(a-d+\ii(b+c))$, i.e., $M_{\C}\in{\rm SU}(1,1)$ because $|\alpha|^2-|\beta|^2=1$. In particular, if $a=\cos\theta$, $b=\sin\theta$, $c=-b$, $d=a$, then $\alpha =\e^{-\ii\theta}$, $\beta=0$, and $\eqref{MCEC}$ is proved.

Now we prove that $\mc{X}_1$ is a naturally reductive space verifying that the condition~\eqref{natred} is fulfilled. We take into account~\eqref{ISO2} and~\eqref{825a}. If we
consider $\got{sl}(2,\R)\ni \g=\got{m}+\got{h}$ for the homogeneous space $M=\mc{X}_1$
as in~\eqref{MCEC}, then $\got{h}=\langle H\rangle$, $\got{m}=\langle F,G\rangle=\langle F-G,F+G\rangle$, and the relation~\eqref{sum3}
$[\got{m},\got{h}]\subset\got{m}$ follows, i.e., $\mc{X}_1$ is reductive.

In order to verify the condition \eqref{natred}, we take $X=aL^1+bL^3$, $Y=a_1L^1+b_1L^3$, $Z=a_2L^1+b_2L^3$. We get $[X,Z]=(a_2b-b_2a)L^2\in\got{h}$ and $[X,Z]_{\got{m}}=0$. \eqref{natred} is trivially satisfied.
\end{proof}

We mention that naturally reductive left-invariant metrics on $\SL$ in the context of BCV-spaces have been investigated in~\cite{HALVA}.

\section[The Jacobi group $G^J_1(\R)$ embedded in ${\rm Sp}(2,\R)$]{The Jacobi group $\boldsymbol{G^J_1(\R)}$ embedded in $\boldsymbol{{\rm Sp}(2,\R)}$}\label{JG1}
\subsection{The composition law}

The real Jacobi group $G^J_1(\R)$ is the semi-direct product of the real three dimensional Heisenberg group $H_1$ with ${\rm SL}(2,\R)$. The Lie algebra of the Jacobi
group $G^J_1(\R)$ is given by $\got{g}^J_1(\R)=\langle P,Q,R,F,G,H\rangle_{\R}$, where the first three generators $P$, $Q$, $R$ of $\got{h}_1$ verify the commutation relations~\eqref{PQT1}, the generators $F$, $G$, $H$ of $\got{sl}(2,\R)$ verify the commutation relations~\eqref{FGHCOM} and the ideal $\got{h}_1$ in~$\got{sp}(2,\R)$ is determined by the non-zero commutation relations
\begin{gather}\label{MORCOM}
[P,F]=Q,\qquad [Q,G]=P,\qquad [P,H]=P,\qquad [H,Q]=Q.
\end{gather}

Let $G^J_1(\R)\ni g= (M,h)$, where $M$ is as in \eqref{ALOS}, while
$H_1\ni h=(X,\kappa)$, $X=(\lambda,\mu)\in\R^2$ and similarly for
$g'=(M',h')$.
 The composition law of $G^J_1(\R)$ is
\begin{gather}\label{COMPL}
gg'=g_1,\qquad
\text{where}\! \qquad M_1=MM',\qquad\! X_1=XM'+X', \qquad\! \kappa_1=\kappa+\kappa'+\left|\begin{matrix}XM'\\X'\end{matrix}\right|,\!\!
\end{gather}
where
\begin{gather*}
g_1 =\left(\begin{matrix}aa'+bc'& ab'+bd'\\ ca'+dc'&
 cb'+dd'\end{matrix}\right),\\
(\lambda_1,\mu_1) = (\lambda'+\lambda a'+\mu c',\mu'+\lambda b'+\mu d'),\qquad \kappa_1 = \kappa+\kappa'+\lambda q'-\mu p'.
\end{gather*}
The inverse element of $g\in G^J_1(\R)$ is given by \begin{gather}\label{INVV}
(M,X,\kappa)^{-1}=\big(M^{-1},-Y,-\kappa\big)\rightarrow~g^{-1}=\left(\begin{matrix}
 d& 0& -b&
 -\mu\\-p
 &1&-q
 &-\kappa\\-c&0&a&\lambda\\ 0&0&0&1\end{matrix}\right),
\end{gather}
where $Y$ was defined in \eqref{DEFY}.

Using the notation of \cite[p.~9]{bs}, the {\it EZ-coordinates}
(EZ~-- from Eichler and Zagier) of an element $g\in G^J_1(\R)$~\eqref{SP2R} are
$(x,y,\theta,\lambda,\mu,\kappa)$, where $M$ is related with
$(x,y,\theta)$ by~\eqref{SCXYT},~\eqref{SCINV}.

The {\it S-coordinates} (S~-- from Siegel) of $g=(M,h)\in G^J_1(\R)$ are $(x,y,\theta,p,q,\kappa)$, where $(x,y,\theta)$ are expressed as function of $M\in \SL$ by~\eqref{SCINV}.

\subsection{The action}
Let
\begin{gather}\label{TAUZ}
\tau:=x+\ii y,\qquad
 z:=p\tau+q=\xi+\ii \eta,\qquad \xi,\eta\in\R.\end{gather}
 Let
$\mc{X}^J_1\approx \mc{X}_1\times\R^2$ be the Siegel--Jacobi upper half-plane,
where $\mc{X}_1=\{\tau\in\C, \,y:=\Im \tau>0\}$ is the Siegel upper half-plane, and
$\tilde{\mc{X}}^J_1=\mc{X}^J_1\times\R$ denotes the extended Siegel--Jacobi upper half-plane. Simultaneously with
the Jacobi group $G^J_1(\R)$ consisting of elements $(M,X,\kappa)$, we consider the group $G^J(\R)_0$ of elements $(M,X)$. It should be mentioned that there is a group homomorphism $G^J_1(\R)\ni(M,X,\kappa)\mapsto (M,X)\in G^J(\R)_0$, through which
the action of $G^J_1(\R)$ on~$\mc{X}^J_1$ can be defined, see \cite[Proposition~2]{nou}. Then
\begin{Lemma}\label{LEMN} The action $G^J(\R)_0\times \mc{X}^J_1\rightarrow \mc{X}^J_1$
is given by \begin{gather}\label{AC1}
(M,X)\times (\tau',z')=(\tau_1,z_1), \qquad \text{where}\qquad \tau_1=\frac{a\tau'+b}{c\tau'+d},\qquad z_1=\frac{z'+\lambda\tau'+\mu}{c\tau'+d},\\
\label{AC11} (M,X)\times (x',y',p',q')=(x_1,y_1,p_1,q_1),
\end{gather}
where $z'=p'\tau'+q'$, $\tau'=x'+\ii y'$ as in \eqref{TAUZ},
\begin{gather}\label{AC12}
(p_1,q_1)=(p,q)+(p',q')
\left(\begin{matrix}a & b\\c & d\end{matrix}\right)^{-1}=(p+dp'-cq',q-bp'+aq'),\end{gather}
while $(x_1,y_1)$ are given by \eqref{ALIGNN1}.

The action $G^J_1(\R)\times \tilde{\mc{X}}^J_1\rightarrow
 \tilde{\mc{X}}^J_1$ is given by
\begin{gather}
(M,X,\kappa)\times (\tau',z',\kappa') =(\tau_1,z_1,\kappa_1),\nonumber\\
(M,X,\kappa)\times (x',y',p',q',\kappa') =(x_1,y_1,p_1,q_1,\kappa_1),\nonumber\\
\kappa_1 =\kappa +\kappa' +\lambda q'-\mu p',\qquad (p',q')= \left(\frac{\eta'}{y'},\xi'-\frac{x'}{y'}\eta'\right).\label{AC2}
\end{gather}
\end{Lemma}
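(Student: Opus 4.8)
The plan is to read off both actions directly from the composition law \eqref{COMPL}, using the coset realisations $\mc{X}^J_1\approx G^J_1(\R)/({\rm SO}(2)\times\R)$ and $\tilde{\mc{X}}^J_1\approx G^J_1(\R)/{\rm SO}(2)$. In each case $G^J_1(\R)$ acts on the quotient by left translation (for $\mc{X}^J_1$ this factors through the homomorphism $(M,X,\kappa)\mapsto(M,X)$ onto $G^J(\R)_0$), so the transformed coordinates of a point are obtained by multiplying a coset representative on the left by $(M,X,\kappa)$ and re-expressing the product in the chosen coordinates. Before computing, I would verify that $(x,y,p,q,\kappa)$ are constant along the right ${\rm SO}(2)$-orbits, and that $(x,y,p,q)$ are in addition constant along the right orbits of the Heisenberg centre; this guarantees that they descend to well-defined functions on the respective homogeneous spaces, and hence that the induced actions are well defined.

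These invariance checks are immediate from \eqref{COMPL}. Taking $g'=(k,0,0)$ with $k\in{\rm SO}(2)\subset\SL$ gives $M_1=Mk$, $X_1=Xk$ and $\kappa_1=\kappa$ (the relevant determinant has a zero row), whence $Y_1=X_1M_1^{-1}=XM^{-1}=Y$ by \eqref{DEFY}; thus only $\theta$ is affected and $(x,y,p,q,\kappa)$ are ${\rm SO}(2)$-invariant. Taking $g'=(\mathbbm{1}_2,0,\kappa_0)$ gives $M_1=M$, $X_1=X$ and $\kappa_1=\kappa+\kappa_0$, so $(x,y,p,q)$ are additionally invariant under the central $\R$. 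Hence $(x,y,p,q)$ parametrise $\mc{X}^J_1$ and $(x,y,p,q,\kappa)$ parametrise $\tilde{\mc{X}}^J_1$, and the $\kappa$ of the acting element never enters the first four coordinates, confirming that the action on $\mc{X}^J_1$ factors through $G^J(\R)_0$.

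Now I would extract the action from \eqref{COMPL} applied to a representative $g'=(M',X',\kappa')$ with S-coordinates $(x',y',p',q',\kappa')$. The relation $M_1=MM'$ gives the Möbius transformation $\tau_1=(a\tau'+b)/(c\tau'+d)$, equivalently \eqref{ALIGNN1} in the real coordinates $(x_1,y_1)$. For the Heisenberg part, using $X_1=XM'+X'$, $M_1^{-1}=(M')^{-1}M^{-1}$ and \eqref{DEFY},
\[
Y_1=X_1M_1^{-1}=(XM'+X')(M')^{-1}M^{-1}=\big(X+X'(M')^{-1}\big)M^{-1}=Y+Y'M^{-1},
\]
which is exactly \eqref{AC12} once $M^{-1}=\left(\begin{smallmatrix}d&-b\\-c&a\end{smallmatrix}\right)$ is inserted. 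The formula for $z_1$ then follows by substituting $z=p\tau+q$: writing $z_1=p_1\tau_1+q_1$, clearing the denominator $c\tau'+d$ and using $ad-bc=1$, the coefficient of $\tau'$ collapses to $\lambda+p'$ and the constant term to $\mu+q'$, giving $z_1=(z'+\lambda\tau'+\mu)/(c\tau'+d)$, i.e.\ \eqref{AC1}. Finally, the last entry of \eqref{COMPL} is $\kappa_1=\kappa+\kappa'+\left|\begin{smallmatrix}XM'\\X'\end{smallmatrix}\right|$, and a short computation with \eqref{DEFY} identifies the determinant with $\lambda q'-\mu p'$, yielding \eqref{AC2}; the relation $(p',q')=(\eta'/y',\xi'-(x'/y')\eta')$ is merely a restatement of $z'=p'\tau'+q'=\xi'+\ii\eta'$ from \eqref{TAUZ}.

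The only genuine computation is the collapse of the $z_1$-expression, which is pure bookkeeping governed by the unimodularity constraint $ad-bc=1$; the substantive point of the proof is conceptual, namely the correct identification of the isotropy subgroups and the verification that $(p,q)$ and $\kappa$ descend to the quotients. As a cross-check, the action \eqref{AC1} on $\mc{X}^J_1$ coincides with \eqref{TSLL} of Proposition~\ref{PRFC} under $v=\tau$, $u=z$ and $\alpha=\mu+\ii\lambda$, which also confirms the consistency of Lemma~\ref{LEMN} with the FC-transform picture.
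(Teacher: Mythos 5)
Your proof is correct and takes essentially the same route as the paper: everything is read off from the composition law \eqref{COMPL} and the definition \eqref{DEFY} of $Y=XM^{-1}$, which is exactly what the paper does, except that it delegates the details of \eqref{AC1} to \cite{bs,jac1,BER77} and merely notes that $\kappa_1$ and $(p',q')$ follow from \eqref{COMPL} and \eqref{TAUZ}. Your version is more self-contained; the invariance checks along the right ${\rm SO}(2)$- and centre-orbits, the identity $Y_1=Y+Y'M^{-1}$, the collapse of $z_1$ using $ad-bc=1$ and $(\lambda,\mu)=(p,q)M$, and the identification of the determinant with $\lambda q'-\mu p'$ are all verified correctly.
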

\begin{proof}
The assertion \eqref{AC1} is expressed in \cite[p.~11]{bs}, reproduced in \cite[Remark~9.1]{jac1}. Details of the proof are given in \cite[Remark~1]{BER77}. The calculation of $\kappa_1$ in~\eqref{AC2} is an easy consequence of the composition law~\eqref{COMPL}. The expression of~$(p',q')$ in~\eqref{AC2} is a consequence of~\eqref{TAUZ}.
\end{proof}

\subsection{Fundamental vector fields}

In order to calculate the change of coordinates of a contravariant vector field under the change of variables \eqref{TAUZ} $(x,y,\xi,\eta )\rightarrow
(x,y,p,q)$, where $(p,q)=\big(\frac{\eta}{y},\xi-\frac{\eta}{y}x\big)$, we firstly
observe that the Jacobian is $\frac{\pa(x,y,\xi,\eta)}{\pa(x,y,p,q)}=-y<0$, and we get easily
 \begin{gather}\label{changePQ}\pa_x\rightarrow \pa_x
 -p\pa_q,\qquad \pa_y\rightarrow
 \pa_y-\frac{p}{y}(\pa_p-x\pa_q),\qquad \pa_{\xi}\rightarrow\pa_q,\qquad \pa_\eta\rightarrow \frac{1}{y}(\pa_p-x\pa_q).\end{gather}
In order to calculate the change of coordinates of a contravariant vector field under the change of variables \eqref{TAUZ} $(x,y,p,q,\kappa )\rightarrow
(x,y,\xi,\eta,\kappa)$, we get easily
\begin{gather}\label{changePQ2}\pa_x\rightarrow \pa_x+
 p\pa_{\xi},\qquad \pa_y\rightarrow
 \pa_y+p\pa_{\eta},\qquad \pa_{p}\rightarrow x\pa_{\xi}+y\pa_{\eta},\qquad \pa_q\rightarrow\pa_{\xi}.
\end{gather}
With \eqref{changePQ} and the action \eqref{AC1} on $\mc{X}^J_1$, and then with~\eqref{changePQ2} for the action~\eqref{AC2} on $\hat{\mc{X}}^J_1$, we get
\begin{Proposition}\label{4pr}
The fundamental vector fields expressed in coordinates $(\tau,z)$ of the Siegel--Jacobi upper half-plane $\mc{X}^J_1$ on which act the reduced Jacobi group $G^J(\R)_0$ by \eqref{AC1} are given by the holomorphic vector fields
\begin{subequations}\label{EQQ1}
\begin{gather}
F^* =\pa_{\tau}, \qquad G^*= -\tau^2\pa_{\tau} -z\tau\pa_{z}, \qquad H^*=2\tau\pa_{\tau}+z\pa_z,\\
P^* =\tau\pa_z,\qquad Q^*=\pa_z, \qquad R^* =0.
\end{gather}
\end{subequations}

Then the real holomorphic fundamental vector fields corresponding to $\tau=x+\ii y$, $y>0 $, $z=\xi+\ii \eta$ in the variables $(x,y,\xi,\eta)$ are
\begin{subequations}\label{EQQ2}
\begin{gather}
F^* =F^*_1, \qquad G^*= G^*_1 +(\eta y-\xi x)\pa_{\xi}-(\xi y+ x\eta)\pa{_\eta},\\
H^* = H^*_1 +\xi\pa_{\xi}+\eta\pa_{\eta}, \qquad P^* =x\pa_{\xi}+y\pa_{\eta},\qquad Q^*=\pa_{\xi}, \qquad R^* =0 ,
\end{gather}
\end{subequations}
where $F^*_1$, $G^*_1$, $H^*_1$ are the fundamental vector fields~\eqref{FUNDFGH} attached to the generators $F$, $G$, $H$ of $\got{sl}(2,R)$ corresponding to the
action \eqref{AC1} of $\rm{SL(2},\R)$ on $\mc{X}_1$.

If we express the fundamental vector fields in the variables $(x,y,p,q)$ where $\xi=px+q$, $\eta= p y$, we find
 \begin{subequations}\label{EQQ3}
\begin{gather}
F^* =F^*_1 -p\pa_q, \qquad G^*= G^*_1-q\pa_p, \qquad H^*=H^*_1-p\pa_p+q\pa_q,\label{EQQ31}\\
P^* =\pa_p,\qquad Q^*=\pa_{q}, \qquad R^* =0. \label{EQQ32}
\end{gather}
\end{subequations}
Now we consider the action \eqref{AC2} of $G^J_1(\R)$ on the points $(\tau,z,\kappa)$ of $\tilde{\mc{X}}^J_1$.

Instead of \eqref{EQQ1}, we get the fundamental vector fields in the variables $(\tau,z,p,q,\kappa)$
\begin{gather*}
F^* =\pa_{\tau}, \qquad G^*= -\tau^2\pa_{\tau} -z\tau\pa_{z},\qquad H^*=2\tau\pa_{\tau}+z\pa_z,\\
P^* =\tau\pa_z+q\pa_{\kappa},\qquad Q^*=\pa_z-p\pa_{\kappa}, \qquad R^* =\pa_{\kappa},\qquad p=\frac{\Im (z)}{\Im (\tau)}, \qquad q=\frac{\Im(\bar{z}\tau)}{\Im(\tau)}.
\end{gather*}
Instead of \eqref{EQQ2}, we get the fundamental vector fields in $\tilde{\mc{X}}^J_1$ in the variables $(x ,y ,\xi ,\eta ,\kappa )$
\begin{gather*}
F^* =F^*_1, \qquad G^*= G^*_1 +(\eta y-\xi x)\pa_{\xi}-(\xi y+ x\eta)\pa{_\eta},\\
H^* = H^*_1 +\xi\pa_{\xi}+\eta\pa_{\eta}, \qquad P^* =x\pa_{\xi}+y\pa_{\eta}+q\pa_{\kappa},\qquad Q^*=\pa_{\xi} -p\pa_{\kappa}, \qquad R^* =\pa_{\kappa} .
\end{gather*}
Instead of \eqref{EQQ3}, we ge the fundamental vector fields in the variables $(x,y,p,q,\kappa)$
\begin{subequations}\label{NEWPQR3}
\begin{gather}
F^* =F^*_1 -p\pa_q, \qquad G^*= G^*_1-q\pa_p, \qquad H^*=H^*_1-p\pa_p+q\pa_q,\label{EQQ31x}\\
P^* =\pa_p+q\pa_k,\qquad Q^*=\pa_q-p\pa_k,\qquad R^*=\pa_{\kappa}.
\end{gather}
\end{subequations}
\end{Proposition}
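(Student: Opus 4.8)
The plan is to compute every fundamental vector field directly from the definition $X^{*}=\frac{{\rm d}}{{\rm d}t}\big|_{t=0}\exp(tX)\cdot(\,\cdot\,)$, using the explicit group actions of Lemma~\ref{LEMN}, and then to transport the answer through the two changes of coordinates recorded in \eqref{changePQ} and \eqref{changePQ2}. Since the actions are at worst quadratic in the coordinates, no genuine analysis is needed, only differentiation of one-parameter subgroups and bookkeeping.

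First I would work in the holomorphic coordinates $(\tau,z)$ with the reduced action \eqref{AC1}. For the $\got{sl}(2,\R)$ generators the one-parameter subgroups are $\exp(tF)=\left(\begin{smallmatrix}1&t\\0&1\end{smallmatrix}\right)$, $\exp(tG)=\left(\begin{smallmatrix}1&0\\t&1\end{smallmatrix}\right)$, $\exp(tH)=\left(\begin{smallmatrix}{\rm e}^{t}&0\\0&{\rm e}^{-t}\end{smallmatrix}\right)$, while for $P,Q,R$ the ${\rm SL}(2,\R)$-part is $\mathbbm{1}_{2}$ and $(\lambda,\mu,\kappa)$ runs along the corresponding axis (each of $P,Q,R$ squares to zero, so $\exp(tP)=\mathbbm{1}_{4}+tP$, etc.). Substituting into $\tau_{1}=\frac{a\tau+b}{c\tau+d}$ and $z_{1}=\frac{z+\lambda\tau+\mu}{c\tau+d}$ and differentiating at $t=0$ gives $F^{*}=\pa_{\tau}$, $G^{*}=-\tau^{2}\pa_{\tau}-z\tau\pa_{z}$, $H^{*}=2\tau\pa_{\tau}+z\pa_{z}$ from the M\"obius factor, and $P^{*}=\tau\pa_{z}$, $Q^{*}=\pa_{z}$, $R^{*}=0$ from the Heisenberg factor; the last vanishes because \eqref{AC1} does not involve $\kappa$. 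This establishes \eqref{EQQ1}.

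Next I pass to the real coordinates $(x,y,\xi,\eta)$ with $\tau=x+\ii y$, $z=\xi+\ii\eta$. Because the action is holomorphic and $t$ is real, a curve $\tau_{1}(t)=x_{1}(t)+\ii y_{1}(t)$ has $\dot x_{1}=\Re\dot\tau_{1}$ and $\dot y_{1}=\Im\dot\tau_{1}$, so a holomorphic term $f\pa_{\tau}$ becomes $\Re(f)\pa_{x}+\Im(f)\pa_{y}$ and $g\pa_{z}$ becomes $\Re(g)\pa_{\xi}+\Im(g)\pa_{\eta}$. Expanding $-\tau^{2}$ and $-z\tau$ in $x,y,\xi,\eta$ reproduces the projected ${\rm SL}(2,\R)$-fields $F^{*}_{1},G^{*}_{1},H^{*}_{1}$ of \eqref{FUNDFGH} together with the extra $\xi,\eta$-terms of \eqref{EQQ2}, and likewise for the Heisenberg fields; this gives \eqref{EQQ2}. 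Feeding the derivative replacements of \eqref{changePQ} and re-expressing the coefficients through $\xi=px+q$, $\eta=py$ then converts \eqref{EQQ2} into the $(x,y,p,q)$ form \eqref{EQQ3}.

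Finally, for the full group $G^{J}_{1}(\R)$ acting on $\tilde{\mc{X}}^{J}_{1}$ by \eqref{AC2} I repeat the differentiation; the only new ingredient is the coordinate $\kappa_{1}=\kappa+\kappa'+\lambda q'-\mu p'$. Along the $\got{sl}(2,\R)$ subgroups $\lambda=\mu=\kappa=0$, so $\kappa_{1}=\kappa'$ is constant and $F^{*},G^{*},H^{*}$ acquire no $\pa_{\kappa}$-component; along $P,Q,R$ the derivatives $\pa_{t}\kappa_{1}$ equal $q,-p,1$, producing the extra terms $q\pa_{\kappa}$, $-p\pa_{\kappa}$, $\pa_{\kappa}$. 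Appending these to the fields already found and moving between the $(\xi,\eta)$- and $(p,q)$-descriptions with the substitutions \eqref{changePQ} and \eqref{changePQ2} yields the three remaining displays, in particular \eqref{NEWPQR3}. I expect the only delicate point to be the coordinate change for $G^{*}$ and $H^{*}$: there the coefficients that depend on $\xi,\eta$ interact with the derivative replacements of \eqref{changePQ}, and one must verify that the $\pa_{p},\pa_{q}$ contributions arising from $H^{*}_{1}$ and from $\xi\pa_{\xi}+\eta\pa_{\eta}$ combine to the clean result $H^{*}=H^{*}_{1}-p\pa_{p}+q\pa_{q}$ (and analogously for $G^{*}$) rather than leaving spurious terms.
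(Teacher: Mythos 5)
Your proposal is correct and follows essentially the same route as the paper, which obtains Proposition~\ref{4pr} by differentiating the one-parameter subgroups along the explicit actions \eqref{AC1} and \eqref{AC2} and then transporting the result through the coordinate substitutions \eqref{changePQ} and \eqref{changePQ2}. The cancellations you flag for $G^*$ and $H^*$ do work out (e.g.\ the $\pa_q$-coefficient of $G^*$ collapses to zero and the $\pa_p$-coefficient to $-q$), so no gap remains.
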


\subsection{Invariant metrics}\label{S54}
We explain the method to get invariant metrics on $G$-homoge\-neous manifolds $M$ from invariant metrics of $G$, see also Appendix~\ref{S911}.

Let $M=G/H$ be a reductive homogeneous space. If $\g$ $(\h)$ is the Lie algebra of~$G$ (respectively,~$H$), then there exists a vector space
$\m$ such that we have the vector space decomposition $\g=\m+\h$, $\m\cap\h=\varnothing$, and the tangent space at $x$, $T_xM$, can be identified with $\m$, where
$H=G_x$ is the isotropy group at~$x$, see Definition~\ref{DEF1} and Lemma~\ref{LMN}. Then let $X_i$, $i=1,\dots,n$ be a~basis of the Lie algebra
$\g$ such that
\[\m=\langle X_1,\dots,X_m\rangle ,\qquad \h=\langle X_{m+1},\dots,X_n\rangle ,\]
where $\dim \m=m$. The left-invariant one forms $\lambda_i$ on $G$ are given by
\[g^{-1}{\rm d} g=\sum_{i=1}^n\lambda_iX_i,\] and the left-invariant vector
fields $L^i$ on $G$ are determined from the relations $\langle \lambda_i\,|\,L^j\rangle =\delta_{ij}$, $i,j=1,\dots,n$. Then the invariant metric on $G$ is given by ${\rm d} s^2_G=\sum_{i=1}^n\lambda^2_i$ and $g_G(L^i,L^j)=\delta_{i,j}$, where $i,j=1,\dots,n$. Let now $L^j_0$ be the projections on $M$ of the vector fields $L^j$, $j=1,\dots,m$. We stil have $\langle \lambda_i\,|\,L^j_0\rangle =\delta_{ij}$, $i,j=1,\dots,m$ and ${\rm d} s^2_M=\sum_{i=1}^m\lambda_i^2$. The fundamental vector fields~$X^*_i$, $i=1,\dots,m$ are Killing vectors of the metric $g_M$.

Now we calculate the left-invariant one-forms on $G^J_1(\R)$
\begin{gather*}
 g^{-1}{\rm d} g=\lambda^FF+\lambda^GG+\lambda^HH+\lambda^PP+\lambda^QQ+\lambda^RR,\end{gather*}
where $g$ is as in \eqref{SP2R} and $g^{-1}$ as in \eqref{INVV}. We find the left-invariant one-forms on $G^J_1(\R)$
\begin{subequations}\label{BIGH}
\begin{gather}
\lambda^F =\lambda^f,\qquad \lambda^G=\lambda^g,\qquad \lambda^H=\lambda^h,\\
\lambda^P= {\rm d}\lambda-p{\rm d} a-q{\rm d} c=c{\rm d} q+ a{\rm d}
 p=\lambda^p-\lambda\lambda^h-\mu\lambda^g\\
\hphantom{\lambda^P}{}= -y^{-\frac{1}{2}}\sin \theta {\rm d} q +\big(y^{\frac{1}{2}}\cos\theta
 -xy^{-\frac{1}{2}}\sin\theta \big){\rm d} p,\label{LDP}\\
\lambda^Q = d {\rm d} q +b{\rm d} p=\lambda^q-p{\rm d} b-q{\rm d} d= \lambda^q-\lambda\lambda^f+\mu\lambda^h\\
\hphantom{\lambda^Q }{} = y^{-\frac{1}{2}}\cos\theta {\rm d} q
 +\big(y^{\frac{1}{2}}\sin\theta+xy^{-\frac{1}{2}}\cos\theta\big){\rm d} p,\label{LDQ}\\
 \lambda^R = {\rm d} \kappa -p{\rm d} q +q{\rm d} p=\lambda^r+\lambda^2\lambda^f-\mu^2\lambda^g-2\lambda\mu\lambda^h.\label{LDK}
\end{gather}
\end{subequations}
In \eqref{BIGH}, equations \eqref{LDP}, \eqref{LDQ}, \eqref{LDK} are expressed in the S-coordinates $(x,y,\theta, p,q,\kappa)$, $\lambda^p$, $\lambda^q$, $\lambda^r$ have the expression~\eqref{LEFT1} in the $(\lambda,\mu,\kappa)$-coordinates, while $\lambda^f$, $\lambda^g$, $\lambda^h$ have the expressions \eqref{LEFTRIGHT} in the $(a,b,c,d)$-coordinates of~$\SL$ and~\eqref{LEFTRIGHT2} are expressed in the $(x,y,\theta)$-coordinates. Also the elements $a$, $b$, $c$, $d$ of the matrix $M\in \SL $ \eqref{SP2R} are expressed in the $(x,y,\theta)$-coordinates by~\eqref{SCXYT}.

Now we calculate the left-invariant vector fields for the real Jacobi group
\begin{Proposition}\label{PrLI}
 The left-invariant vector fields $L^{\alpha}$ for the
real Jacobi group $G^J_1(\R)$ orthogonal with respect to the
invariant one-forms $\lambda^{\beta}$,
 \[\langle \lambda^{\beta}\,|\,L^{\alpha}\rangle =\delta_{\alpha,\beta},
\qquad \alpha,\beta= F, G, H, P, Q, R,\] are given by the equations
\begin{subequations}\label{LFLf}
\begin{gather}
L^F = L^f,\qquad L^G= L^g,\qquad L^H= L^h, \qquad L^P=L^P_0+L^P_+,\qquad L^Q=L^Q_0+L^Q_+,\label{LFLGHHH}\\
L^P_0 = d\frac{\pa}{\pa p}-b\frac{\pa}{\pa q}=\frac{\cos\theta}{y^{\frac{1}{2}}}\frac{\pa}{\pa p}-\frac{x\cos \theta + y \sin
 \theta}{y^{\frac{1}{2}}}\frac{\pa}{\pa q} ,\label{LFLf1}\\
L^P_+ = -(pb+qd)\frac{\pa}{\pa \kappa}=-\frac{1}{y^{1/2}}[p(x\cos \theta+ y\sin\theta)+ q\cos\theta]\frac{\pa}{\pa \kappa},\label{LPpl}\\
L^Q_0 = -c\frac{\pa}{\pa p}+ a \frac{\pa}{\pa q}=\frac{\sin\theta}{y^{\frac{1}{2}}}\frac{\pa}{\pa p}+ \frac{y\cos\theta
 -x\sin\theta}{y^{\frac{1}{2}}}\frac{\pa}{\pa q} ,\label{LFLf2}\\
L^Q_+ =(pa+qc)\frac{\pa}{\pa \kappa}
 =\frac{1}{y^{1/2}}[p(y\cos\theta-x\sin\theta)-q\sin\theta]\frac{\pa}{\pa \kappa},\label{LFQpl}\\
 L^R =\frac{\pa}{\pa \kappa}. \label{LFLf3}
\end{gather}
\end{subequations}
The invariant vector fields $L^F$, $L^G$, $L^H$, $L^P$, $L^Q$, $L^R$ verify the commutations relations \eqref{FGHCOM}, \eqref{PQT1} and \eqref{MORCOM} of the generators $F$, $G$, $H$, $P$, $Q$, $R$ of the Lie algebra $\got{g}^J_1(\R)$.
\end{Proposition}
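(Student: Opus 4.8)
The statement bundles two claims: the duality $\langle \lambda^{\beta}\,|\,L^{\alpha}\rangle = \delta_{\alpha\beta}$ and the commutation relations. The plan is to verify the duality by direct contraction, organised around the block structure of the $S$-coordinates, and then to read off the brackets from the general fact that the frame dual to a Maurer--Cartan coframe is the left-invariant frame.

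First I would record how the coframe \eqref{BIGH} splits. The forms $\lambda^{F}=\lambda^{f}$, $\lambda^{G}=\lambda^{g}$, $\lambda^{H}=\lambda^{h}$ contain only ${\rm d}x,{\rm d}y,{\rm d}\theta$; the forms $\lambda^{P}=a\,{\rm d}p+c\,{\rm d}q$ and $\lambda^{Q}=b\,{\rm d}p+d\,{\rm d}q$ contain only ${\rm d}p,{\rm d}q$; and $\lambda^{R}={\rm d}\kappa-p\,{\rm d}q+q\,{\rm d}p$ contains only ${\rm d}p,{\rm d}q,{\rm d}\kappa$. Dually, $L^{F},L^{G},L^{H}$ act only along $\pa_{x},\pa_{y},\pa_{\theta}$ and $L^{P},L^{Q},L^{R}$ only along $\pa_{p},\pa_{q},\pa_{\kappa}$. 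Consequently every cross pairing $\langle\lambda^{F,G,H}\,|\,L^{P,Q,R}\rangle$ and $\langle\lambda^{P,Q,R}\,|\,L^{F,G,H}\rangle$ vanishes automatically, and the $\SL$ block $\langle\lambda^{F,G,H}\,|\,L^{F,G,H}\rangle$ reproduces exactly the $\SL$ duality already checked for \eqref{LEFTRIGHT2} and \eqref{MNOP}. This reduces everything to the $3\times3$ Heisenberg block.

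In that block the diagonal entries $\langle\lambda^{P}\,|\,L^{P}\rangle=ad-bc$ and $\langle\lambda^{Q}\,|\,L^{Q}\rangle=ad-bc$ equal $1$ by $\det M=1$, while $\langle\lambda^{P}\,|\,L^{Q}\rangle$ and $\langle\lambda^{Q}\,|\,L^{P}\rangle$ vanish by an immediate $\pm ac$ and $\pm bd$ cancellation. The one point I expect to require care---indeed the only non-automatic part---is the row and column of $\lambda^{R},L^{R}$: the $\pa_{\kappa}$-components $L^{P}_{+}=-(pb+qd)\pa_{\kappa}$ and $L^{Q}_{+}=(pa+qc)\pa_{\kappa}$ are precisely what make $\langle\lambda^{R}\,|\,L^{P}\rangle=qd+pb-(pb+qd)=0$ and $\langle\lambda^{R}\,|\,L^{Q}\rangle=-qc-pa+(pa+qc)=0$, whereas $\langle\lambda^{R}\,|\,L^{R}\rangle=1$ comes from the ${\rm d}\kappa$--$\pa_{\kappa}$ contraction and $\langle\lambda^{P}\,|\,L^{R}\rangle=\langle\lambda^{Q}\,|\,L^{R}\rangle=0$ since $\lambda^{P},\lambda^{Q}$ carry no ${\rm d}\kappa$. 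Thus the form of $L^{P}_{+},L^{Q}_{+}$ is forced by duality, and this is where the content of the computation lies.

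For the commutation relations I would argue structurally. By construction the $\lambda^{\alpha}$ are the components of $g^{-1}{\rm d}g$ in the basis $F,G,H,P,Q,R$, so their dual frame $L^{\alpha}$ is the left-invariant extension of that basis; for left-invariant vector fields one has $[L^{\alpha},L^{\beta}]=L^{[\alpha,\beta]}$ with the same structure constants as $\got{g}^J_1(\R)$, so the relations \eqref{FGHCOM}, \eqref{PQT1}, \eqref{MORCOM} for $F,G,H,P,Q,R$ pass verbatim to $L^{F},\dots,L^{R}$ (with no sign reversal, in contrast to the fundamental vector fields). Should a self-contained check be preferred, the same relations follow by direct bracketing of \eqref{LFLf}; only $[L^{P},L^{F}]$, $[L^{Q},L^{G}]$ and $[L^{P},L^{Q}]$ involve the $\pa_{\kappa}$-terms in an essential way, and they reproduce \eqref{MORCOM} together with $[L^{P},L^{Q}]=2L^{R}$.
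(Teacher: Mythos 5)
Your proposal is correct and follows essentially the same route as the paper: the paper states Proposition~\ref{PrLI} without a written-out proof, relying on exactly this direct verification of $\langle\lambda^{\beta}\,|\,L^{\alpha}\rangle=\delta_{\alpha\beta}$ from the coframe \eqref{BIGH} together with the general Maurer--Cartan fact \eqref{NR111} that the dual left-invariant frame satisfies $[L_a,L_b]=c^c_{ab}L_c$. Your block decomposition, the $ad-bc=1$ cancellations, and the observation that the $\pa_{\kappa}$-components $L^{P}_{+}$, $L^{Q}_{+}$ are forced by the $\lambda^{R}$-pairings capture precisely the content of the computation.
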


Besides the formulas for $\lambda_1$, $\lambda_2$, $\lambda_3$ defined in~\eqref{1l2l3l}, we introduce the left-invariant one-forms:
 \begin{gather}\label{4lf5lf6lf}
\lambda_4= \sqrt{\gamma}\lambda^P,~\lambda_5=\sqrt{\gamma}\lambda^Q,\qquad \lambda_6=\sqrt{\delta}\lambda^R,\qquad \gamma,\delta>0,
\end{gather}
where $\lambda^P$, $\lambda^Q$, $\lambda^R$ are defined in~\eqref{BIGH}. Note that the parameters $\gamma$, $\delta$ introduced in~\eqref{4lf5lf6lf} will appear also in the invariant metrics~\eqref{linvG} on $\tilde{\mc{X}}^J_1$ and~\eqref{MTRTOT} on~$G^J_1(\R)$, while in the metric \eqref{begG} on $\mc{X}^J_1$ appears only~$\gamma$. The invariant metrics on~$\mc{D}^J_n$ and $\mc{X}^J_n$ depend only of two parameters $\alpha, \gamma>0$ \cite{sbj,nou,SB15,Yan,Y08}. In fact, the first time they appear in the papers of K\"ahler \cite{cal3,cal} and Berndt \cite{bern84,bern}, parameterizing the invariant metric on~$\mc{X}^J_1$.

Also, besides the left-invariant vector fields $L^1$, $L^2$, $L^3$ defined in~\eqref{1L2L3L}, we introduce the left invariant one forms
\begin{gather}\label{4L5L6L}
L^4=\frac{1}{\sqrt{\gamma}}L^P,\qquad L^5=\frac{1}{\sqrt{\gamma}}L^Q,\qquad L^6=\frac{1}{\sqrt{\delta}}L^R,
\end{gather}
where $L^P$, $L^Q$, $L^R$ are defined in \eqref{LFLf}. The vector fields $L^i$, $i=1,\dots,6$ verify the commutations relations
\begin{subequations}\label{FUFUFU}
\begin{alignat}{4}
&\big[L^1,L^2\big] =-\frac{\sqrt{\beta}}{\alpha}L^3, \qquad && \big[L^2,L^3\big]=\frac{1}{2\sqrt{\beta}}L^1, \qquad && \big[L^3,L^1\big]=\frac{1}{\sqrt{\beta}}L^2,&\\
&\big[L^1,L^4\big] = -\frac{1}{2\sqrt{\alpha}}L^5, \qquad && \big[L^1,L^5\big]=-\frac{1}{2\sqrt{\alpha}}L^4, \qquad && \big[L^1,L^6\big]= 0, &\\
&\big[L^2,L^4\big]= -\frac{1}{2\sqrt{\alpha}}L^4, \qquad && \big[L^2,L^5\big] =\frac{1}{2\sqrt{\alpha}}L^5,\qquad && \big[L^2,L^6\big]=0,& \\
&\big[L^3,L^4\big]= - \frac{1}{2\sqrt{\alpha}}L^5, \qquad && \big[L^3,L^5\big]=\frac{1}{2\sqrt{\beta}}L^4, \qquad && \big[L^3,L^6\big]= 0,& \\
&\big[L^4,L^5\big] = \frac{2\sqrt{\delta}}{\gamma}L^6, \qquad && \big[L^4,L^6\big] = 0,\qquad && \big[L^5,L^6\big]= 0.&
\end{alignat}
\end{subequations}

Similarly, we introduce
\begin{gather}\label{4L5L6L0}
L^4_0=\frac{1}{\sqrt{\gamma}}L^P_0,\qquad L^5_0=\frac{1}{\sqrt{\gamma}}L^Q_0, \qquad L^6_0=\frac{1}{\sqrt{\delta}}L^R.
\end{gather}

Recalling also Proposition~\ref{PRFC}, where we have replaced $u=pv+q$, $v=x+\ii y$ with $z=p\tau +q $, respectively $\tau= x+\ii y$, and $k=2c_1$, $\mu=\frac{c_2}{2}$, we have proved:
\begin{Proposition}\label{Pr4} The balanced metric \eqref{METRS} on the Siegel--Jacobi upper half-plane $\mc{X}^J_1$, left-invariant to the action \eqref{AC1}, \eqref{AC11}, \eqref{AC12} of reduced group $G^J(\R)_0$ is
\begin{subequations}\label{METRS}
\begin{gather}{\rm d} s_{\mc{X}^J_1}^2(\tau,z) =-c_1\frac{{\rm d} \tau{\rm d} \bar{\tau}}{(\tau-\bar{\tau})^2}+\frac{2\ii c_2}{\tau-\bar{\tau}}({\rm d}
z-p{\rm d}\tau)\times cc, \qquad p= \frac{z-\bar{z}}{\tau-\bar{\tau}},\label{METRS1}\\
{\rm d} s^2_{\mc{X}^J_1}(x,y,p,q) =
c_1\frac{{\rm d} x^2 + {\rm d} y^2}{4y^2} +\frac{c_2}{y}\big[\big(x^2+y^2\big){\rm d} p^2+{\rm d} q^2+2x{\rm d} p{\rm d} q\big]\nonumber\\
\hphantom{{\rm d} s^2_{\mc{X}^J_1}(x,y,p,q)}{} =c_1\frac{{\rm d} x^2+{\rm d} y^2}{4y^2} + c_2\frac{x^2 + y^2}{y}\left[ \left({\rm d} p + \frac{x}{x^2 + y^2}{\rm d}
 q\right)^2 + \left(\frac{y{\rm d} q}{x^2 + y^2}\right)^2 \right],\!\!\!\label{METRS2}\\
{\rm d} s_{\mc{X}^J_1}^2(x,y,\xi,\eta) = c_1\frac{{\rm d} x^2+{\rm d} y^2}{4y^2} \nonumber\\
\hphantom{{\rm d} s_{\mc{X}^J_1}^2(x,y,\xi,\eta) =}{} +\frac{c_2}{y}\left[{\rm d} \xi^2+{\rm d} \eta^2
 +\left(\frac{\xi}{y}\right)^2({\rm d} x^2+{\rm d} y^2)-2\frac{\eta}{y}({\rm d} x{\rm d} \xi+{\rm d} y{\rm d} \eta)\right]. \label{METRS3}
\end{gather}
\end{subequations}
The metric \eqref{METRS} is K\"ahler.

If we denote $\frac{c_1}{4}=\alpha$, $c_2=\gamma$, then the matrix attached to the left invariant metric~\eqref{METRS2} on~$\mc{X}^J_1$ reads
\begin{gather}
g_{\mc{X}^J_1} = \left(\begin{matrix}g_{xx} &0 &0 &0\\
0& g_{yy}& 0& 0\\
0& 0& g_{pp} & g_{pq} \\0 & 0& g_{pq}& g_{qq}
\end{matrix}\right) ,\nonumber\\
g_{xx} = g_{yy} = \frac{\alpha}{y^2},\qquad g_{pp} = \gamma\frac{x^2+y^2}{y},\qquad g_{qq} = \frac{\gamma}{y},\qquad g_{pq}=\gamma\frac{x}{y}.\label{begG}
\end{gather}
The metric \eqref{METRS2} can be written as
\[{\rm d} s^2_{\mc{X}^J_1}=\lambda_1^2+\lambda_2^2 +\lambda_4^2+\lambda_5^2.\]
The vector fields $L^j_0$ dual orthogonal to the invariant one-forms $\lambda_i$, $\langle \lambda_i\,|\,L^j_0\rangle=\delta_{ij}$, $i,j=1,2,4,5$, with
respect to the bases ${\rm d} x$, ${\rm d} y$, ${\rm d} p$, ${\rm d} q$ and $\frac{\pa}{\pa x}$, $\frac{\pa} {\pa y}$, $\frac{\pa}{\pa p}$, $\frac{\pa}{\pa q}$ are
\begin{gather*}
L^1_0=\frac{y}{\sqrt{\alpha}}\left(\cos 2\theta\frac{\pa }{\pa x}+\sin
2\theta \frac{\pa}{\pa y}\right),\qquad L^2_0=\frac{y}{\sqrt{\alpha}}\left(-\sin2\theta\frac{\pa}{\pa x}+\cos
2\theta\frac{\pa}{\pa y}\right),
\end{gather*}
and $L^4_0$, $L^5_0$ defined by \eqref{4L5L6L0}. The metric \eqref{METRS2} is orthonormal with respect to the vector fields $L^1_0$, $L^2_0$, $L^4_0$, $L^5_0$.

The fundamental vector fields given by \eqref{EQQ3} are the solutions of the equations of the Killing vector fields \eqref{SIASTA} on $\mc{X}^J_1$ in the variables $(x,y,p,q)$ corresponding to the metric~\eqref{METRS2}, invariant to the action \eqref{AC1}, made explicit in \eqref{ALIGNN} and \eqref{AC12}, \eqref{AC2}:
\begin{subequations}\label{SIASTA}
\begin{gather}
 -X^2+y\pa_xX^1=0,\\
 \pa_xX^2+\pa_yX^1=0,\\
 c_2\big[\big(x^2+y^2\big)\pa_xX^3+x\pa_xX^4\big]+\frac{c_1}{4y}\pa_pX^1=0,\\
 \frac{c_1}{4y}\pa_qX^1 +c_2\big(x\pa_xX^3+\pa_xX^4\big) =0,\\
 -X^2+y\pa_yX^2=0,\\
 c_2\left[\big(x^2+y^2\big)\pa_yX^3+\frac{x}{y}\pa_yX^4\right]+\frac{c_1}{y}\pa_pX^2=0,\\
 c_2\big[x\pa_yX^3+\pa_yX^4\big]+\frac{c_1}{4y}\pa_qX^2=0,\\
2xyX^1+\big({-}x^2+y^2\big)X^2+2y\big(x^2+y^2\big)\pa_pX^3+2xy\pa_pX^4=0,\\
yX^1-xX^2+xy\pa_pX^3+y\pa_pX^4+y\big(x^2+y^2\big)\pa_qX^3+xy\pa_qX^4=0,\\
-X^2+2xy \pa_qX^3+2y\pa_qX^4=0.
\end{gather}
\end{subequations}
\end{Proposition}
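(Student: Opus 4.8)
The plan is to assemble the statement from the $G^J(\R)_0$-invariant K\"ahler two-form~\eqref{BFR} of Proposition~\ref{PRFC} together with the explicit left-invariant one-forms already computed in~\eqref{LDP}, \eqref{LDQ}. First I would read the metric off the two-form: since the Riemannian metric associated with $-\ii\omega$ is obtained by symmetrising each wedge $\wedge$ into an ordinary product, I substitute $u=pv+q$, $v=x+\ii y$ by $z=p\tau+q$, $\tau=x+\ii y$, together with $k=2c_1$, $\mu=\frac{c_2}{2}$, and rewrite $B={\rm d} u-\frac{u-\bar u}{v-\bar v}{\rm d} v$ as ${\rm d} z-p\,{\rm d}\tau$ with $p=\frac{z-\bar z}{\tau-\bar\tau}$. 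This yields \eqref{METRS1} at once. Using $\tau-\bar\tau=2\ii y$ and ${\rm d}\tau\,{\rm d}\bar\tau={\rm d} x^2+{\rm d} y^2$, and expanding the real and imaginary parts of ${\rm d} z-p\,{\rm d}\tau$ in the coordinates $(x,y,p,q)$ and $(x,y,\xi,\eta)$, produces the two real forms \eqref{METRS2} and \eqref{METRS3}. The K\"ahler property is inherited from~\eqref{BFR}, and reading off the coefficients of~\eqref{METRS2} gives the matrix~\eqref{begG} once we put $\frac{c_1}{4}=\alpha$, $c_2=\gamma$.

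Next I would establish the decomposition ${\rm d} s^2_{\mc{X}^J_1}=\lambda_1^2+\lambda_2^2+\lambda_4^2+\lambda_5^2$. The first two squares were already identified in~\eqref{MBER} as $\alpha\frac{{\rm d} x^2+{\rm d} y^2}{y^2}$, which is exactly the first summand of~\eqref{METRS2} because $\frac{c_1}{4}=\alpha$. For the remaining two, I substitute the explicit one-forms~\eqref{LDP}, \eqref{LDQ} into $\lambda_4^2+\lambda_5^2=\gamma\big[(\lambda^P)^2+(\lambda^Q)^2\big]$ and collect the coefficients of ${\rm d} p^2$, ${\rm d} q^2$ and ${\rm d} p\,{\rm d} q$. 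Using $\cos^2\theta+\sin^2\theta=1$, all $\theta$-dependence cancels and one finds $(\lambda^P)^2+(\lambda^Q)^2=\frac{1}{y}\big[(x^2+y^2){\rm d} p^2+{\rm d} q^2+2x\,{\rm d} p\,{\rm d} q\big]$, which with $\gamma=c_2$ reproduces the second bracket of~\eqref{METRS2}.

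To verify that $L^1_0$, $L^2_0$, $L^4_0$, $L^5_0$ constitute the dual orthonormal frame, I would pair them against $\lambda_1$, $\lambda_2$, $\lambda_4$, $\lambda_5$, using the definitions~\eqref{4L5L6L0} and the expressions~\eqref{LFLf1}, \eqref{LFLf2}, and check $\langle\lambda_i\,|\,L^j_0\rangle=\delta_{ij}$ by direct contraction in the basis ${\rm d} x,{\rm d} y,{\rm d} p,{\rm d} q$; orthonormality with respect to~\eqref{METRS2} is then automatic, the metric being the sum of the squares of the dual one-forms.

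Finally, the assertion that the fundamental vector fields~\eqref{EQQ3} solve the Killing system~\eqref{SIASTA} I would obtain conceptually: the metric~\eqref{METRS2} is invariant under the action~\eqref{AC1}, so each one-parameter subgroup acts by isometries and its infinitesimal generator, namely the fundamental vector field, is a Killing field, as recorded in the general method of Section~\ref{S54}. For completeness I would also substitute the six fields~\eqref{EQQ3} into the ten equations~\eqref{SIASTA} and confirm them directly. The main labour---though not a genuine obstacle---lies in this last direct verification and in tracking the numerous cross terms when passing between~\eqref{METRS1}, \eqref{METRS2} and~\eqref{METRS3}; by contrast the conceptual steps, inheritance of the K\"ahler property and of the invariance, are essentially automatic once the two-form of Proposition~\ref{PRFC} is at hand.
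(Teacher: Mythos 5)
Your proposal is correct and follows essentially the same route as the paper: the paper itself introduces Proposition~\ref{Pr4} by "recalling Proposition~\ref{PRFC}" with precisely the substitutions $u=pv+q\to z=p\tau+q$, $k=2c_1$, $\mu=\frac{c_2}{2}$, and the remaining claims (the decomposition $\lambda_1^2+\lambda_2^2+\lambda_4^2+\lambda_5^2$ via \eqref{LDP}--\eqref{LDQ}, the duality $\langle\lambda_i\,|\,L^j_0\rangle=\delta_{ij}$, and the Killing property of the fundamental vector fields) are verified by the same direct computations you describe. Your expansion $(\lambda^P)^2+(\lambda^Q)^2=\frac{1}{y}\big[\big(x^2+y^2\big){\rm d} p^2+{\rm d} q^2+2x\,{\rm d} p\,{\rm d} q\big]$ is the key check and it is right.
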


We make a ``historical'' comment
\begin{Comment}\label{CM1}
In \cite[p.~8]{bern84}, Berndt considered the closed two-form $\Omega={\rm d} \bar{{\rm d}} f$ on Siegel--Jacobi upper half-plane $\mc{X}^J_1$, $G^J(\R)_0$-invariant to the action~\eqref{AC1}, obtained from the K\"ahler potential
\begin{gather}\label{POT}
f(\tau,z)= c_1\log (\tau-\bar{\tau}) -\ii
c_2\frac{(z-\bar{z})^2}{\tau-\bar{\tau}}, \qquad c_1,c_2>0,\end{gather}
where $c_1=\frac{k}{2}$, $c_2=2\mu$ comparatively to our formula \eqref{BFR}. Formula \eqref{POT} is presented by Berndt as ``communicated to the author by K\"ahler'', where it is also given equation \eqref{METRS1}, while \eqref{METRS2} has two printing errors. Later, in Section~36 of his last paper~\cite{cal3}, reproduced also in~\cite{cal}, K\"ahler argues how to choose the potential as in~\eqref{POT};
see also \cite[Section~37, equation~(9)]{cal3}, where $c_1=\frac{\lambda}{2}$, $c_2=\ii\mu\pi$, and the metric~(8) differs from the metric~\eqref{METRS} by a~factor two, because the hermitian metric used by K\"ahler is ${\rm d} s^2=2g_{i\bar{j}}{\rm d} z_i{\rm d}\bar{z}_j$.

We also recall that in~\cite{Y07} Yang calculated the metric on $\mc{X}^J_n$, invariant to the action of~$G^J_n(\R)_0$. The equivalence of the metric of Yang with the metric obtained via CS on $\mc{D}^J_n$ and then transported to $\mc{X}^J_n$ via partial Cayley transform is underlined in~\cite{nou}. In particular, the metric~\eqref{METRS3}
 appears in \cite[p.~99]{Y07} for the particular values $c_1=1$, $c_2=4$. See also \cite{Yan,Y08,Y10}.
\end{Comment}

Now we shall establish a metric invariant to the action given in Lemma~\ref{LEMN} of $G^J_1(\R)$ on the extended Siegel--Jacobi upper half-plane $\tilde{\mc{X}}^J_1$. Because the manifold $\tilde{\mc{X}}^J_1$ is 5-dimensional, we want to see if the extended Siegel--Jacobi upper half-plane is a Sasaki manifold, as in the case of $\SL$ in Proposition~\ref{X15}. If we take as contact form $\eta=\lambda_6$, then ${\rm d} \eta =-2\sqrt{\delta}{\rm d} p\wedge{\rm d} q$, and $\eta(\wedge\eta)^2=0$. If we try to determine a contact distribution $\mc{D}=\operatorname{Ann} (\eta)$, we get $\mc{D}=\big\langle \frac{\pa}{\pa p}-q\frac{\pa}{\pa \theta},\frac{\pa}{\pa q}+p\frac{\pa}{\pa\theta}\big\rangle$. From~\eqref{679}, we find $\Phi^{\kappa}_{\lambda}=0$, and $\Phi^{\kappa}_p=q\Phi^{\lambda}_{\kappa}$, $\Phi^{\lambda}_q=-p\Phi^{\lambda}_{\kappa}$, where $\lambda =x,y,p,q,\kappa$. So $\Phi$ has $\operatorname{Rank}(\Phi)<4$. In conclusion, $(\Phi,\xi,\eta)$ chosen as above can not be an almost contact structure for the extended Siegel--Jacobi upper half-plane $\tilde{\mc{X}}^J_1$.

We obtain
\begin{Proposition}\label{Pr5}The metric on the extended Siegel--Jacobi upper half-plane $\tilde{\mc{X}}^J_1$, in the partial S-coordinates $(x,y,p,q,\kappa)$:
\begin{gather}{\rm d} s^2_{\tilde{\mc{X}}^J_1} ={\rm d}
 s^2_{\mc{X}^J_1}(x,y,p,q)+\lambda^2_6(p,q,\kappa)\nonumber\\
\hphantom{{\rm d} s^2_{\tilde{\mc{X}}^J_1}}{} =\frac{\alpha}{y^2}\big({\rm d} x^2+{\rm d}
 y^2\big)+\left[\frac{\gamma}{y}\big(x^2+y^2\big)+\delta q^2\right]{\rm d} p^2+
 \left(\frac{\gamma}{y}+\delta p^2\right){\rm d} q^2 +\delta {\rm d} \kappa^2\nonumber\\
\hphantom{{\rm d} s^2_{\tilde{\mc{X}}^J_1}=}{} + 2\left(\gamma\frac{x}{y}-\delta pq\right){\rm d} p{\rm d} q +2\delta (q{\rm d} p{\rm d}
 \kappa-p{\rm d} q {\rm d} \kappa)\label{linvG}
 \end{gather}
 is left-invariant with respect to the action given in Lemma~{\rm \ref{LEMN}} of the Jacobi group $G^J_1(\R)$.

The matrix attached to metric \eqref{linvG} is
\begin{gather}
g_{\tilde{\mc{X}}^J_1} = \left(\begin{matrix}g_{xx} &0 &0 &0&0\\
0& g_{yy}& 0& 0 & 0\\
0& 0& g'_{pp} & g'_{pq} & g'_{p\kappa}\\0 & 0& g'_{qp}& g'_{qq}
 &g'_{q\kappa}\\
0& 0& g'_{\kappa p}& g'_{\kappa q} & g'_{\kappa\kappa}
\end{matrix}\right),\nonumber\\
g'_{pq} = g_{pq}-\delta p q , \qquad g'_{p\kappa}=\delta q, \qquad g'_{q\kappa}=-\delta p, \qquad
g'_{pp} = g_{pp}+\delta q^2,\nonumber\\
g'_{qq} = g_{qq}+\delta p^2, \qquad g'_{\kappa\kappa} = \delta,\label{begGG}
\end{gather}
while $g_{xx}$, $g_{yy}$, $g_{pp}$, $g_{qq}$, $g_{pq}$ are given in the metric matrix \eqref{begG} associated with the balanced metric~\eqref{METRS2} on~$\mc{X}^J_1$.

The metric \eqref{begGG} is orthonormal with respect to the invariant vector fields $L^i_0$, $i=1,2$, $L^i$, $i=4,5,6$. The fundamental vector fields with respect to the action~\eqref{AC2} in the variables $(x,y,p,q,\kappa)$ are given by \eqref{NEWPQR3}.

The extended Siegel--Jacobi upper half-plane $\tilde{\mc{X}}^J_1$ does not admit an almost contact structure $(\Phi,\xi,\eta)$ with a contact form $\eta=\lambda_6$ and Reeb vector $\xi= \operatorname{Ker}(\eta)$.
\end{Proposition}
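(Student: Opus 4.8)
The plan is to build the metric as a sum of squares of the left-invariant one-forms of Section~\ref{S54}, exactly as for $\mc{X}^J_1$ in Proposition~\ref{Pr4}, but now retaining one extra form. Since $\tilde{\mc{X}}^J_1 = G^J_1(\R)/{\rm SO}(2)$ and the ${\rm SO}(2)$ factor is the subgroup whose generator is dual to $\lambda_3$ (in the Iwasawa coordinates $\theta$ is the rightmost factor and $L^3 = \frac{1}{2\sqrt\beta}\pa_\theta$), the five $\got{m}$-directions are those indexed by $1,2,4,5,6$. I would therefore set \[ {\rm d} s^2_{\tilde{\mc{X}}^J_1} = \lambda_1^2+\lambda_2^2+\lambda_4^2+\lambda_5^2+\lambda_6^2, \] with $\lambda_1,\lambda_2$ from \eqref{1l2l3l} and $\lambda_4,\lambda_5,\lambda_6$ from \eqref{4lf5lf6lf}. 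The first thing to check is that this tensor descends to the quotient: none of the five forms contains ${\rm d}\theta$, and the rotation-invariant groupings $\lambda_1^2+\lambda_2^2$, $\lambda_4^2+\lambda_5^2$, together with $\lambda_6^2$, are $\Ad({\rm SO}(2))$-invariant, so the explicit $\theta$-dependence of \eqref{LDP} and \eqref{LDQ} cancels. Left-invariance under the full $G^J_1(\R)$ (now including the centre, carried by $\lambda_6$) is then inherited from the left-invariance of the $\lambda_i$ on the group, which is the content of the action in Lemma~\ref{LEMN}.

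Next I would substitute the explicit expressions. By \eqref{MBER}, $\lambda_1^2+\lambda_2^2 = \alpha\frac{{\rm d} x^2+{\rm d} y^2}{y^2}$; expanding $\gamma\big((\lambda^P)^2+(\lambda^Q)^2\big)$ from \eqref{LDP} and \eqref{LDQ} collapses to the $\gamma$-block $\frac{\gamma}{y}\big[(x^2+y^2){\rm d} p^2+{\rm d} q^2+2x\,{\rm d} p\,{\rm d} q\big]$ of the balanced metric \eqref{METRS2}; and $\lambda_6^2 = \delta({\rm d}\kappa-p\,{\rm d} q+q\,{\rm d} p)^2$ from \eqref{LDK} supplies the remaining $\delta$-terms. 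Collecting coefficients produces \eqref{linvG} and the matrix \eqref{begGG}, with the shifts $g'_{pp}=g_{pp}+\delta q^2$, $g'_{qq}=g_{qq}+\delta p^2$, $g'_{pq}=g_{pq}-\delta pq$, $g'_{p\kappa}=\delta q$, $g'_{q\kappa}=-\delta p$, $g'_{\kappa\kappa}=\delta$. Orthonormality of $L^1_0,L^2_0,L^4,L^5,L^6$ is then immediate from $\langle\lambda_i\,|\,L^j\rangle=\delta_{ij}$ and the sum-of-squares form of the metric: the duals of an orthonormal coframe are an orthonormal frame. Here $L^4,L^5,L^6$ of \eqref{4L5L6L0} carry no $\pa_\theta$ component and so survive on the quotient unchanged, while $L^1_0,L^2_0$ are the projections of $L^1,L^2$ recorded in Proposition~\ref{Pr4}. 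Finally, the fundamental vector fields for the action \eqref{AC2} are \eqref{NEWPQR3}, already obtained in Proposition~\ref{4pr}, and they are Killing for \eqref{linvG} by the general statement closing Section~\ref{S54}.

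The delicate, and genuinely distinctive, item is the last, negative assertion. Taking $\eta=\lambda_6$ I would compute ${\rm d}\eta = {\rm d}\lambda_6 = -2\sqrt\delta\,{\rm d} p\wedge{\rm d} q$, a two-form of rank $2$ on the five-dimensional $\tilde{\mc{X}}^J_1$; hence $({\rm d}\eta)^2=0$ and $\eta\wedge({\rm d}\eta)^2=0$, so $\lambda_6$ is not a contact form. Running the construction of the associated $(1,1)$-tensor $\Phi$ from the metric and $\eta$ exactly as in the discussion preceding the statement (see Appendix~\ref{appendix4}), one finds $\Phi^\kappa_\lambda=0$ together with $\Phi^\kappa_p=q\Phi^\lambda_\kappa$ and $\Phi^\lambda_q=-p\Phi^\lambda_\kappa$ for $\lambda=x,y,p,q,\kappa$, which force $\operatorname{Rank}(\Phi)<4$. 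Since $\Phi^2=-\mathrm{Id}+\eta\otimes\xi$ requires $\operatorname{Rank}(\Phi)=4$ on a $5$-manifold, no almost contact structure with $\eta=\lambda_6$ can exist, in contrast with the Sasaki situation for $\SL$ in Proposition~\ref{X15}. I expect this to be the main obstacle: one must argue that the rank deficiency is forced by ${\rm d}\lambda_6$ having rank two and therefore obstructs \emph{every} $\Phi$ compatible with the chosen $\eta$ and the metric, not merely one candidate tensor.
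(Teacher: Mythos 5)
Your proposal is correct and follows essentially the same route as the paper: the metric is obtained as $\lambda_1^2+\lambda_2^2+\lambda_4^2+\lambda_5^2+\lambda_6^2={\rm d}s^2_{\mc{X}^J_1}+\lambda_6^2$ with the $\delta$-shifts in \eqref{begGG} coming from expanding $\lambda_6^2=\delta({\rm d}\kappa-p\,{\rm d}q+q\,{\rm d}p)^2$, orthonormality from duality of frame and coframe, and the non-existence of the almost contact structure from ${\rm d}\lambda_6=-2\sqrt{\delta}\,{\rm d}p\wedge{\rm d}q$ forcing $\eta\wedge({\rm d}\eta)^2=0$ and $\operatorname{Rank}(\Phi)<4$ for any $\Phi$ constrained by \eqref{679}. (Only a trivial slip: the frame fields $L^4$, $L^5$, $L^6$ appearing in the orthonormality statement are those of \eqref{4L5L6L}, not \eqref{4L5L6L0}.)
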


With \eqref{LDP}, \eqref{LDQ}, \eqref{LDK}, we find the invariant metric on the Jacobi group $G^J_1(\R)$:

\begin{Theorem}\label{BIGTH}The composition law for the real Jacobi group $G^J_1(\R)$ in the S-coordinates $(x,y,\theta,p,q,\kappa)$ is given by~\eqref{ALIGNN} for $(x,y,\theta)$, \eqref{AC11} for $(p,q)$ and \eqref{AC2} for the coordinate $\kappa$, replacing in the matrix $M\in\SL$ the values of $a$, $b$, $c$, $d$ as function of
 $(x,y,\theta)$ given by~\eqref{SCXYT}.

The left-invariant metric on the real Jacobi group $G^J_1(\R)$ in the S-coordinates $(x,y,\theta,$ $p,q,\kappa)$ is
\begin{gather}
{\rm d}s^2_{G^J_1(\R)} =\sum_{i=1}^6\lambda_i^2 =\alpha\frac{{\rm d} x^2+{\rm d} y^2}{y^2} +\beta\left(\frac{{\rm d} x}{y}+2{\rm d}\theta\right)^2 \nonumber\\
\hphantom{{\rm d}s^2_{G^J_1(\R)} =}{} + \frac{\gamma}{y}\big[{\rm d} q^2+\big(x^2+y^2\big){\rm d} p^2+2x{\rm d} p{\rm d}q\big]+\delta({\rm d} \kappa-p{\rm d} q+q{\rm d} p)^2,\label{MTRTOT}
\end{gather}
where $\lambda_1,\dots,\lambda_3$ are defined by \eqref{1l2l3l}, while $\lambda_4,\dots,\lambda_6$ are defined by \eqref{4lf5lf6lf}, \eqref{LDP}--\eqref{LDK}.
The matrix attached to the metric \eqref{MTRTOT} in the variables $(x,y,\theta,p,q,\kappa)$ reads
\[g_{G^J_1}=\left(\begin{matrix} g_{xx}& 0 &g_{x\theta} &0 &0 &0\\
0 & g_{yy} & 0 & 0 & 0 & 0\\
g_{\theta x} & 0&g_{\theta\theta}& 0 & 0 & 0\\
0 & 0 & 0 & g'_{pp} & g'_{pq}& g'_{p\kappa}\\
0 & 0 & 0 & g'_{qp} & g'_{qq}& g'_{q\kappa}\\
 0& 0& 0& g'_{\kappa p} & g'_{\kappa q} & g'_{\kappa\kappa}
\end{matrix}\right),\]
where $g_{xx}$, $g_{yy}$, $g_{x\theta}$, $g_{\theta\theta}$ are those attached to $\SL$ given by~\eqref{GGG}, $g'_{pp}$, $g'_{pq}$, $g'_{qq}$, $g'_{p\kappa}$, $g'_{q\kappa}$, $g'_{\kappa\kappa}$ are given by~\eqref{begGG}.

We have \begin{gather*}
\langle \lambda_i\,|\,L^j\rangle =\delta_{ij}, \qquad i,j=1,\dots ,6,\end{gather*}
 where $L^1,\dots, L^3$ are defined by~\eqref{1L2L3L}, while $L^4,\dots, L^6$ are defined by \eqref{4L5L6L}, \eqref{LFLf1}--\eqref{LFLf3}. The vector fields $L^i$, $i=1,\dots,6$ verify the commutations relations \eqref{FUFUFU} and are orthonormal with respect to the metric \eqref{MTRTOT}.

Depending of the values of the parametres $\alpha$, $\beta$, $\gamma$, $\delta$, we have invariant metric on the following manifolds:
\begin{enumerate}\itemsep=0pt
\item[$1)$] the Siegel upper half-plane $\mc{X}_1$ if $\beta,\gamma,\delta =0$, see Proposition~{\rm \ref{X15}},
\item[$2)$] the group $\SL$ if $\gamma,\delta=0$, $\beta\not= 0$, see Proposition~{\rm \ref{X15}},
\item[$3)$] the Siegel--Jacobi half-plane $\mc{X}^J_1$ if $\beta, \delta= 0$, see Proposition~{\rm \ref{Pr4}},
\item[$4)$] the extended Siegel--Jacobi half-plane $\tilde{\mc{X}}^J_1$ if $\beta=0$, see Proposition~{\rm \ref{Pr5}},
\item[$5)$] the Jacobi group $G^J_1$ if $\alpha\beta\gamma\delta\not= 0$.
\end{enumerate}
\end{Theorem}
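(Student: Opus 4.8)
The plan is to read this theorem as the assembly of the component computations already carried out in Propositions~\ref{X15}, \ref{Pr4} and~\ref{Pr5}, together with the left-invariant one-forms \eqref{BIGH} and their dual vector fields from Proposition~\ref{PrLI}. The composition law in the $S$-coordinates needs no new work: the $(x,y,\theta)$-part is the $\SL$-action \eqref{ALIGNN}, the $(p,q)$-part is \eqref{AC11}--\eqref{AC12}, and the $\kappa$-part is \eqref{AC2} of Lemma~\ref{LEMN}, after substituting \eqref{SCXYT} for $a,b,c,d$ in terms of $(x,y,\theta)$. For the metric, the decisive structural fact is that $\sum_{i=1}^6\lambda_i^2$ is left-invariant automatically, since every $\lambda_i$ is a component of $g^{-1}{\rm d} g$ and hence left-invariant; so invariance is free and the entire content is the explicit evaluation of the sum of squares.

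For that evaluation I would first exploit a block decomposition. By \eqref{1l2l3l} the forms $\lambda_1,\lambda_2,\lambda_3$ contain only the differentials ${\rm d} x,{\rm d} y,{\rm d}\theta$, while by \eqref{LDP}--\eqref{LDK} and \eqref{4lf5lf6lf} the forms $\lambda_4,\lambda_5,\lambda_6$ contain only ${\rm d} p,{\rm d} q,{\rm d}\kappa$ (with coefficients depending on $x,y,\theta$). Hence $\sum_{i=1}^3\lambda_i^2$ and $\sum_{i=4}^6\lambda_i^2$ generate no mixed differentials, which is exactly the block-diagonal shape of $g_{G^J_1}$. The first block is $\lambda_1^2+\lambda_2^2+\lambda_3^2={\rm d} s^2_{\SL}$ \eqref{MTRSL} by Proposition~\ref{X15}. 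For the second block I would use the form $\lambda^P=a\,{\rm d} p+c\,{\rm d} q$, $\lambda^Q=b\,{\rm d} p+d\,{\rm d} q$ from \eqref{BIGH} to get $\lambda_4^2+\lambda_5^2=\gamma\big[(a^2+b^2){\rm d} p^2+(c^2+d^2){\rm d} q^2+2(ac+bd){\rm d} p\,{\rm d} q\big]$, and then apply the identities $c^2+d^2=1/y$, $ac+bd=x/y$, $a^2+b^2=(x^2+y^2)/y$, which follow directly from \eqref{SCINV} and \eqref{SCXYT}. This yields $\tfrac{\gamma}{y}\big[(x^2+y^2){\rm d} p^2+{\rm d} q^2+2x\,{\rm d} p\,{\rm d} q\big]$, and adding $\lambda_6^2=\delta({\rm d}\kappa-p\,{\rm d} q+q\,{\rm d} p)^2$ from \eqref{LDK} gives \eqref{MTRTOT}; reading off coefficients and comparing with \eqref{GGG} and \eqref{begGG} produces the stated matrix.

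Next I would settle the frame assertions. The duality $\langle\lambda_i\,|\,L^j\rangle=\delta_{ij}$ reduces to Proposition~\ref{PrLI}: since $\langle\lambda^\alpha\,|\,L^\beta\rangle=\delta_{\alpha\beta}$, the definitions \eqref{1l2l3l}, \eqref{4lf5lf6lf}, \eqref{1L2L3L}, \eqref{4L5L6L} use reciprocal scalar factors, so, e.g., $\langle\lambda_1\,|\,L^1\rangle=\tfrac12\langle\lambda^f+\lambda^g\,|\,L^f+L^g\rangle=1$, $\langle\lambda_3\,|\,L^3\rangle=\tfrac12\langle\lambda^f-\lambda^g\,|\,L^f-L^g\rangle=1$, and the mixed pairings such as $\langle\lambda_1\,|\,L^3\rangle$ vanish. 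Orthonormality of $L^1,\dots,L^6$ for \eqref{MTRTOT} is then immediate, because a frame dual to a coframe $\{\lambda_i\}$ is orthonormal whenever the metric is $\sum_i\lambda_i^2$. The commutation relations \eqref{FUFUFU} follow from the structure constants \eqref{FGHCOM}, \eqref{PQT1}, \eqref{MORCOM} of $\got{g}^J_1(\R)$: the $L^\alpha$ obey these relations by Proposition~\ref{PrLI}, and each $L^i$ is a fixed rescaled combination of them, so every bracket is computed by bilinearity. For instance $[L^1,L^2]=\tfrac{1}{4\alpha}[L^F+L^G,L^H]=-\tfrac{1}{2\alpha}(L^F-L^G)=-\tfrac{\sqrt{\beta}}{\alpha}L^3$, using $[H,F]=2F$, $[H,G]=-2G$ and $L^F-L^G=2\sqrt{\beta}L^3$; the remaining fourteen brackets are analogous.

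Finally, the five specializations are read straight off \eqref{MTRTOT}: setting $\beta,\gamma,\delta=0$ leaves the Beltrami metric \eqref{MBER} on $\mc{X}_1$; setting $\gamma,\delta=0$ with $\beta\neq0$ leaves ${\rm d} s^2_{\SL}$; setting $\beta,\delta=0$ leaves the balanced metric \eqref{METRS2} on $\mc{X}^J_1$ of Proposition~\ref{Pr4}; setting only $\beta=0$ leaves the metric \eqref{linvG} on $\tilde{\mc{X}}^J_1$ of Proposition~\ref{Pr5}; and keeping all four parameters nonzero gives the full group metric. The only genuinely laborious step is the explicit reduction of $(\lambda^P)^2+(\lambda^Q)^2$ via the three quadratic identities in $a,b,c,d$, together with the check that no cross-differentials such as ${\rm d} x\,{\rm d} p$ survive; once the block structure is secured, everything else is bookkeeping over the already-established propositions.
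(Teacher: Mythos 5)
Your proposal is correct and follows essentially the same route as the paper, which states Theorem~\ref{BIGTH} as the assembly of Propositions~\ref{X15}, \ref{Pr4}, \ref{Pr5} and~\ref{PrLI} together with the explicit one-forms \eqref{BIGH}; your block decomposition, the quadratic identities $a^2+b^2=(x^2+y^2)/y$, $c^2+d^2=1/y$, $ac+bd=x/y$, and the reciprocal-scaling argument for the duality and the brackets are exactly the computations underlying the paper's statement.
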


We show some consequences of Theorem \ref{BIGTH}. We investigate if the homogeneous manifold~$\mc{X}^J_1$ is a naturally reductive manifold or not. The fact that $\mc{X}^J_1$ is not a~naturally reductive 4-dimensional manifold is well known, see Theorem~\ref{thm2}, but in Proposition~\ref{PRLST} below we present a~direct proof.

\begin{Proposition}\label{PRLST}
The Siegel--Jacobi upper half-plane realized as homogenous Riemannian mani\-fold $\big(\mc{X}^J_1=\frac{G^J_1(\R)}{{\rm SO}(2)\times\R}, g_{\mc{X}^J_1}\big)$ is a reductive, non-symmetric manifold, not naturally reductive with respect to the balanced metric~\eqref{METRS2}.

The Siegel--Jacobi upper half-plane $\mc{X}^J_1$ is not a g.o.\ manifold with respect to the balanced metric.

When expressed in the variables that appear in the FC-transform given in Proposition~{\rm \ref{PRFC}}, $\mc{X}^J_1$ is a naturally reductive space with the metric $g_{\mc{X}_1}\times g_{\R^2}$, where $g_{\mc{X}_1}$ is given by~\eqref{MBER} and~$g_{\R^2}$ is the Euclidean metric~\eqref{MTR}.

If
 \begin{gather}\label{XLP}
\got{g}^J_1\ni X=aL^1+bL^2+cL^3+dL^4+eL^5+fL^6,
\end{gather}
then a geodesic vector of the homogeneous manifold $\mc{X}^J_1$ has one of the following expressions given in Table~{\rm \ref{table1}}.

\begin{table}[t]\centering
\caption{Components of the geodesic vector \eqref{XLP}.}\label{table1}\vspace{1mm}
\begin{tabular}{||c|c|c|c|c|c|c||}
\hline Nr. cr. & {a} & {b} &
{c} & {d} & {e} & {f} \\
\hline 1& $0$ & $0$ & $c$ & $0$ & $0$ & $f$ \\
\hline 2& $a$ & $b$ & $0$ & $0$ & $0$ & $f$ \\
\hline 3& $rc$ & $0$ & $c$ & $\pm rc$& $0$ & $f$ \\
\hline 4& $a$ & $0$ & $-a$ & $0$ &$\epsilon\sqrt{r}a$ & $f$ \\
\hline 5& $\epsilon_1\epsilon_2 \frac{1-r}{\sqrt{r}}e$ & $\epsilon_1 e$
 & $-\frac{\epsilon_1\epsilon_2}{\sqrt{r}}e$
 & $\epsilon_2\sqrt{r}e$ & $e$ & $f$\\
\hline
\end{tabular}

Here $r=\sqrt{\frac{\alpha}{\beta}}$, $\epsilon_1^2=\epsilon_2^2=\epsilon^2=1$.
\end{table}
\end{Proposition}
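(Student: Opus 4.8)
The plan is to work throughout in the left-invariant frame $L^1,\dots,L^6$ of $\got{g}^J_1(\R)$, whose brackets are recorded in~\eqref{FUFUFU}. For $\mc{X}^J_1=\frac{G^J_1(\R)}{{\rm SO}(2)\times\R}$ the isotropy subalgebra is $\h=\langle L^3,L^6\rangle$ (the ${\rm SO}(2)$-generator together with the Heisenberg centre), and a reductive complement is $\m=\langle L^1,L^2,L^4,L^5\rangle$, which is orthonormal for the balanced metric~\eqref{METRS2}. Reductivity $[\h,\m]\subset\m$ is read off directly from~\eqref{FUFUFU}, since each of $[L^3,L^1],[L^3,L^2],[L^3,L^4],[L^3,L^5]$ lies in $\m$ and $L^6$ is central. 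The space is non-symmetric because $[L^1,L^4]_{\m}=-\frac{1}{2\sqrt{\alpha}}L^5\neq 0$, so $[\m,\m]\not\subset\h$. For the failure of natural reductivity I would use the necessary condition $\langle[X,Y]_{\m},Y\rangle=0$ for all $X,Y\in\m$ (obtained by setting $Z=Y$ in the defining skew-symmetry identity); taking $X=L^2$, $Y=L^4$ and using $[L^2,L^4]_{\m}=-\frac{1}{2\sqrt{\alpha}}L^4$ gives $\langle[L^2,L^4]_{\m},L^4\rangle=-\frac{1}{2\sqrt{\alpha}}\neq 0$, which is the desired obstruction.

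For the classification of geodesic vectors I would invoke the geodesic Lemma~\ref{PRR}: the vector $X=aL^1+bL^2+cL^3+dL^4+eL^5+fL^6$ is a geodesic vector iff $\langle[X,Y]_{\m},X_{\m}\rangle=0$ for all $Y\in\m$, where $X_{\m}=aL^1+bL^2+dL^4+eL^5$. Evaluating this on $Y=L^1,L^2,L^4,L^5$ with~\eqref{FUFUFU} and projecting onto $\m$ produces the four polynomial equations
\begin{gather*}
rbc+de=0,\qquad rac=d^2-e^2,\\
bd+(a+c)e=0,\qquad rcd=ad-be,\qquad r=\sqrt{\alpha/\beta}.
\end{gather*}
Note that $f$ never occurs (because $L^6$ is central and $X_{\m}$ is independent of $f$) and $\gamma,\delta$ cancel, so the answer depends only on $r$, in agreement with Table~\ref{table1}. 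The case $c=0$ forces $d=e=0$ by the first two equations, giving line~2. For $c\neq 0$ I would solve the first two equations for $a=(d^2-e^2)/(rc)$ and $b=-de/(rc)$ and substitute into the last two; after simplification these collapse to the factorised conditions $e(rc^2-e^2)=0$ and $d(d^2-r^2c^2)=0$, whose four vanishing/sign combinations reproduce lines~1, 3, 4, 5 of Table~\ref{table1} (with $f$ free throughout). I expect this case analysis, and in particular checking that the substitution really yields the clean factorisation, to be the main technical obstacle.

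The non-g.o.\ statement is then a one-line corollary: $\mc{X}^J_1$ would be a g.o.\ manifold only if every $v\in\m$ admitted an $A\in\h$ with $v+A$ a geodesic vector. Taking $v=L^4$ (i.e.\ $a=b=e=0$, $d=1$, with $c,f$ arbitrary), the second equation $rac=d^2-e^2$ becomes $0=1$, impossible for any $c$; hence $L^4$ has no geodesic extension and $\mc{X}^J_1$ is not a g.o.\ manifold. Finally, for the FC-coordinates I would use Proposition~\ref{PRFC}: the FC-transform is a K\"ahler diffeomorphism realising $(\mc{X}^J_1,\omega_{\mc{X}^J_1})$ as the direct sum $(\mc{X}_1,\omega_{\mc{D}_1})\oplus(\C,\omega_{\C})$, so in these variables the metric is the Riemannian product $g_{\mc{X}_1}\times g_{\R^2}$ of~\eqref{MBER} and the Euclidean metric~\eqref{MTR}. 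Since $\mc{X}_1$ is a symmetric space (Remark~\ref{Rm1}) and $\R^2$ is flat, the product is symmetric, hence naturally reductive, which is the last assertion.
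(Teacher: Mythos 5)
Your proposal is correct, and its skeleton coincides with the paper's: the same decomposition $\m=\langle L^1,L^2,L^4,L^5\rangle$, $\h=\langle L^3,L^6\rangle$, the same appeal to the geodesic Lemma~\ref{PRR}, and exactly the system~\eqref{KLJH} (your four equations agree term by term with the paper's, including the normalisation $r=\sqrt{\alpha/\beta}$). Three sub-steps differ in a way worth recording. First, for the failure of natural reductivity the paper writes out $[X_1,X_3]$ for general elements of $\m$ and derives the unsolvable condition~\eqref{CRRT}, whereas you use the polarised form of~\eqref{natred} and exhibit the single violating pair $\big\langle [L^2,L^4]_{\m},L^4\big\rangle=-\tfrac{1}{2\sqrt{\alpha}}\neq 0$; this is equivalent and more economical. (Like the paper's ``direct proof'', this only rules out natural reductivity for the chosen decomposition; the unconditional statement rests on the classification in Theorem~\ref{thm2}, which the paper cites just before the proposition.) Second, the paper deduces the non-g.o.\ property from Theorem~\ref{BTHM} (a simply connected g.o.\ space of dimension $\le 5$ is naturally reductive), while you prove it directly from~\eqref{KLJH}: any $X\in\got{g}$ with $X_{\m}\propto L^4$ has $a=b=e=0$, $d\neq 0$, and the second equation forces $0=d^2$, so the geodesic with initial velocity $L^4$ is not homogeneous. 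Your argument is self-contained and does not need the classification theorem, at the cost of being specific to this example. Third, you actually carry out the case analysis solving~\eqref{KLJH} --- the reduction, for $c\neq 0$, to $e\big(rc^2-e^2\big)=0$ and $d\big(d^2-r^2c^2\big)=0$ is correct and reproduces all five lines of Table~\ref{table1} --- whereas the paper only states the solutions. The final assertion about the FC-coordinates is handled the same way in both: the FC-transform realises $\mc{X}^J_1$ as the Riemannian product $\mc{X}_1\times\R^2$ of a symmetric space and a flat factor, hence naturally reductive.
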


\begin{proof}From the commutation relations \eqref{FGHCOM}, \eqref{PQT1}, \eqref{MORCOM}, it is seen that for $\got{g}^J_1(\R)$, we have{\samepage
\begin{gather}\label{MHM}\m=\langle F,G,P,Q\rangle, \qquad \h=\langle H,R\rangle,\end{gather} because $[\m,\h]\subset\m$, i.e., $\mc{X}^J_1$
is a reductive space, cf. Definition \ref{DEF1}.}

But $[\m,\m]\nsubseteq \h$, and $\mc{X}^J_1$ is not a symmetric manifold.

We verify~\eqref{natred} written as
\begin{gather}\label{X2X3}g ([X_1,X_3]_{\m},X_2)+g(X_1,[X_3,X_2]_{\m})=0,\qquad \forall\, X_1,X_2,X_3\in\m.\end{gather}
Instead of \eqref{MHM} we take
\[\m=\langle L^1,L^2,L^4,L^5\rangle, \qquad \h=\langle L^3,L^6\rangle ,\]
where $L^1,\dots,L^3$ ($L^4,\dots,L^6$) are defined in \eqref{1L2L3L},
(respectively~\eqref{4L5L6L}).

We take
\[X_i=a_iL^1+b_iL^2+c_iL^4+d_iL^5, \qquad i=1,2,3, \] and, with the
commutation relations \eqref{FUFUFU}, we find
\[[X_1,X_3]=\frac{1}{2\sqrt{\alpha}}\big[(d_1a_3-a_1d_3+c_1b_3-b_1c_3)L^4+ (c_1a_3-a_1c_3+b_1d_3-d_1b_3)L^5\big].\]
Taking into account that the vector fields $L^1,\dots,L^6$ are orthonormal with respect to the metric~\eqref{MTRTOT} on~$G^J_1$ as in Theorem~\ref{BIGTH}, the
condition~\eqref{X2X3} of the geodesic Lemma~\ref{PRR} reads
\begin{gather}\label{CRRT}c_3(a_2d_1-a_1d_2+b_2c_1-b_1c_2)+d_3(b_1d_2-b_2d_1+a_2c_1-a_1c_2)=0.\end{gather}
The condition \eqref{CRRT} implies that the system of algebraic equations
\begin{gather*}
a_1d_2+b_1c_2 = c_1b_2+d_1a_2,\\
a_1c_2-b_1d_2 = c_1a_2-d_1b_2,
\end{gather*}
must have a solution for any $a_i$, $b_i$, $c_i$, $d_i$, $i=1,2,3$, which is not possible, and $\mc{X}^J_1$ is not naturally reductive with respect to the balanced metric.

Due to Theorem \ref{BTHM}, the four-dimensional manifold $\mc{X}^J_1$ is not a g.o.\ manifold.

We also recall that in \cite[Propositions~3 and~4]{nou} it was proved that under the so called FC-transform, the manifold $\mc{X}^J_n$ is symplectomorph
with $\mc{X}^n\times\C^n$. The particular case of the Jacobi group of degree~1 was reproduced in Proposition~\ref{PRFC} and, in particular, $\mc{X}^J_1$ is equivalent with the symmetric space $\mc{X}_1\times \C$, which is naturally reductive, as in Theorem~\ref{thm2}.

To find the geodesic vectors on the Siegel--Jacobi upper half-plane $\mc{X}^J_1$, we look for the solution~\eqref{XLP} that verifies the condition~\eqref{BCOND} of the geodesic lemma expressed in Proposition~\ref{PRR}. Taking
\[\m\ni Y= a_1L^1+b_1L^2+d_1L^4+e_1L^5,\]
the condition \eqref{BCOND}
\begin{gather*}
a_1\left(\frac{bc}{\sqrt{\beta}}+\frac{ed}{\sqrt{\alpha}}\right)+\frac{b_1}{2}\left[-\frac{ac}{\sqrt{\beta}}+\frac{1}{\sqrt{\alpha}}\big(d^2-e^2\big)\right]\\
\qquad {}-\frac{d_1}{2\sqrt{\alpha}}(bd+ec+ae) +\frac{e_1}{2}\left[\frac{cd}{\sqrt{\beta}}+\frac{1}{\sqrt{\alpha}}(be-ad)\right]=0
\end{gather*}
must be satisfied for every values of $a_1$, $b_1$, $d_1$, $e_1$, i.e., the coefficients of the geodesic vector~\eqref{XLP} are solutions of the system of algebraic equations
\begin{gather}
 rbc+de=0,\nonumber\\
 -rac+ d^2-e^2=0,\nonumber\\
 bd+e(a+c)=0,\nonumber\\
rcd+be-ad=0.\label{KLJH}
\end{gather}
The solutions of the system \eqref{KLJH} are written in Table~\ref{table1}.
\end{proof}

\appendix

\section{Naturally reductive spaces}\label{ISO}

\subsection{Fundamental vector fields}
A {\it homogeneous space} is a manifold $M$ with a transitive action of a Lie group $G$. Equivalently, it is a manifold of the form $G/H$, where $G$ is a Lie group and $H$ is a closed subgroup of $G$, cf., e.g., \cite[p.~67]{aa}.

Let $(M,g)$, $(N,g')$ be Riemannian manifolds. An {\it isometry} is a~diffeomorphism $f\colon M\rightarrow N$ that preserves the metric, i.e., $g_p(u,v)=g'_{f(p)}({\rm d} f_p(u),{\rm d} f_p(v))$, $\forall\, p\in M$, $\forall\, u,v\in TM_p$. If $(M,g)$ is a~Riemannian manifold, the set $I(M,g)$ (or $I(M)$) of all isometries $M\rightarrow M$ forms a group called {\it the isometry group} of~$M$.

A {\it Riemannian homogenous space} is a Riemannian manifold $(M,g)$ on which the isometry group $I(M)$ acts transitively. A~Riemannian manifold $(M,g)$ is a $G$-{\it homogenous} (or {\it homogenous under a~Lie group}~$G$) if $G$ is a closed subgroup of $I(M,g)$ which acts transitively on $M$, cf.\ \cite[p.~178]{bes}.

Let $G$ be a Lie group of transformations acting on the manifold $M$, cf.\ \cite[Chapter~II, Section~3, p.~121]{helg}. In \cite[p.~122]{helg} it is introduced the notion of vector field on~$M$ {\it induced by the one parameter subgroup} $\exp tX$, $t\in\R$, $X\in\g$, denoted $X^+$, where $\g$ is the Lie algebra of~$G$.
In \cite[Section~5, p.~51]{kn1}, in the context of principal fibre bundle $P(M,G)$ over $M$ with structure group the Lie group~$G$, it is introduced the same notion under the name {\it fundamental vector field associate to} $X\in\g$, denoted $X^*$, see also \cite[Proposition~4.1, p.~42]{kn1}.

Let $M=G/H$ be a homogeneous $n$-dimensional manifold and let us suppose that $G$ acts transitively on the {\it left} on $M$, $G\times M\rightarrow M\colon g\cdot
x=y$, where $y=(y_1,\dots,y_n)^t$. Then $g(t)\cdot x =y(t)$, where $g(t)=\exp(tX)$, $t\in\R$, generates a curve in $M$ with $y(0)=x$ and $\dot{y}(0)=X$. The fundamental vector field attached to $X\in\got{g}$ at $x\in M$ is defined as
\begin{gather*}
X^*_x:=\frac{{\rm d}}{{\rm d} t}y(t)\Big|_{t=0}=\frac{{\rm d}}{{\rm d} t}(\exp(tX)\cdot x)\Big|_{t=0}.
\end{gather*}
We write the fundamental vector field attached to $X\in\g$ as
\begin{gather*}
X^*_x=\sum_{i=1}^n(X^*_i)_x\frac{\pa }{\pa z_i}, \qquad (X^*_i)_x=\frac{{\rm d} y_i(t)}{{\rm d} t}\Big|_{t=0}. \end{gather*}

Now, because $[X^*,Y^*]=-[X,Y]^*$, see, e.g., in \cite[Theorem~3.4, p.~122]{helg}, it is observed
\begin{Lemma}\label{MIC} If the generators $X_1,\dots,X_n$ of a Lie algebra $\got{g}$ verify the commutations relations
\begin{gather}\label{XIXJ}
[X_i,X_j]=c^k_{ij}X_k,\end{gather} then the
associated fundamental vector fields verify the commutation relations
\begin{gather*}
[X_i^*,X_j^*]=-c^k_{ij}X_k^*.\end{gather*}
\end{Lemma}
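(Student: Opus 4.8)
The plan is to reduce the assertion to the single bracket-reversal identity $[X^*,Y^*]=-[X,Y]^*$ that has just been recalled (from \cite[Theorem~3.4, p.~122]{helg}), combined with the $\R$-linearity of the assignment $X\mapsto X^*$. Once both ingredients are in hand, the conclusion is a one-line computation on a basis, with no further geometric input required.

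First I would check that $X\mapsto X^*$ is $\R$-linear. This is immediate from the definition $X^*_x=\frac{{\rm d}}{{\rm d} t}(\exp(tX)\cdot x)|_{t=0}$: since $t\mapsto\exp(tX)$ has derivative $X$ at $t=0$ depending linearly on $X\in\got{g}$, the chain rule applied to the (fixed) action map gives $(aX+bY)^*=aX^*+bY^*$ pointwise on $M$ for all $a,b\in\R$ and $X,Y\in\got{g}$. With this linearity and the identity above, I then compute, using the Einstein convention for the summation over $k$,
\[
[X_i^*,X_j^*]=-[X_i,X_j]^*=-\big(c^k_{ij}X_k\big)^*=-c^k_{ij}X_k^*,
\]
where the first equality is the cited bracket-reversal property, the second substitutes the hypothesis \eqref{XIXJ}, and the third uses the linearity just established. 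This is precisely the claimed relation.

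I do not expect a genuine obstacle here: the entire substance of the statement is carried by the identity $[X^*,Y^*]=-[X,Y]^*$, which is taken as known. The only point demanding a moment's care is the linearity of the starred map, needed to pull the structure constants $c^k_{ij}$ out of $(\,\cdot\,)^*$; this is routine but should be stated explicitly so that the passage from the coordinate-free identity to its form on a basis is rigorous. Were one to forgo the citation, the real work would be proving $[X^*,Y^*]=-[X,Y]^*$ itself, by relating the flows of the fundamental vector fields to one-parameter subgroups of $G$; the present Lemma merely specializes that result to a chosen basis of $\got{g}$.
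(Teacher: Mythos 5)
Your argument is correct and is exactly the paper's route: the Lemma is stated there as an immediate observation from the cited identity $[X^*,Y^*]=-[X,Y]^*$, and your proposal merely makes explicit the (routine) $\R$-linearity of $X\mapsto X^*$ needed to pull the structure constants out. Nothing further is required.
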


Note that if the action of $G$ on $M$ is on the right as in \cite[p.~51]{kn1}, then
\begin{gather*}
[X_i^*,X_j^*]=c^k_{ij}X_k^*.\end{gather*}

\subsection{Killing vectors}\label{KLVF}
A vector field $X\in \got{D}^1(M)$ on a Riemannian manifold $(M,g)$ is called an {\it infinitesimal isometry} or a {\it Killing vector field} if the local 1-parameter group of local transformations by $X$ in a~neighbourhood of each point of $M$ consists of local isometries, see also in \cite[Proposition~3.2, p.~237]{kn1}, i.e.,
 \begin{gather}\label{LX}L_Xg=0,\qquad X\in\got{D}^1(M),\end{gather} where $L_X$ is the Lie derivative on $M$.

We recall below in Lemma \ref{YANO} the {\it{Killing equations}} \eqref{kill}, see, e.g., \cite[Theorem 1.3, p.~5]{YK} or \cite[equation~(40$'$), p.~247]{vr}. We use the tensor notation as in \cite{vr,YK}.

Let us consider a $n$-dimensional Riemannian manifold $(M,g)$ and a~vector field with the {\it contravariant} components $X^i$, $i=1,\dots,n$:
\begin{gather}\label{SUMXX}X=\sum_{i=1}^nX^i\frac{\pa }{\pa x^i}.\end{gather}
If $\nabla$ denotes the covariant derivative, we have the standard formulas
\begin{subequations}\label{PAL}
\begin{gather}
\nabla_{\mu}g_{\lambda\chi} :=\pa_{\mu}g_{\lambda\chi}-\Gamma^{\rho}_{\mu\lambda}g_{\rho\chi}-\Gamma^{\rho}_{\mu\chi}g_{\lambda\rho},\label{PAL1}\\
\nabla_{\mu}X^{\chi} :=\pa_{\mu}X^{\chi}+\Gamma^{\chi}_{\mu\lambda}X^{\lambda},\label{PAL2}\\
\nabla_{\mu}X_{\chi}:=\pa_{\mu}X_{\chi}-\Gamma^{\lambda}_{\mu\chi}X_{\lambda},\label{PAL3}\\
X_{\mu} : =g_{\mu\lambda}X^{\lambda\label{PAL4}},\\
L_Xg_{\lambda\chi}: =X^{\mu}\pa_{\mu}g_{\lambda\chi}+g_{\rho\chi}\pa_{\lambda}X^{\rho} +g_{\lambda\rho}\pa_{\chi}X^{\rho}.\label{LG}
\end{gather}
\end{subequations}
\begin{Lemma}\label{YANO} Let $(M,g)$ be a $n$-dimensional Riemannian manifold with a Riemannian $($metric$)$ connection. The field~$X$ is a~Killing vector field if and only if its covariant components $X_{\mu}$, $\mu =1,\dots,n$ verify the Killing equations
\begin{gather}\label{kill}
\nabla_{\lambda}X_{\chi}+\nabla_{\chi}X_{\lambda} := \frac{\pa X_{\chi}}{\pa x^{\lambda}}+\frac{\pa X_{\lambda}}{\pa x^{\chi}}-2\Gamma^{\rho}_{\lambda\chi}X_{\rho}=0, \qquad \lambda, \chi =1,\dots, n. \end{gather}
\end{Lemma}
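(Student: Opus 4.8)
The plan is to derive the coordinate identity $L_Xg_{\lambda\chi}=\nabla_{\lambda}X_{\chi}+\nabla_{\chi}X_{\lambda}$ and then invoke the definition \eqref{LX} of a Killing field as $L_Xg=0$. Starting from the explicit expression \eqref{LG} of the Lie derivative,
\[L_Xg_{\lambda\chi}=X^{\mu}\pa_{\mu}g_{\lambda\chi}+g_{\rho\chi}\pa_{\lambda}X^{\rho}+g_{\lambda\rho}\pa_{\chi}X^{\rho},\]
the idea is to trade every partial derivative for a covariant one. The whole computation rests on the two defining properties of the Riemannian (Levi-Civita) connection: it is torsion-free, so the Christoffel symbols are symmetric, $\Gamma^{\rho}_{\mu\nu}=\Gamma^{\rho}_{\nu\mu}$, and it is metric, so $\nabla_{\mu}g_{\lambda\chi}=0$ by \eqref{PAL1}.

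First I would insert $\pa_{\mu}X^{\rho}=\nabla_{\mu}X^{\rho}-\Gamma^{\rho}_{\mu\sigma}X^{\sigma}$ (from \eqref{PAL2}) and $\pa_{\mu}g_{\lambda\chi}=\Gamma^{\rho}_{\mu\lambda}g_{\rho\chi}+\Gamma^{\rho}_{\mu\chi}g_{\lambda\rho}$ (the latter read off from metric compatibility) into the three terms above. Collecting the Christoffel contributions, the term $-X^{\sigma}\Gamma^{\rho}_{\lambda\sigma}g_{\rho\chi}$ coming from $g_{\rho\chi}\pa_{\lambda}X^{\rho}$ cancels $X^{\sigma}\Gamma^{\rho}_{\sigma\lambda}g_{\rho\chi}$ coming from $X^{\mu}\pa_{\mu}g_{\lambda\chi}$ precisely because $\Gamma^{\rho}_{\lambda\sigma}=\Gamma^{\rho}_{\sigma\lambda}$; the same happens with the $\chi$-labelled pair. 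This leaves
\[L_Xg_{\lambda\chi}=g_{\rho\chi}\nabla_{\lambda}X^{\rho}+g_{\lambda\rho}\nabla_{\chi}X^{\rho}=\nabla_{\lambda}X_{\chi}+\nabla_{\chi}X_{\lambda},\]
where in the last equality I move $g$ inside $\nabla$ using $\nabla g=0$ together with \eqref{PAL4}, so that $g_{\rho\chi}\nabla_{\lambda}X^{\rho}=\nabla_{\lambda}(g_{\rho\chi}X^{\rho})=\nabla_{\lambda}X_{\chi}$, and likewise for the second term.

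It then remains only to put the right-hand side in the stated explicit form. Expanding both covariant derivatives of the covector components by \eqref{PAL3} gives $\nabla_{\lambda}X_{\chi}+\nabla_{\chi}X_{\lambda}=\pa_{\lambda}X_{\chi}+\pa_{\chi}X_{\lambda}-(\Gamma^{\rho}_{\lambda\chi}+\Gamma^{\rho}_{\chi\lambda})X_{\rho}$, and the symmetry of the Christoffel symbols collapses the last bracket to $2\Gamma^{\rho}_{\lambda\chi}X_{\rho}$, reproducing \eqref{kill}. The equivalence ``$X$ Killing $\Longleftrightarrow$ \eqref{kill}'' is then immediate from $L_Xg=0$. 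The only genuinely delicate point is the bookkeeping in the cancellation step: one must verify that the two pairs of Christoffel terms really match up, and this is exactly where torsion-freeness enters — the hypothesis ``Riemannian (metric) connection'' is precisely what supplies both the symmetry of $\Gamma$ and the vanishing of $\nabla g$ on which the argument relies.
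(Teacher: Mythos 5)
Your proof is correct: the cancellation of the Christoffel terms via symmetry of $\Gamma$, the use of $\nabla g=0$ to pass from $g_{\rho\chi}\nabla_{\lambda}X^{\rho}$ to $\nabla_{\lambda}X_{\chi}$, and the final expansion via \eqref{PAL3} all check out. Note that the paper itself gives no proof of this lemma -- it simply recalls the result with citations to Yano and Vranceanu -- and your argument is precisely the standard derivation found in those sources, so there is nothing to contrast.
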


\begin{Remark}\label{RMM1}If the coordinate $x^{\alpha}$ is not present in the expression of metric tensor $g_{\lambda\chi}$, $\lambda,\chi=1,\dots,n$, then $\frac{\pa}{\pa x^{\alpha}}$ is a Killing vector field for the metric $g_{\lambda\chi}$.
\end{Remark}

With \eqref{LG}, the condition \eqref{LX} of a vector field~\eqref{SUMXX} to be a~Killing vector field is that its {\it contravariant} components to verify the equations
\begin{gather}\label{LG1}X^{\mu}\pa_{\mu}g_{\lambda\chi}+g_{\mu\chi}\pa_{\lambda}X^{\mu}+g_{\lambda\mu}\pa_{\chi}X^{\mu}=0, \qquad \lambda,\chi,\mu =1,\dots, \dim{M}=n.\end{gather}
The system \eqref{LG1} of $n(n+1)/2$ equations of a Killing vector field $X^1(x),\dots,X^n(x)$ is overdetermined, and {\it no-nonvanishing solution is guaranteed, in general}. The set $\iota(M)$ of all Killing vector fields on $n$-dimensional manifold $M$ forms a Lie algebra of dimension not exceeding $\frac{n(n+1)}{2}$ and $\dim (\iota(M))=\frac{n(n+1)}{2}$ is obtained only for spaces of constant curvature, see \cite[Theorem~3.3, p.~238]{kn1}. For example, maximal solution is
obtained for the (pseudo)-Euclidean spaces $E^{r,n-r}$, for the sphere~$S^n$ or the real projective space $RP^n=S^n/(\pm I)$, see, e.g., \cite[Theorem~1, p.~308]{kn1}, \cite[p.~251]{vr} and \cite[Section~4.6.6, p.~83]{mari}. We have $\iota(E^{r,s})=\got{so}(r,s)\ltimes \R^{r+s}$. The Euclidean group~$E^n$ of~$\R^n$ has dimension~$n(n+1)/2$, where $n$ degrees of freedom correspond to translations, the other $n(n-1)/2$ correspond to rotations, see also Proposition~\ref{P9} and Remark~\ref{Rem11} below.

The following remark is very important for the determination of Killing vector fields on Riemannian homogeneous manifolds, see, e.g., see
 \cite[p.~4]{btv} or \cite[Proposition~2.2, p.~139]{koda}:
\begin{Remark}\label{RemF} If $(M,g)$ is a Riemannian homogeneous space $M=G/H$ endowed with a~$G$-invariant Riemannian metric $g$, then each $X\in\got{g}$ generates a
 one-parameter subgroup of the group $I(M)$ of isometries ({\it motions}) of~$M$ via $p\rightarrow (\exp tX)\cdot p$. Hence {\it the fundamental vector field~$X^*$ on a~Riemannian homogeneous manifold is a~Killing vector}. {\it For Riemannian homogeneous spaces} $M=G/H$, $\dim (I(M))=\dim (G)$.
\end{Remark}

\subsection{Reductive homogeneous spaces}

The following notions are standard, see \cite[pp.~121, 123 and 125]{helg} or \cite[p. 155]{kn1} and \cite[p.~187]{kn}; see also~\cite{ale,En,Oni}.

The set of elements $G_x$ of a given group $G$, acting on a set~$M$ as group of transformations that leaves the point~$x$ fixed, is called {\it isotropy group}, also called {\it stationary group} or {\it stabilizer}. If~$G$ is a Lie group and~$H$ is a closed subgroup, then the coset space $G/H$, in particular, $H=G_x$ is taken with the analytic structure given in \cite[Theorem~4.2, p.~123]{helg}. For $x\in G$, the diffeomorphism of $G/H$ into itself is $\tau(x)\colon yH\rightarrow xyH$. The natural representation of the isotropy group of a differentiable transformation group in the tangent space to the underling manifold is called {\it isotropy representation}. If $G$ is the group of differentiable transformations on the manifold~$M$ and~$G_x$ is the corresponding isotropy subgroup at the point $x\in M$, then the isotropy representation ${\rm Is}_x\colon
G_x\to{\rm GL} (T_xM)$ associates to each $h\in G_x$ the differential ${\rm Is}_x(h):=({\rm d}\tau (h)) _{\lambda (H)}$ of the transformation $h$ at~$x$, where $\lambda\colon G\rightarrow G/H$ is the canonical projection. The image of the isotropy representation, ${\rm Is}_x(G_x)$, is called the {\em linear isotropy group} at~$x$.

If $G$ is a Lie group with a countable base acting transitively and smoothly on $M$, then {\em the tangent space $T_xM$ can be naturally identified with the space $\g/\g_x$}, where $\g\supset \g_x$ are respectively the Lie algebras of the groups $G\supset G_x$. The isotropy representation ${\mbox{\rm{Is}}}_x$ is now identified with the representation $G_x\to {\mbox{\rm{ GL}}}(\g/\g_x)$, induced by the restriction of the adjoint representation ${\rm Ad}_G$ of $G$ to $G_x$. See details below in Lemma~\ref{LMN}.

\begin{deff}[cf.\ Nomizu \cite{nomizu}]\label{DEF1} A homogeneous space $M=G/H$ is {\em reductive} if the Lie algebra $\g$ of $G$ may be decomposed into a vector space direct sum of the Lie algebra $\h$ of $H$ and an $\Ad (H)$-invariant subspace $\m$, that is
\begin{subequations}
\begin{gather}\label{sum1}
\g = \h + \m, \qquad \h\cap\m =\varnothing,\\
\label{sum2} \Ad(H)\m \subset \m.
\end{gather}
Condition (\ref{sum2}) implies
\begin{gather}\label{sum3}
[\h ,\m ]\subset \m
\end{gather}
\end{subequations}
and, conversely, if $H$ is connected, then (\ref{sum3}) implies~(\ref{sum2}). Note that $H$ is always connected if~$M$ is simply connected. The decomposition~(\ref{sum1}) verifying (\ref{sum2}) is called a~{\em $H$-stable decomposition}.
\end{deff}

\begin{Lemma}\label{LMN} If a homogeneous space $M$ is reductive, then $T_xM$ can be identified with $\m$, while ${\mbox{\rm{Is}}}_x$ can be identified with the representation $h\mapsto (\Ad_G h)|_{\m}$. In this case, the isotropy representation is faithful if~$G$ acts effectively.
\end{Lemma}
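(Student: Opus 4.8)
The plan is to realise all three assertions through the canonical projection $\pi\colon G\to G/H$, $\pi(g)=gH$, taking the base point to be $x=\pi(e)=eH$ so that $H=G_x$. First I would identify $T_xM$ with $\m$. Since $H$ is closed, $\pi$ is a submersion and $d\pi_e\colon\g\to T_xM$ is surjective with kernel $T_eH=\h$; hence $d\pi_e$ descends to a linear isomorphism $\g/\h\to T_xM$. The reductive decomposition $\g=\h+\m$ with $\h\cap\m=\varnothing$ means the composite $\m\hookrightarrow\g\to\g/\h$ is an isomorphism, so $d\pi_e|_{\m}\colon\m\to T_xM$ is the desired identification.

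For the second assertion I would exploit equivariance under conjugation. For $h\in H$ the transformation $\tau(h)\colon gH\mapsto hgH$ fixes $x$, and writing $C_h\colon G\to G$, $C_h(g)=hgh^{-1}$, one has $\pi\circ C_h=\tau(h)\circ\pi$ because $h^{-1}\in H$. Differentiating at $e$ and using $d(C_h)_e=\Ad_G(h)$ gives $\mathrm{Is}_x(h)\circ d\pi_e=d\pi_e\circ\Ad_G(h)$. By reductivity $\Ad(H)\m\subset\m$, so for $v\in\m$ we get $\mathrm{Is}_x(h)(d\pi_e v)=d\pi_e(\Ad_G(h)v)$ with $\Ad_G(h)v\in\m$; under the identification of the first step this says precisely that $\mathrm{Is}_x(h)$ is $(\Ad_G h)|_{\m}$.

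Finally, for faithfulness I would compute the kernel of $\mathrm{Is}_x$. By the previous step, $h\in\ker(\mathrm{Is}_x)$ iff $\Ad_G(h)|_{\m}=\mathrm{id}$, i.e.\ $\tau(h)$ fixes $x$ with trivial differential there. The ineffective kernel $N=\{g\in G:\tau(g)=\mathrm{id}_M\}=\bigcap_{g\in G}gHg^{-1}$ is clearly contained in $\ker(\mathrm{Is}_x)$; the substance is the reverse inclusion. For this I would invoke the canonical $G$-invariant affine connection carried by any reductive homogeneous space (its geodesics through $x$ are the curves $t\mapsto\exp(tX)\cdot x$, $X\in\m$): each $\tau(h)$ is an affine transformation for this connection, and an affine transformation of a connected manifold that fixes a point and has identity differential there must be the identity. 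Hence $\ker(\mathrm{Is}_x)\subseteq N$, so the two coincide, and when $G$ acts effectively $N=\{e\}$ forces $\mathrm{Is}_x$ to be injective.

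The main obstacle is exactly this last inclusion $\ker(\mathrm{Is}_x)\subseteq N$: unlike the two identifications, which are formal consequences of differentiating $\pi$ and the equivariance relation, it genuinely uses a rigidity input (an affine map is determined by its $1$-jet at a point, valid on a connected $M$). Without reductivity this can fail---e.g.\ a parabolic quotient such as $\mathbb{RP}^1={\rm PSL}(2,\R)/B$ is effective yet has non-faithful linear isotropy, the unipotent part of $B$ acting trivially on $T_xM$---so it is the reductive structure, encoded in the invariant connection, that makes the argument go through.
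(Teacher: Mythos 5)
Your proof is correct. Note that the paper offers no argument for this lemma: it appears in Appendix~\ref{ISO} as recalled standard background, with pointers to Helgason and Kobayashi--Nomizu, so there is no in-paper proof to compare against. Your route is the classical one from those sources. The identification $T_xM\cong\g/\h\cong\m$ uses only the direct-sum decomposition~\eqref{sum1}; the computation $\mathrm{Is}_x(h)=(\Ad_G h)|_{\m}$ follows formally from differentiating the equivariance $\pi\circ C_h=\tau(h)\circ\pi$ at $e$, with the $\Ad(H)$-invariance~\eqref{sum2} guaranteeing that $\Ad_G(h)$ preserves $\m$ so that the restriction makes sense; and you correctly locate the only nontrivial content in the faithfulness claim, which you settle by the standard rigidity of affine transformations of the canonical connection (an affine map fixing a point with identity differential is the identity). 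Two small remarks. First, that rigidity argument requires $M$ to be connected -- the set where $\tau(h)$ has trivial $1$-jet is open and closed -- and you should state this hypothesis explicitly, since the lemma and its sources leave it implicit but your proof genuinely uses it; on a disconnected $M$ the conclusion $\ker(\mathrm{Is}_x)\subseteq N$ would need a separate argument on the other components. Second, your closing counterexample $\mathbb{RP}^1=\mathrm{PSL}(2,\R)/B$, where the unipotent part of the Borel acts trivially on the tangent line at the base point while the action is effective, is accurate and correctly isolates the reductive structure (equivalently, the existence of the invariant connection) as the essential input for faithfulness rather than effectiveness alone.
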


So let us denote by $x(s)$ the $1$-parameter subgroup of $G$ generated by $X\in\m$ and let $x^*(s)=\lambda (x(s))$ be the image of $x(s)$ by the projection $\lambda$ of $G$ onto $G/H$:
\[x^*(s)=x(s).o=(\exp sX^*).o.\]
Identifying $X^*$ with $X\in\m$, we can write down
\begin{gather*}
x^*(s)=\exp(sX).o.
\end{gather*}
The invariant tensor fields on a homogeneous space $M$ are in one-to-one correspondence with the tensor fields on $T_xM$ that are invariant with respect to the isotropy representation. In particular, $M$ has an invariant Riemannian metric if and only if $T_xM$ has a~Euclidean metric that is invariant under the linear isotropy group.

In accord with \cite[Proposition 3.1 and Corollary 3.2, p.~200]{kn} and \cite[p.~78]{aa}:
\begin{Proposition}\label{KIKS}
Let $M=G/H$ be a homogenous space where $G$ is a~Lie group acting effectively on~$M$, which is reductive.

The one-to-one correspondence between $G$-invariant indefinite Riemannian metrics~$g$ on $M=G/H$ and $\Ad (H)$-invariant non-degenerate symmetric bilinear forms $B$ on~$\m$
\begin{gather*}
B(X,Y)=B\big({\rm Ad}^{G/H}(h)X, \Ad^{G/H}(h)Y\big),\qquad \forall\, X,Y\in\m, \ h\in H,
\end{gather*}
is given by
\begin{gather}\label{BXY}
B(X,Y)=g(X^*,Y^*)_o, \qquad \text{for} \quad X,Y\in\m,
\end{gather}
\end{Proposition}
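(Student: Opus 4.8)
The plan is to reduce the statement to the general fact recalled just above the proposition, namely that $G$-invariant covariant tensor fields on a homogeneous space $M=G/H$ are in one-to-one correspondence with tensors on $T_oM$ that are invariant under the linear isotropy representation, and then to specialise this correspondence to symmetric bilinear forms of degree two (adding the non-degeneracy condition that singles out metrics). First I would fix the base point $o=eH$ and, using reductivity together with Lemma~\ref{LMN}, identify $T_oM$ with $\m$ in such a way that the linear isotropy representation becomes $h\mapsto\Ad^{G/H}(h)=\Ad(h)|_{\m}$. Under this identification a $G$-invariant metric is completely determined by its value $g_o$ at $o$, since the metric at any other point is obtained by transport along the transitive $G$-action.

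For the forward direction, given a $G$-invariant (indefinite Riemannian) metric $g$ I would set $B(X,Y):=g(X^*,Y^*)_o$ as in~\eqref{BXY}, which is simply $g_o$ read off through $T_oM\cong\m$. Symmetry and non-degeneracy of $B$ are inherited at once from the corresponding properties of $g_o$. The $\Ad(H)$-invariance is the key point here: every $h\in H$ fixes $o$, so it acts on $T_oM$ by its differential, which under the identification is precisely $\Ad^{G/H}(h)$; since $g$ is $G$-invariant it is in particular $H$-invariant, and therefore $g_o(X,Y)=g_o\big(\Ad^{G/H}(h)X,\Ad^{G/H}(h)Y\big)$, which is exactly the asserted invariance of $B$.

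For the converse I would reconstruct $g$ from $B$ pointwise: put $g_o:=B$ through $T_oM\cong\m$, and at an arbitrary $p=\tau(a)\cdot o$ define $g_p(u,v):=g_o\big(({\rm d}\tau(a))^{-1}u,({\rm d}\tau(a))^{-1}v\big)$, i.e.\ transport $g_o$ by the differential of the diffeomorphism $\tau(a)$. The hard part, and the step I would treat most carefully, is well-definedness of this prescription. If $p=\tau(a)\cdot o=\tau(a')\cdot o$ then $a'=ah$ for some $h\in H$, and the two candidate values for $g_p$ differ exactly by the linear isotropy action $\Ad^{G/H}(h)$ in the $T_oM$ slot; hence they coincide if and only if $g_o=B$ is $\Ad(H)$-invariant. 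This is precisely where the reductive hypothesis and the invariance of $B$ are indispensable. Granting well-definedness, $G$-invariance of the resulting $g$ is built into the construction, smoothness follows from smoothness of the $G$-action on $M=G/H$, and the two assignments $g\mapsto B$ and $B\mapsto g$ are manifestly mutually inverse, which establishes the claimed bijection~\eqref{BXY}.
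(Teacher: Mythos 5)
The paper offers no proof of Proposition~\ref{KIKS}: it is quoted from the literature (Kobayashi--Nomizu, Vol.~II, Proposition~3.1 and Corollary~3.2, and Arvanitoyeorgos), so there is no in-paper argument to compare yours against. Your proof is correct and is essentially the standard argument found in those references: identify $T_oM$ with $\m$ using the reductive decomposition (Lemma~\ref{LMN}), observe that the linear isotropy action of $h\in H$ on $T_oM$ is $\Ad(h)|_{\m}$ (this is where \eqref{sum2} is needed so that $\Ad(h)$ actually preserves $\m$), read $g_o$ off as $B$ in one direction, and transport $B$ along the transitive action in the other, with $\Ad(H)$-invariance being exactly the well-definedness condition when $p=\tau(a)\cdot o=\tau(ah)\cdot o$. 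Two small points you could make explicit to tighten the write-up: first, formula \eqref{BXY} is phrased through fundamental vector fields, so one should note that for $X\in\m$ the value $X^*_o$ is precisely the image of $X$ under the identification $T_oM\cong\m$ (it is the differential at $t=0$ of $t\mapsto\exp(tX)\cdot o$), which is what makes $g(X^*,Y^*)_o$ the same thing as ``$g_o$ read through the identification''; second, smoothness of the reconstructed metric is most cleanly justified by pulling back along a local section of $G\to G/H$ rather than by appeal to ``smoothness of the action'' alone. Neither is a gap, only polish; the effectiveness hypothesis plays no role in the correspondence itself (it only makes the isotropy representation faithful).
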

Explicitly, the $\Ad(H)$ invariance of the symmetric non-degenerate form $B$ in \eqref{BXY} means, see, e.g., \cite[p.~201]{kn}
\begin{gather}\label{INVBXY}
B(X,[Z,Y])+B([Z,X],Y)=0,\qquad X,Y\in\got{m},\qquad Z\in\got{h}.
\end{gather}
Usually it is asked that the group of isometries $G$ acts {\em effectively} on~$M$, cf.~\cite{atri}.

The {\em canonical connection}, see \cite[p.~192]{kn}, or {\it canonical affine connection of second type}, see~\cite{nomizu}, on the reductive space $M=G/H$ verifying (\ref{sum1}), (\ref{sum2}), is the unique $G$-invariant affine connection on~$M$ such that for any vector field $X\in\m$ and any frame $u$ at the point $o$, the curve $(\exp tX)u$ in the principal fibration of frames over $M$ is horizontal. The canonical connection is complete and the set of its geodesics through $o$ coincides with the set of curves of the type $(\exp tX)o$, where $X\in\m$, see also \cite[Proposition~2.4 and Corollary 2.5, p.~192]{kn}. In a reductive space there is a unique $G$-invariant affine connection with zero torsion having the same geodesics as the canonical connection, cf.\ \cite[Theorem~2.1, p.~197]{kn}. This connection is called in~\cite{kn} {\em natural torsion-free connection} on $M=G/H$ relative to the decomposition~(\ref{sum1}), or {\it canonical affine connection of the first kind} in~\cite{nomizu}.

\subsection{Naturally reductive spaces}
For the next definition and \eqref{natred} below, see in \cite[Chapter~II, Section~13, {\it metric connections}]{nomizu}, \cite[p.~202]{kn} and~\cite{atri},
\begin{deff}\label{DEF2} A homogeneous Riemannian or pseudo-Riemannian space $M=G/H$ is {\it naturally reductive} if it is reductive, i.e., it verifies (\ref{sum1}), (\ref{sum2}), and
\begin{gather}\label{natred}
B(X,[Z,Y]_{\m})+B([Z,X]_{\m},Y)=0,\qquad X,Y, Z\in\m ,
\end{gather}
where $B$ is the non-degenerate symmetric bilinear form on $\m$ induced by the Riemannian (pseudo-Riemannian) structure on $M$ under the natural identification of the spaces $\m$ and $M_o$, as in~\eqref{BXY}.
\end{deff}
If $M=G/H$ is a naturally reductive Riemannian or pseudo-Riemannian space verifying (\ref{sum1}), (\ref{sum2}), and (\ref{natred}), then the natural torsion-free connection coincides with the corresponding Riemannian or pseudo-Riemannian connection on $M$ \cite{ale}.

Based on \cite[Theorem~5.4]{as}, \cite[Chapter~X, Section~3]{kn}, \cite[Theorem~6.2, p.~58]{tv} and \cite[Proposition~1, p.~5]{btv}, it is formulated the following
\begin{Proposition}\label{PR5}
Let $(M,g)$ be a homogeneous Riemannian manifold. Then $(M,g)$ is a naturally reductive Riemannian homogenous space if and only if there exists a connected Lie subgroup~$G$ of $I(M)$ acting transitively and effectively on~$M$ and a reductive decomposition~\eqref{sum1}, such that one of the following equivalent statements hold:
\begin{enumerate}\itemsep=0pt
\item[$(i)$] \eqref{natred}, or
\begin{gather*}
g([X,Z]_{\m},Y)+g(X,[Z,Y]_{\m})=0\qquad \forall\, X,Y,Z \in \m,
\end{gather*} is verified;
\item[$(ii)$] the Levi-Civita connection of $(M,g)$ and the natural torsion-free connection with respect to the decomposition~\eqref{sum1} are the same;
\item[$(iii)$] $(*)$ is true, i.e., every geodesic in $M$ is the orbit of a one-parameter subgroup of $I(M)$ generated by some $X\in\m$.
\end{enumerate}
\end{Proposition}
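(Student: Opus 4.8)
The plan is to fix a connected subgroup $G\subset I(M)$ acting transitively and effectively on $M$ together with a reductive decomposition $\g=\h+\m$ satisfying \eqref{sum1}, \eqref{sum2}, and to prove the equivalence of the three conditions (i)--(iii) for this fixed data; natural reductivity of $(M,g)$ is then equivalent to the existence of such $G$ and $\m$, since (i) is verbatim the defining condition \eqref{natred} of Definition~\ref{DEF2}. The main tool is the Nomizu description of the Levi-Civita connection $\nabla$ of the invariant metric. Identifying $T_oM$ with $\m$ as in Lemma~\ref{LMN}, one has for $X,Y\in\m$
\[(\nabla_XY)_o=\tfrac12[X,Y]_{\m}+U(X,Y),\]
where the symmetric bilinear map $U\colon\m\times\m\to\m$ is determined by
\[2B(U(X,Y),Z)=B([Z,X]_{\m},Y)+B(X,[Z,Y]_{\m}),\qquad Z\in\m,\]
with $B$ the $\Ad(H)$-invariant form attached to $g$ by \eqref{BXY}. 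I recall that the natural torsion-free connection $\wt\nabla$ relative to \eqref{sum1} is the unique $G$-invariant torsion-free connection sharing its geodesics with the canonical connection, namely the curves $t\mapsto(\exp tX)\cdot o$, $X\in\m$; its value at the origin is $(\wt\nabla_XY)_o=\tfrac12[X,Y]_{\m}$.

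First I would establish (i)$\Leftrightarrow$(ii). The displayed formulas exhibit $\nabla$ and $\wt\nabla$ as two $G$-invariant torsion-free connections whose values at $o$ differ by $U$; by $G$-invariance, coincidence at $o$ is equivalent to coincidence everywhere, hence $\nabla=\wt\nabla$ if and only if $U=0$. Unwinding the defining relation for $U$, the identity $U=0$ reads $B([Z,X]_{\m},Y)+B(X,[Z,Y]_{\m})=0$ for all $X,Y,Z\in\m$, which is precisely \eqref{natred}. This proves the equivalence of (i) and (ii).

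Next I would prove (ii)$\Leftrightarrow$(iii). If $\nabla=\wt\nabla$, then the metric geodesics through $o$ are the $\wt\nabla$-geodesics $t\mapsto(\exp tX)\cdot o$, $X\in\m$; since $G$ acts transitively by isometries, every geodesic of $(M,g)$ is the $G$-translate of one of these, hence the orbit of a one-parameter subgroup $\exp tX$ with $X\in\m$, which is (iii). Conversely, assuming (iii), the metric geodesics through $o$, affinely parametrized, are exactly the curves $(\exp tX)\cdot o$ with $X\in\m$, and these are also the $\wt\nabla$-geodesics. Two torsion-free connections with the same affinely parametrized geodesics have vanishing difference tensor $D:=\wt\nabla-\nabla$: torsion-freeness forces $D$ to be symmetric, while agreement of the geodesics forces $D(X,X)=0$ for every $X$, so $D\equiv 0$ and $\nabla=\wt\nabla$, which is (ii).

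The hard part will be the structural input rather than the equivalences themselves: establishing the Nomizu formula and the existence and uniqueness of $\wt\nabla$ with the stated geodesics are the content of the reductive theory recalled around \eqref{sum1}--\eqref{INVBXY} and can be invoked from \cite{kn,nomizu}. A point to handle with care is the requirement in (iii) that the generating $X$ lie in $\m$ and not merely in $\g$; it is exactly this restriction that separates naturally reductive spaces from the wider class of g.o.\ spaces, and it must be tracked when passing between geodesics and one-parameter orbits. Finally, since the complement $\m$ and the transitive group $G$ are not canonical, the statement is genuinely existential: natural reductivity asserts that \emph{some} effective transitive $G\subset I(M)$ and \emph{some} $\Ad(H)$-invariant $\m$ realize \eqref{natred}, and (i)--(iii) are to be read relative to such a fixed choice.
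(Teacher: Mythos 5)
Your argument is correct, but note that the paper itself offers no proof of Proposition~\ref{PR5}: it is explicitly presented as a formulation assembled from \cite{as}, \cite[Chapter~X, Section~3]{kn}, \cite[Theorem~6.2]{tv} and \cite[Proposition~1]{btv}, so there is no in-paper proof to compare against. What you have written is essentially the standard proof that those references contain: Nomizu's description of the Levi-Civita connection function $\tfrac12[X,Y]_{\m}+U(X,Y)$ with $U$ dual to the left-hand side of \eqref{natred}, giving (i)$\Leftrightarrow$(ii) since $B$ is non-degenerate and two $G$-invariant connections agreeing at $o$ agree everywhere; and the symmetric difference-tensor argument for (ii)$\Leftrightarrow$(iii). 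Both steps are sound, and you correctly isolate the two genuinely delicate points: that the generator in (iii) must lie in $\m$ rather than merely in $\got{g}$ (the g.o.\ distinction), and that the whole statement is existential in $(G,\m)$, with (i) being verbatim \eqref{natred} of Definition~\ref{DEF2}. The one spot where you lean on an unstated fact is in (iii)$\Rightarrow$(ii): to compare $\nabla$ and $\wt\nabla$ via affinely parametrized geodesics you need the orbit $t\mapsto(\exp tX)\cdot o$ to be traversed at constant speed, which holds because $\exp tX$ acts by isometries so that $\|\tfrac{{\rm d}}{{\rm d}t}(\exp tX)\cdot o\|$ is constant; a sentence to this effect would close the argument completely.
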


It is not always easy to decide whether a given homogenous Riemannian space is naturally reductive~\cite{ilka}. The Riemannian manifold $M=G/H$ might be naturally reductive although for any reductive decomposition $\got{g}=\got{h}+\got{m}$ none of the statements in Proposition~\ref{PR5} holds, because that might exist another appropriate subgroup
$\tilde{G}\subset I(M)$ such that $M=\tilde{G}/\tilde{H}$ and with respect to such decomposition the conditions of Proposition~\ref{PR5} are satisfied, see, e.g., \cite[p.~5]{btv}. In accord with \cite[Proposition~2, p.~5]{btv}, {\it a necessary and sufficient condition that a~complete and simply connected manifold be naturally
reductive is that there exists a homogeneous structure~$T$ on~$M$ with $T_*v=0$, for all tangent vectors~$v$ of~$M$}.

Ambrose and Singer found the condition for a Riemannian manifold be locally homogeneous~\cite{as}.

\subsection[Naturally reductive spaces of dimension $\le 4$]{Naturally reductive spaces of dimension $\boldsymbol{\le 4}$}

The connected homogeneous Riemannian $V_n$ naturally reductive spaces of dimension $n\le 6$ are classified.

 For two dimensional manifolds, {\it because the homogeneous manifolds $V_2$ have constant curvature, they are locally symmetric spaces}, see, e.g., in
\cite[Theorem~4.1, Section~4]{tv}.
\begin{Theorem}\label{THM0} The only homogenous structure on $\R^2$ and $S^2$ is given by $T=0$, cf.\ {\rm \cite[Corollary~4.2]{tv}}.

Let $(M,g)$ be a connected and simply connected surface. Then $(M,g)$ admits a~homogenous structure $T\not= 0$ if and only if $(M,g)$ is isomorphic to the hyperbolic plane, cf.\ {\rm \cite[Theorem~4.3]{btv}}.

Up to an isomorphism, $\mc{H}^2$ has only two homogenous structures, namely:
\begin{enumerate}\itemsep=0pt
\item[$1.$] $T=0$, corresponding to the symmetric case $\mc{H}^2={\rm SO}_0(1,2)/{\rm U}(1)$, where ${\rm SO}_0(1,2)=\SL\!/\!{\pm}I$ is the connected component of the identity of the Lorentz group, see also~\eqref{ISO2}.

\item[$2.$] $T_XY=g(X,Y)\xi-g(\xi,Y)X$, $\xi=\xi^1E_1+\xi^2E_2$, $E_1= \frac{1}{r }y^1\frac{\pa}{\pa y^1}$, $E_2= \frac{1}{r }y^1\frac{\pa}{\pa y^2}$,
 \begin{gather}\label{GBELTR}
g=r^2\big(y^1\big)^{-2}\big(\big({\rm d} y^1\big)^2+\big({\rm d} y^2\big)^2\big),\end{gather}
$X,Y\in\got{D}^1(M)$, $r>0$. This homogenous structure corresponds to the Lie algebra $\got{g}$ with the product $(y_1,y_2)(y'_1,y'_2)=(y_1y'_1,y_1y'_2+y_2)$, i.e., the semi-direct product of the multiplicative group $\R_0^+$ and the additive group~$\R$.
\end{enumerate}
\end{Theorem}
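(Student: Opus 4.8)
The plan is to treat the three assertions as consequences of the Ambrose--Singer characterisation of homogeneous Riemannian manifolds together with the Tricerri--Vanhecke classification of homogeneous structures. Recall that a \emph{homogeneous structure} on $(M,g)$ is a $(1,2)$-tensor field $T$ for which the connection $\tilde\nabla=\nabla-T$, with $\nabla$ the Levi--Civita connection, satisfies $\tilde\nabla g=\tilde\nabla R=\tilde\nabla T=0$; by \cite{as} such a $T$ exists on a complete, simply connected $M$ precisely when $M$ is a homogeneous Riemannian space. First I would recall that the $O(n)$-module of admissible tensors (those with $g(T_XY,Z)=-g(T_XZ,Y)$) decomposes as $\mathcal{T}_1\oplus\mathcal{T}_2\oplus\mathcal{T}_3$, and that for $n=\dim M=2$ only the component $\mathcal{T}_1$ survives. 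Hence \emph{every} homogeneous structure on a surface has the form $T_XY=g(X,Y)\xi-g(\xi,Y)X$ for a single vector field $\xi$, which already produces the shape of the tensor displayed in item~2.

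Next I would use that a surface carrying a homogeneous structure is a Riemannian homogeneous space, hence of constant curvature, so locally isometric to one of $S^2$, $\R^2$, $\mc{H}^2$. Substituting the $\mathcal{T}_1$-form of $T$ into the structure equations $\tilde\nabla\xi=0$ and $\tilde\nabla R=0$ forces $|\xi|^2$ to be constant and ties the constant sectional curvature $c$ to $\xi$ by $c=-|\xi|^2\le 0$, with $c=0$ if and only if $\xi=0$. This is the step I expect to carry the weight: it is exactly the Tricerri--Vanhecke result that a nontrivial $\mathcal{T}_1$-structure forces real hyperbolic geometry. Granting it, the sign constraint immediately yields the first assertion -- $S^2$ (where $c>0$) and $\R^2$ (where $c=0$) admit only $\xi=0$, i.e.\ $T=0$, as in \cite[Corollary~4.2]{tv} -- and the second assertion, since $T\neq 0$ means $\xi\neq 0$, hence $c<0$ and $(M,g)\cong\mc{H}^2$, as in \cite[Theorem~4.3]{btv}.

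Finally, for $\mc{H}^2$ itself I would exhibit the two inequivalent structures and argue they exhaust the list. The structure $T=0$ is the Levi--Civita one coming from the symmetric presentation $\mc{H}^2={\rm SO}_0(1,2)/{\rm U}(1)$ with ${\rm SO}_0(1,2)=\SL/{\pm}I$, consistent with Remark~\ref{Rm1} and the identification $\mc{X}_1=\SL/{\rm SO}(2)$. The structure $T\neq 0$ is the $\mathcal{T}_1$-tensor above with $\xi\neq 0$; realising $\mc{H}^2$ in the half-plane model with metric \eqref{GBELTR}, $g=r^2(y^1)^{-2}\big(({\rm d}y^1)^2+({\rm d}y^2)^2\big)$, the field $\xi=\xi^1E_1+\xi^2E_2$ in the orthonormal frame $E_1=r^{-1}y^1\partial_{y^1}$, $E_2=r^{-1}y^1\partial_{y^2}$ is $\tilde\nabla$-parallel, and this presentation corresponds to the simply transitive action of the solvable group $\R_0^+\ltimes\R$ -- the $NA$-factor of the Iwasawa decomposition of $\SL$ -- with product $(y_1,y_2)(y'_1,y'_2)=(y_1y'_1,y_1y'_2+y_2)$. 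I would then note that the only residual freedom in $\xi$ is its $O(2)$-orbit, whose single invariant $|\xi|^2=-c$ is pinned by the curvature; so up to isomorphism $T=0$ and this one structure are the sole possibilities. The one genuinely delicate point, beyond the cited classification, is verifying directly in the frame $E_1,E_2$ that the $\mathcal{T}_1$-tensor on the solvable group satisfies all three Ambrose--Singer equations and is inequivalent to $T=0$.
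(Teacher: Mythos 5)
The paper does not actually prove Theorem~\ref{THM0}: it is stated as a quotation of known classification results, with pointers to [tv, Corollary~4.2] and [btv, Theorem~4.3], and no argument is supplied. Your proposal therefore cannot be compared with an internal proof; what it does is reconstruct, correctly in outline, the Tricerri--Vanhecke argument behind the citations. The skeleton is sound: the pointwise space of admissible tensors $\mathcal{T}_1\oplus\mathcal{T}_2\oplus\mathcal{T}_3$ collapses to $\mathcal{T}_1$ in dimension $2$ (dimension count $n^2(n-1)/2=2=\dim\mathcal{T}_1$), so every homogeneous structure on a surface has the form $T_XY=g(X,Y)\xi-g(\xi,Y)X$; homogeneity forces constant curvature; and the Ambrose--Singer equations pin $\|\xi\|$ to be constant with $c=-\|\xi\|^2$, which kills $\xi$ on $S^2$ and $\R^2$ and leaves exactly the one $O(2)$-orbit of nonzero structures on $\mc{H}^2$. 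Two caveats keep this from being a self-contained proof rather than an annotated citation. First, the step you yourself identify as carrying the weight --- deriving $\|\xi\|=\mathrm{const}$ and $c=-\|\xi\|^2$ from $\tilde\nabla\xi=0$ and $\tilde\nabla R=0$ --- is asserted by appeal to the very theorem being proved; in dimension $2$ this is a short computation (write $\tilde\nabla_XY=\nabla_XY-g(X,Y)\xi+g(\xi,Y)X$, impose $\tilde\nabla g=0$ automatically, then $\tilde\nabla\xi=0$ gives $\nabla_X\xi=g(X,\xi)\xi-\|\xi\|^2X$, whence $X\|\xi\|^2=0$ and the curvature of the induced metric is $-\|\xi\|^2$) and should be carried out to close the loop. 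Second, the verification that the exhibited tensor on the half-plane model \eqref{GBELTR}, with $\xi$ expressed in the frame $E_1$, $E_2$ of the solvable group $\R_0^+\ltimes\R$, satisfies all three Ambrose--Singer equations and is genuinely inequivalent to $T=0$ is flagged but deferred; it is needed for the existence half of the second assertion and for the completeness of the list in the third. With those two computations filled in, your argument is a valid and essentially standard proof of the statement the paper merely quotes.
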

The case $n=3$ was considered by Kowalski~\cite{kw3}. The proof of Theorems~\ref{thm1} and~\ref{thm2} below is based on the Ambrose and Singer theorem in the formulation of \cite[Section~2]{tv} and the classification of 3-dimensional unimodular Lie groups with left-invariant metrics of Milnor~\cite{ml}.

The following theorem is \cite[Theorem~6.5, p.~63]{tv}, \cite[Theorem 2]{bv} or \cite[Theorem~5.2]{ilka}:
\begin{Theorem}\label{thm1} A three-dimensional complete, simply connected naturally reductive
Riemannian manifold $(M, g)$ is either:
\begin{enumerate}\itemsep=0pt
\item[$(a)$] a symmetric space realized by the real forms: $ \R^3$, ${S}^3$ or the Poincar\'e half-space $\mc{H}^3$, and $S^2\times \R$, $\mc{H}^2\times\R$, or
\item[$(b)$] a non-symmetric space isometric to one of the following Lie groups with a suitable left-invariant metric:
\begin{enumerate}\itemsep=0pt
\item[$(b1)$] ${\rm SU} (2)$,
\item[$(b2)$] $\widetilde{{\rm SL}}(2,\R)$, the universal covering of $\SL$, with a special left-invariant metric,
\item[$(b3)$] the $3$-dimensional Heisenberg group $H_1$, where the Heisenberg group has a~left-inva\-riant metric.
\end{enumerate}
\end{enumerate}
\end{Theorem}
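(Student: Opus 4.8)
Since this is a classical structure theorem, the plan is to follow the route signalled in the text: combine the Ambrose--Singer description of homogeneous spaces as formulated in \cite{as,tv} with Milnor's normal form \cite{ml} for left-invariant metrics on three-dimensional unimodular Lie groups. First I would recall the characterisation of natural reductivity by homogeneous structures: a complete, simply connected $(M,g)$ is naturally reductive if and only if it admits a homogeneous structure tensor $T$ of the totally skew-symmetric class $\mc{T}_3$, that is, a $(1,2)$-tensor for which $g(T_XY,Z)$ is alternating in $X,Y,Z$ and $\wt\nabla g=\wt\nabla T=\wt\nabla R=0$, where $\wt\nabla=\nabla-T$ is the associated canonical connection (\cite[Section~2]{tv}, cf.\ \cite[Proposition~2]{btv}). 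The whole problem thereby reduces to determining the admissible tensors $T$ in dimension three.

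The key simplification is dimensional. In dimension three the fibre of totally skew-symmetric $(1,2)$-tensors is one-dimensional, spanned by the vector cross product $\times$ built from $g$ and a chosen orientation; hence $T_XY=c\,(X\times Y)$ for some function $c$, and the parallelism $\wt\nabla T=0$ forces $c$ to be a constant. The classification then splits on whether $c$ vanishes. If $c=0$ then $T=0$, the canonical connection coincides with the Levi--Civita connection, so $\nabla R=0$ and $(M,g)$ is a Riemannian symmetric space. Up to isometry the simply connected three-dimensional symmetric spaces are the constant-curvature forms $\R^3$, $S^3$, $\mc{H}^3$ together with the reducible products $S^2\times\R$ and $\mc{H}^2\times\R$; this is precisely alternative~(a). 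Note that $S^2\times\R$ and $\mc{H}^2\times\R$ enter here, through the symmetric branch, and not through Milnor's list.

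The substantive case is $c\neq0$. Here the Ambrose--Singer reconstruction yields a transitive isometry group with a reductive decomposition $\g=\m\oplus\h$ in which the brackets have the schematic form $[X,Y]_\m=-2c\,(X\times Y)$ and $[X,Y]_\h=-\wt R(X,Y)$, with $\h\subset\got{so}(3)$ acting on $\m$ in the standard way; since $\wt\nabla\wt R=0$ and $M$ is homogeneous, $\wt R$ is an invariant parallel curvature, and substituting back through $\nabla=\wt\nabla+T$ expresses the Levi--Civita sectional curvatures in terms of $c$ and $\wt R$. One then realises $M$ as a three-dimensional unimodular Lie group carrying a left-invariant metric --- the point at which Milnor's theory enters --- and puts the structure constants into the orthonormal normal form $[e_1,e_2]=\lambda_3e_3$, $[e_2,e_3]=\lambda_1e_1$, $[e_3,e_1]=\lambda_2e_2$. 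The naturally reductive identity \eqref{natred}, now with $c\neq0$, forces two of the $\lambda_i$ to coincide (equivalently, an additional axial isometry producing one-dimensional isotropy), and Milnor's sign trichotomy then singles out exactly the patterns $(+,+,+)$, $(+,+,-)$ and $(+,0,0)$, i.e.\ ${\rm SU}(2)$, $\widetilde{{\rm SL}}(2,\R)$ and the Heisenberg group $H_1$.

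The remaining unimodular types --- the abelian group and the solvable groups $\widetilde{E}(2)$ and $E(1,1)$ --- yield no new examples, being either symmetric (hence already in~(a)) or not naturally reductive. I expect this final step to be the main obstacle: establishing rigorously that $c\neq0$ forces the coincidence of two structure constants and excludes the solvable groups requires the careful curvature bookkeeping relating $\wt R$, the constant $c$, and the Milnor constants, rather than any soft argument.
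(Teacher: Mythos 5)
Your argument is correct and follows precisely the route the paper itself indicates: the paper does not prove this theorem but quotes it from \cite{tv}, \cite{bv} and \cite{ilka}, remarking only that the proof rests on the Ambrose--Singer theorem in the Tricerri--Vanhecke formulation together with Milnor's classification \cite{ml} of left-invariant metrics on three-dimensional unimodular Lie groups --- exactly the two ingredients you combine. Your reduction to $T_XY=c\,(X\times Y)$ with $c$ constant and the ensuing dichotomy between $c=0$ (the symmetric list) and $c\neq 0$ (Milnor normal form with two coinciding structure constants, the solvable types yielding nothing new) is the standard Tricerri--Vanhecke argument, so there is nothing substantive to compare beyond noting that you supply details the paper deliberately omits.
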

The Poincar\'e half-space $\mc{H}^n$ is the set $(x_1,\dots,x_n)\in\R^n$, $x_1>0$, with the metric propor\-tio\-nal with
\begin{gather}\label{poinc}
{\rm d} s^2:=x_1^{-2}\sum_{i=1}^n({\rm d} x_i)^2 .
\end{gather}
For the left invariant Riemannian metrics which appear in Theorem~\ref{thm1}, see \cite[Theorem 2]{kw3} and~\cite{vp}. For $H_1=\R^3[x,y,z]$ a
left-invariant metric is
\begin{gather*}
{\rm d} s^2_{H_3}=\frac{1}{b}\big({\rm d} x^2+ {\rm d} z^2+({\rm d} y-x{\rm d} z)^2\big), \qquad b\in \R_+.
\end{gather*}
Note that in \cite[Theorem 1, p.~6]{btv} appear only the non-symmetric naturally reductive spaces of dimensions~3: ${\rm SU}(2)\cong S^3$, $\widetilde{{\rm SL}}(2,\R)$ and ${\rm Nil}_3$. The metrics of these spaces are particular cases of the 7-families of BCV-spaces that appear in Theorem~\ref{PR15}, because the naturally reductive spaces are a particular class of homogenous spaces.

The case of four-dimensional manifolds was treated by Kowalski and Vanhecke, see \cite[Theo\-rem~1, p.~224]{kw4} or \cite[Theorem~2, p.~6]{btv}:
\begin{Theorem}\label{thm2} Let $(M,g)$ be a four-dimensional simply connected naturally reductive Riemannian manifold. Then $(M,g)$ is either symmetric or it is a~Riemannian product of the naturally reductive spaces of dimension~$3$ of type $(b)$ appearing in Theorem~{\rm \ref{thm1}} times~$\R$. In the last cases, $(M,g)$ is not locally symmetric.
\end{Theorem}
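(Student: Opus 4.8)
The plan is to prove the classification through the theory of homogeneous structures of Ambrose and Singer \cite{as} in the refined form of Tricerri and Vanhecke used in \cite{tv,btv}. Since $(M,g)$ is naturally reductive, complete and simply connected, it carries a homogeneous structure $T$, a tensor field of type $(1,2)$ with $g(T_XY,Z)=-g(T_XZ,Y)$ satisfying the Ambrose--Singer equations $\widetilde\nabla g=\widetilde\nabla R=\widetilde\nabla T=0$ for $\widetilde\nabla=\nabla-T$, where $\nabla$ is the Levi-Civita connection; by \cite{tv} natural reductivity is equivalent to $T$ being totally skew-symmetric, i.e.\ to $T$ lying in the class $\mc{T}_3$ of the Tricerri--Vanhecke decomposition. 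First I would dispose of the extreme case: if a naturally reductive $(M,g)$ admits the choice $T=0$, then $\widetilde\nabla=\nabla$ and $\nabla R=0$, so being complete and simply connected $M$ is a symmetric space, which is the first alternative.

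So assume $M$ is not symmetric; then every homogeneous structure on it is nonzero. The key observation is representation-theoretic: in dimension four the totally skew-symmetric tensors $\mc{T}_3$ are exactly the $3$-forms, and since $\mc{T}_3\cong\Lambda^3\cong\Lambda^1$ via the Hodge star, the $(0,3)$-tensor $(X,Y,Z)\mapsto g(T_XY,Z)$ corresponds to a nonvanishing $1$-form and hence to a vector field $\xi$, with $T=\iota_\xi\operatorname{vol}$ for $\operatorname{vol}$ the volume form. The crucial computation is then $g(T_X\xi,Z)=\operatorname{vol}(\xi,X,\xi,Z)=0$, because $\xi$ appears twice, whence $T_X\xi=0$ for all $X$; combining this with $\widetilde\nabla\xi=0$ yields $\nabla_X\xi=T_X\xi=0$, that is, $\xi$ is a nonzero Levi-Civita parallel vector field. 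I expect this to be the main obstacle: it rests on the precise fact that $\mc{T}_3$ in dimension four is realized pointwise by $\iota_\xi\operatorname{vol}$, and on transcribing $\widetilde\nabla T=0$ faithfully into $\widetilde\nabla\xi=0$.

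With $\xi$ nonzero and parallel, the de Rham decomposition theorem, applicable because $M$ is complete and simply connected, splits $M$ isometrically as a Riemannian product $M=M'\times\R$, the $\R$-factor being the integral curve of $\xi/|\xi|$ and $M'$ a three-dimensional complete simply connected manifold. Writing $\operatorname{vol}=\xi^\flat\wedge\operatorname{vol}_{M'}$ one finds $T=\iota_\xi\operatorname{vol}=\operatorname{vol}_{M'}$, the pullback of the volume form of $M'$, which is automatically a $\mc{T}_3$ homogeneous structure; since the $\R$-factor is flat and $\xi$ is parallel, the Ambrose--Singer equations on $M$ restrict on the $M'$-directions to the corresponding equations for $\widetilde\nabla^{M'}=\nabla^{M'}-\operatorname{vol}_{M'}$, so $M'$ is a three-dimensional naturally reductive space. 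Then Theorem~\ref{thm1} applies: $M'$ is either symmetric or isometric to one of the type~$(b)$ groups $\mr{SU}(2)$, $\widetilde{\mr{SL}}(2,\R)$, $H_1$. If $M'$ is symmetric then $M=M'\times\R$ is symmetric, returning to the first alternative; otherwise $M'$ is of type~$(b)$ and $M=M'\times\R$ is the asserted product. Finally, to verify that such a product is not locally symmetric, I would note that $R$ and $\nabla R$ of $M'\times\R$ restrict on the $M'$-directions to $R_{M'}$ and $\nabla R_{M'}$, the flat $\R$-factor contributing nothing; since a type~$(b)$ space is a non-symmetric homogeneous space it satisfies $\nabla R_{M'}\neq0$, hence $\nabla R\neq0$ on $M$, which completes the argument.
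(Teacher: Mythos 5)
This theorem is quoted in the paper as a known result of Kowalski and Vanhecke (\cite[Theorem~1, p.~224]{kw4}, \cite[Theorem~2, p.~6]{btv}) and no proof is given there, so the comparison is with the cited sources. Your argument is correct and is essentially the standard proof from those references: natural reductivity is encoded by a homogeneous structure $T\in\mc{T}_3$, in dimension four the isomorphism $\Lambda^3\cong\Lambda^1$ turns $T$ into $\iota_\xi\operatorname{vol}$, the identities $T_X\xi=0$ and $\widetilde\nabla\xi=0$ force $\nabla\xi=0$, and de Rham splitting reduces to the three-dimensional case of Theorem~\ref{thm1}. The only points worth making explicit are that $\xi$ is nowhere vanishing because $\widetilde\nabla$ is metric and $\widetilde\nabla T=0$ fixes $|T|$, and that $\operatorname{vol}$ exists globally because a simply connected manifold is orientable; both are implicit in your write-up.
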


\subsection[$V_2$ and $V_3$ spaces with transitive group]{$\boldsymbol{V_2}$ and $\boldsymbol{V_3}$ spaces with transitive group}

The determination of the groups $G_3$ of isometries with three parameters of a two-dimensional space $V_2$ with positive definite metric was done by Bianchi~\cite{BIANCHI}. In Proposition~\ref{P9} below we follow Vranceanu, see \cite[Chapter V, Section~14, p.~288]{vr}. The generators of~$G_3$ in \cite[equation~(90)]{vr} considered by Vranceanu, in our notation \eqref{KILPL}, verifies the commutation relations
\begin{gather}\label{EC90V}
[X,Y]=-\epsilon Z, \qquad [Y,Z]=-k X,\qquad [Z,X]=-Y, \qquad \epsilon=\pm 1.
\end{gather}
Below we also write down $V_2$ as a homogenous manifolds.

${\rm E}(2)$ is the group of rigid motions of the Euclidean 2-space, denoted $M(2)$ in \cite[p.~195]{vil}, see also \cite[Section~8.5]{vil}.
\begin{Proposition}\label{P9}\quad
\begin{enumerate}\itemsep=0pt
\item[$1.$] If $k=0$, $\epsilon=1$, then the invariant metric of $V_2$ is given by \eqref{MTR},
 \begin{gather}\label{MTR}
 {\rm d} s^2= {\rm d} x^2+{\rm d} y^2,\end{gather}
and $V_2$ is the Euclidean space $E^2=E(2)/O(2)$. The Euclidean group is $E(2)=\R^2\rtimes O(2)$.
\item[$2.$] If $k=0$, $\epsilon=-1$, then the invariant metric of $V_2$ is
\[{\rm d} s^2= {\rm d} x^2-{\rm d} y^2,\]
on the pseudo-euclidean space $V_2=E^{1,1}=E(1,1)/O(1,1)$, $E(1,1)= \R^2\rtimes O(1,1)$.

\item[$3.$] A space $V_2$ with group $G_3$ always admits a simply transitively subgroup, except when the generators~\eqref{EC90V} of the structure group for $\epsilon= 1$ and $k>0$, when the stereographic projection of the sphere from the south pole $(0,0,-R)$ to plane tangent in the north pole $(0,0,R)$ has the expression \eqref{RIEMM}, where $m=\frac{k}{4}$, and the generators \eqref{X1X2} are
\begin{subequations}
\begin{gather*}
X = \sqrt{k}\left(z\frac{\pa }{\pa x}-x\frac{\pa }{\pa z}\right),\qquad
Y = \sqrt{k}\left(-y\frac{\pa }{\pa z}+z\frac{\pa }{\pa y}\right),\qquad
Z = y\frac{\pa }{\pa x}-x\frac{\pa }{\pa y},
\end{gather*}
\end{subequations}
i.e., rotations around the axes $x$, $y$, $z$. We have $V_2=S^2\cong \db{CP}^1\cong {\rm SU}(2)/{\rm U}(1)$. The metric of a space $V_2$ with simply transitive abelian group may be written as
\begin{gather}\label{SNUS}{\rm d} s^2=\e^{2\lambda v}{\rm d} u^2+ \epsilon {\rm d} v^2,\qquad k=-\lambda^2.\end{gather}
If $\epsilon=1$ and $k>0$, then \eqref{SNUS} can be written down as
\[{\rm d} s^2={\rm d} u^2 +{\rm d} v^2+\frac{k(u{\rm d} v+v{\rm d} u)^2}{1-k\big(u^2+v^2\big)},\qquad k=\frac{1}{R^2}.\]
\item[$4.$] If $\epsilon =1$ and $k<0$, then the metric on $V_2$ is $($the Beltrami$)$ metric
\begin{gather}\label{GBETR1}{\rm d} s^2=\frac{{\rm d} \xi^2 +{\rm d}\eta^2}{\lambda^2\eta^2}, \qquad k=-\lambda^2,\qquad \eta>0,\end{gather}
see also \eqref{poinc}. $V_2$ is of the type of a Siegel disk $V_2=\mc{D}_1 \equiv{\rm SU} (1,1)/{\rm U}(1)$ or, equivalently, Siegel upper half-plane $\mc{H}_1$.
\end{enumerate}
\end{Proposition}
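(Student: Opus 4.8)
The plan is to treat \eqref{EC90V} as the structural starting point: a surface $V_2$ carrying a three-parameter isometry group $G_3$ has, by the Bianchi--Vranceanu analysis of the Killing equations \eqref{kill} in two dimensions, an isometry algebra $\g=\langle X,Y,Z\rangle_{\R}$ with brackets \eqref{EC90V}, the pair $(\epsilon,k)$ being the only invariants; geometrically $\epsilon$ fixes the signature and $k$ turns out to be the Gaussian curvature of $V_2$. Since $\dim V_2=2$ and $\dim G_3=3$, the isotropy subalgebra $\h$ at a point is one-dimensional, so the whole classification reduces to identifying the isomorphism type of $\g$ as a function of $(\epsilon,k)$ and then reading off the homogeneous realization $V_2=G_3/H$ with its invariant metric.

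First I would compute the Killing form of \eqref{EC90V}, which is diagonal, equal to $\operatorname{diag}(-2\epsilon,-2k\epsilon,-2k)$ in the basis $(X,Y,Z)$, and split into the four regimes. For $k=0,\epsilon=1$ the form degenerates and $\g\cong\R^2\rtimes\got{so}(2)=\got{e}(2)$, with $X$ the rotation and $Y,Z$ the commuting translations, so $V_2=E(2)/{\rm O}(2)=E^2$ carries the flat metric \eqref{MTR}. For $k=0,\epsilon=-1$ the sign flip in $[X,Y]$ yields $\got{e}(1,1)$ and $V_2=E(1,1)/{\rm O}(1,1)=E^{1,1}$ with the indefinite flat metric. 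For $\epsilon=1,k>0$ the form is negative definite, so $\g\cong\got{so}(3)\cong\got{su}(2)$ and $V_2=S^2\cong\db{CP}^1\cong{\rm SU}(2)/{\rm U}(1)$. For $\epsilon=1,k<0$ it has signature $(2,1)$, so $\g\cong\got{sl}(2,\R)\cong\got{su}(1,1)$ and $V_2=\mc{D}_1={\rm SU}(1,1)/{\rm U}(1)$, equivalently the upper half-plane $\mc{H}_1$, with the Beltrami metric \eqref{GBETR1}. In each case I would pin down $H$ as the one-parameter subgroup whose adjoint action on the complementary plane is a rotation, i.e.\ the compact/elliptic generator.

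Next I would prove the assertion about a simply transitive subgroup. Its existence is equivalent to that of a two-dimensional subalgebra $\h'\subset\g$ transverse to the isotropy line $\h$; given $\h'$, its connected subgroup acts freely and transitively, and in canonical coordinates of the second kind attached to $\h'$ the left-invariant metric takes the warped form $\e^{2\lambda v}{\rm d} u^2+\epsilon{\rm d} v^2$ of \eqref{SNUS}, whose Gaussian curvature is $-\lambda^2$, so $k=-\lambda^2$. Such an $\h'$ exists for $\got{e}(2)$, $\got{e}(1,1)$ and $\got{sl}(2,\R)$ (the translation plane in the first two, a Borel / $ax+b$ subalgebra in the last). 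The sole exception is $\got{so}(3)$: under the identification $\got{so}(3)\cong(\R^3,\times)$ every two-dimensional subspace $\langle u,v\rangle_{\R}$ has $[u,v]=u\times v$ orthogonal to both $u$ and $v$, hence outside $\langle u,v\rangle_{\R}$ unless $u,v$ are dependent, so $\got{so}(3)$ admits no two-dimensional subalgebra and $S^2$ carries no simply transitive subgroup. For that case I would instead exhibit the generators as the rotations \eqref{X1X2} about the coordinate axes and derive the metric by stereographic projection, matching \eqref{RIEMM} with $m=\frac{k}{4}$ and the equivalent form with $k=1/R^2$.

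The hard part will be the sphere branch of Part~3: upgrading the purely algebraic fact that $\got{so}(3)$ has no two-dimensional subalgebra to the geometric statement that $S^2$ carries no simply transitive subgroup of $G_3$, and then producing the stereographic metric with the precise normalisation $m=\frac{k}{4}$, since this step passes from abstract brackets to an explicit coordinate integration rather than a pure algebra argument. The remaining tasks---writing the invariant metric in the other three cases and checking the curvature identity $k=-\lambda^2$ for the warped form---amount to a routine integration of the structure equations, and I would only record the resulting expressions.
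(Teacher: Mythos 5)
Your proposal is correct, but you should know that the paper itself does not prove Proposition \ref{P9} at all: it is presented as a summary of the classical Bianchi--Vranceanu classification, with the derivation delegated to \cite[Chapter~V, Section~14]{vr}, where the four cases are obtained by integrating the Killing equations \eqref{kill} in adapted coordinates after normalising the structure constants to \eqref{EC90V}. Your route is therefore genuinely different and essentially self-contained: the Killing form of \eqref{EC90V} is indeed $\operatorname{diag}(-2\epsilon,-2k\epsilon,-2k)$ in the basis $(X,Y,Z)$, and its degeneracy/signature identifies $\g$ as $\got{e}(2)$, $\got{e}(1,1)$, $\got{so}(3)$ or $\got{sl}(2,\R)$, after which the realisations $E(2)/{\rm O}(2)$, $E(1,1)/{\rm O}(1,1)$, ${\rm SU}(2)/{\rm U}(1)$, ${\rm SU}(1,1)/{\rm U}(1)$ and the invariant metrics \eqref{MTR}, \eqref{RIEMM}, \eqref{GBETR1} are standard. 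What this buys is that the step you flag as ``hard'' is in fact already finished: a simply transitive subgroup is necessarily a two-dimensional connected subgroup, hence corresponds to a two-dimensional subalgebra, and your cross-product argument that $\got{so}(3)$ has none is a complete proof of the exceptional status of $S^2$ in Part~3; only the routine stereographic computation producing \eqref{RIEMM} with $m=\tfrac{k}{4}$ (whose curvature $4m=k$ confirms the identification of $k$ with the Gaussian curvature) remains. Two minor points: the identity $k=-\lambda^2$ for \eqref{SNUS} does follow from the standard curvature formula for $\e^{2\lambda v}{\rm d}u^2+\epsilon\,{\rm d}v^2$, as you assert; and for $k\neq 0$ the simply transitive subgroup realising \eqref{SNUS} is the non-abelian $ax+b$ group (a Borel subgroup of $\SL$), since $\got{sl}(2,\R)$ has no two-dimensional abelian subalgebra --- your phrasing via an arbitrary two-dimensional subalgebra $\h'$ is the accurate one, and the word ``abelian'' in the statement should not be taken literally there.
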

For \eqref{GBETR1}, see \cite[equation~(2)]{sieg} or \cite[Theorem~3, p.~644]{hua}.

The formulation of the following proposition is extracted from \cite{ino}:
\begin{Proposition}\label{PR10} If $(V_3,g)$ is a homogenous space of dimension $3$, then $\dim (I(V_3))= 6$, $4$ or $3$.
\begin{enumerate}\itemsep=0pt
\item[$1.$] If $ \dim (I(V_3))=6$, then $V_3$ is of the type of the real space forms, i.e., the real Euclidean space~$E^3$, the sphere $S^3(\kappa)$, or the hyperbolic space
$\mc{H}^3(\kappa)$.
\item[$2.$] If $\dim (I(V_3))=4$, then $V_3$ is either a~Riemannian product $\mc{H}^2(\kappa)\times \R$ or $S^2(\kappa)\times\R$, or one of the following Lie groups with left invariant metric: ${\rm SU}(2)$, $\widetilde{{\rm SL}}(2,\R)$ or $H_1$, see~{\rm \cite{ml}}.
\item[$3.$] If $\dim (I(V_3))=3$, then $V_3$ is a general $3$-dimensional Lie group with left-invariant metric, e.g., the Lie group ${\rm Sol}_3$, i.e., the group with the composition law:
\[(x_1,y_1,t_1)(x_2,y_2,t_2)=\big(x_1+\e^tx_2,y_1+\e^{-t}y_2,t_1+t_2\big)\] and
the left-invariant metric \[{\rm d} s^2=\e^{-2t}{\rm d} x^2 + \e^{2t}{\rm d} y^2+ {\rm d} t^2.\]
\end{enumerate}
\end{Proposition}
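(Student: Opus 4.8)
The plan is to reduce the determination of the possible values of $\dim(I(V_3))$ to the structure of the isotropy representation, and then to invoke the classification of low-dimensional homogeneous geometries for the identification of the models.

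First I would use the general bound $\dim(\iota(M))\le\frac{n(n+1)}{2}$ recalled in Section~\ref{KLVF}, which for $n=3$ gives $\dim(I(V_3))\le 6$, with equality exactly for spaces of constant curvature. Since $V_3$ is homogeneous, $I(V_3)$ acts transitively, so at a base point $o$ one has $\dim(I(V_3))=\dim V_3+\dim H=3+\dim H$, where $H=I(V_3)_o$ is the isotropy group. By Lemma~\ref{LMN} the linear isotropy representation is faithful when the action is effective, hence the identity component $H^0$ embeds as a connected subgroup of ${\rm SO}(T_oV_3)\cong{\rm SO}(3)$. The key point is that ${\rm SO}(3)$ has connected subgroups only of dimensions $0$, $1$ and $3$: identifying $\got{so}(3)$ with $(\R^3,\times)$, any two-dimensional subspace $\langle u,v\rangle$ has $[u,v]=u\times v$ orthogonal to both generators, hence it lies in the subspace only if $u\times v=0$, so no two-dimensional subalgebra exists. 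Therefore $\dim H\in\{0,1,3\}$ and $\dim(I(V_3))\in\{3,4,6\}$; in particular $5$ is excluded, which is the first assertion.

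The two extreme cases are then immediate. If $\dim(I(V_3))=6$ the manifold has constant curvature, and the Killing--Hopf theorem identifies the simply connected models as $E^3$, $S^3(\kappa)$ and $\mc{H}^3(\kappa)$, giving part~1. If $\dim(I(V_3))=3$ then $H$ is discrete, so $I(V_3)^0$ acts simply transitively and $V_3$ is a three-dimensional Lie group with a left-invariant metric, with ${\rm Sol}_3$ as the representative example in part~3.

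The hard part is the intermediate case $\dim(I(V_3))=4$, in which $H^0\cong{\rm SO}(2)$. Here the fixed axis of the rotation action on $T_oV_3$ singles out a distinguished unit Killing direction, and the curvature must be invariant under the ${\rm SO}(2)$-rotation of the orthogonal plane, which forces a strong degeneracy in the metric. I would combine this constraint with Milnor's normal form~\cite{ml} for left-invariant metrics on three-dimensional unimodular Lie groups, where the extra rotational Killing field appears precisely when two of the structure eigenvalues coincide; this produces the twisted models ${\rm SU}(2)$, $\widetilde{{\rm SL}}(2,\R)$ and $H_1$ with suitable left-invariant metrics, while the reducible geometries contribute the Riemannian products $S^2(\kappa)\times\R$ and $\mc{H}^2(\kappa)\times\R$, recovering part~2. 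The genuine obstacle is to show that exactly these five geometries occur, with no further case and no coincidences among them, which requires the detailed curvature computation underlying~\cite{ino} rather than the soft dimension count used above.
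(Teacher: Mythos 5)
The paper does not actually prove this proposition: it is quoted from the reference \cite{ino}, with the remark ``the formulation of the following proposition is extracted from'' that source, so there is no internal proof to compare yours against. Your dimension count is correct and is the standard argument: transitivity gives $\dim I(V_3)=3+\dim H$, faithfulness of the linear isotropy representation places $H^0$ inside ${\rm SO}(3)$, and the absence of two-dimensional subalgebras of $\got{so}(3)$ (the cross-product observation) rules out $\dim H=2$ and hence the value $5$. The identifications in the two extreme cases are also fine: maximal isometry dimension $6=\frac{3\cdot 4}{2}$ forces constant curvature and the three space forms, while discrete isotropy makes the identity component of $I(V_3)$ act (on the simply connected model) simply transitively, yielding a Lie group with left-invariant metric. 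The one substantive gap --- which you identify yourself --- is the case $\dim I(V_3)=4$: establishing that ${\rm SO}(2)$-isotropy produces exactly the two Riemannian products and the three twisted geometries ${\rm SU}(2)$, $\widetilde{{\rm SL}}(2,\R)$, $H_1$, with no omissions, requires the curvature normal-form analysis (Milnor's eigenvalue discussion for unimodular groups together with the non-unimodular and reducible cases), and you defer this to the literature rather than carry it out. Since the paper defers the entire proposition to the same source, your proposal is at least as complete as the paper's treatment, but as a self-contained proof part~2 remains an outline.
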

The above classification contains the eight model geometries of Thurston~\cite{th}: $E^3$, $\mc{H}^3$, $S^3$, $\mc{H}^2\times \R$, $S^2\times \R$, $\tilde{{\rm SL}}(2,\R)$, $H_1$ and ${\rm Sol}_3$.

Cartan classified all 3-dimensional spaces $V_3$ with a 4-dimensional isometry group $G_4$ in~\cite{cart}, see also~\cite{BIANCHI} and~\cite{vr}. See also~\cite{vp} for a modern presentation of Cartan approach.

The Bianchi--Cartan--Vranceanu (BCV) spaces are $V_3$ spaces with $\dim (I(V_3))=4$ together with $E^3$ and $S^3(\kappa)$, while the hyperbolic space $\mc{H}^3(\kappa)$ appearing in Theorem~\ref{thm1}~-- a~symmetric naturally reductive~-- is missing in the list of BCV-spaces.

For $\kappa,\tau\in\R$, it is defined the open subset of $\R^3$
\begin{gather}\label{M3}
{\rm BCV}(\kappa,\tau): = \left\{ (x,y,z) \in\R^3\,|\,D = D(x,y,z;\kappa)>0,
 \ \text{where} \ D:=1+\frac{\kappa}{4}\big(x^2+y^2\big)\right\},\!\!\!\end{gather}
equipped with the metric
\begin{gather}\label{BCV}
{\rm d} s^2_{\rm BCV}(x,y,z;\kappa,\tau)= \frac{{\rm d} x^2 +{\rm d} y^2}{D^2}+\left({\rm d} z +\tau\frac{y{\rm d} x-x{\rm d} y}{D}\right)^2.\end{gather}

Following \cite[Section~2.5]{JVAN} and \cite[Example~2.1.10, p.~59]{calin}, the BCV spaces are described as in
\begin{Theorem}\label{PR15}
All $3$-dimensional homogenous spaces $V_3$ with isometry group $G_4$ are locally isomorphic with the BCV-spaces. The BCV family also includes
two real space forms, with isometry group~$G_6$, see Proposition~{\rm \ref{PR10}}. The full classification of these spaces is as follows:
\begin{enumerate}\itemsep=0pt
\item[$1)$] if $\kappa=\tau=0$, then ${\rm BCV}(\kappa,\tau)\cong {E}^3 $;
\item[$2)$] if $\kappa=4\tau\not=0$, then ${\rm BCV}(\kappa,\tau)\cong S^3\big(\frac{\kappa}{4}\big)\setminus \{\infty\}$;
\item[$3)$] if $\kappa>0$ and $\tau=0$, then ${\rm BCV}(\kappa,\tau)\cong {S}^2(\kappa)\setminus\{\infty\})\times \R$;
\item[$4)$] if $\kappa<0$ and $\tau=0$, then ${\rm BCV}(\kappa,\tau)\cong \mc{H}^2(\kappa)\times \R$;
\item[$5)$] if $\kappa>0$ and $\tau \not= 0$, then ${\rm BCV}(\kappa,\tau)\cong {\rm SU}(2)\setminus \{\infty\}$;
\item[$6)$] if $\kappa<0$ and $\tau \not= 0$, then ${\rm BCV}(\kappa,\tau)\cong \widetilde{{\rm SL}}(2,\R)$;
\item[$7)$] if $\kappa = 0$ and $\tau\not= 0$, then ${\rm BCV}(\kappa,\tau)\cong {\rm Nil}_3$.
\end{enumerate}
Here the Poincar\'e $($Siegel$)$ disc is
\[\mc{H}^2(\kappa)\cong \left\{(x,y)\in \R^2\,|\, D<0, \, {\rm d} s^2= \frac{{\rm d} x^2+{\rm d} y^2}{D^2}\right\}.\]
An orthonormal frame of vectors on ${\rm BCV}(\kappa,\tau)$ is given by
 \begin{gather}\label{VF1}
e_1 =D\frac{\pa}{\pa x} - \tau y \frac{\pa}{\pa z},\qquad e_2 =D\frac{\pa}{\pa y}+ \tau x \frac{\pa}{\pa z},\qquad e_3 =\frac{\pa}{\pa z} ,
\end{gather}
verifying the commutation relations
\[[e_1,e_2]=\frac{\kappa}{2}(-ye_1+xe_2)+2\tau e_3,\qquad [e_2,e_3]=[e_3,e_1]=0.\]
The dual $1$-forms $\omega_i$, $\langle \omega^i\,|\,e_j\rangle =\delta_{ij}$, $i, j=1,2,3$, to the orthonormal vector fields \eqref{VF1} are
\begin{gather}\label{1frm}
\omega^ 1=\frac{{\rm d} x}{D}, \qquad \omega^ 2=\frac{{\rm d} y}{ D}, \qquad \omega^3 ={\rm d} z +\tau\frac{y{\rm d} x-x{\rm d} y}{D},
\end{gather}
and we write down \eqref{BCV} as
\begin{gather*}
{\rm d} s^2_{\rm BCV}=\sum_{i=1}^3\omega^i\otimes\omega^i.
\end{gather*}

Let $\mc{D}$ be a distribution generated by $e_1$, $e_2$. The intrinsic $($extrinsic$)$ ideal is given by $\mc{J}=\langle\omega_3\rangle$ $($respectively, $\mc{I}=\langle \omega_1,\omega_2\rangle)$.

If $\tau\not= 0$, the distribution is step $2$ everywhere and $\omega^3$ is a contact form. If we consider the sub-Riemannian metric
\begin{gather*}{\rm d} s^2_{\mc{D}}=\sum_{i=1}^2\omega^i\otimes\omega^i,
\end{gather*}
then the BCV-space is a sub-Riemannian manifold $\big({\rm BCV},\mc{D},{\rm d} s^2_{\mc{D}}\big)$.
\end{Theorem}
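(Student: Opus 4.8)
The plan is to separate the statement into a classification part, proved up to local isomorphism by appealing to the list of three-dimensional homogeneous spaces already recalled, and an explicit part consisting of the frame, the dual one-forms and the contact structure, which is a direct computation on the model \eqref{M3}--\eqref{BCV}.

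First I would invoke Proposition~\ref{PR10}: a homogeneous $V_3$ has $\dim(I(V_3))\in\{3,4,6\}$, the value $6$ giving the real space forms $E^3$, $S^3$, $\mc{H}^3$ and the value $4$ giving exactly $\mc{H}^2(\kappa)\times\R$, $S^2(\kappa)\times\R$, ${\rm SU}(2)$, $\widetilde{{\rm SL}}(2,\R)$ and $H_1$ with a suitable left-invariant metric; cf.\ Milnor~\cite{ml}. It then remains to show that the two-parameter family \eqref{BCV} realizes each of these and to fix the parameter ranges. The case distinction is governed by the sign of $\kappa$ and by the vanishing of $\tau$. Setting $\tau=0$ collapses $\omega^3$ to ${\rm d} z$, so \eqref{BCV} splits as the surface metric $({\rm d} x^2+{\rm d} y^2)/D^2$ of constant curvature times $\R$, yielding $E^3$, $S^2(\kappa)\times\R$ or $\mc{H}^2(\kappa)\times\R$ according as $\kappa=0$, $\kappa>0$ or $\kappa<0$. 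For $\tau\neq0$ the fibre term couples the base; the bracket computed below, together with the curvature of \eqref{BCV}, then determines the local isometry type, matching ${\rm SU}(2)$ ($\kappa>0$), $\widetilde{{\rm SL}}(2,\R)$ ($\kappa<0$) and ${\rm Nil}_3$ ($\kappa=0$) from Milnor's list, the degenerate value $\kappa=4\tau$ producing the round three-sphere $S^3(\kappa/4)$.

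Next I would verify the explicit data directly. Substituting the frame \eqref{VF1} into \eqref{BCV} and using $\omega^3(e_1)=\omega^3(e_2)=0$ gives $g(e_i,e_j)=\delta_{ij}$, so $\{e_1,e_2,e_3\}$ is orthonormal; computing the Lie brackets of \eqref{VF1} produces $[e_1,e_2]=\frac{\kappa}{2}(-ye_1+xe_2)+2\tau e_3$ and $[e_2,e_3]=[e_3,e_1]=0$, which is the asserted relation and supplies the invariant used in the previous paragraph. Pairing \eqref{1frm} with \eqref{VF1} yields $\langle\omega^i\,|\,e_j\rangle=\delta_{ij}$, and re-expanding $\sum_i\omega^i\otimes\omega^i$ recovers \eqref{BCV}.

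Finally, for the sub-Riemannian claim I would compute ${\rm d}\omega^3$ from \eqref{1frm} and find it proportional to $\tau\,{\rm d} x\wedge{\rm d} y$ modulo $D$, so that $\omega^3\wedge{\rm d}\omega^3\neq0$ precisely when $\tau\neq0$; hence $\omega^3$ is a contact form, the distribution $\mc{D}=\langle e_1,e_2\rangle$ is step $2$ because $[e_1,e_2]$ has a nonzero $e_3$-component, and $({\rm BCV},\mc{D},{\rm d} s^2_{\mc{D}})$ is sub-Riemannian. The only genuinely hard input is the classification of three-dimensional homogeneous geometries underlying Proposition~\ref{PR10}, which rests on Milnor's analysis~\cite{ml} together with the Bianchi--Cartan--Vranceanu work; here it is taken as known, so the remaining content of the theorem is the routine verification outlined above.
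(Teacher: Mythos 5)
The paper does not actually prove Theorem~\ref{PR15}: it is reproduced verbatim as background material, with the sentence preceding it delegating everything to \cite[Section~2.5]{JVAN} and \cite[Example~2.1.10, p.~59]{calin}. So there is no internal argument to compare yours against; what you have written is a reconstruction of the proof that the cited sources carry out. Your computational layer is correct and genuinely routine: the duality $\langle\omega^i\,|\,e_j\rangle=\delta_{ij}$ follows from $\omega^3(e_1)=-\tau y+\tau y=0$ and its mate, the bracket identity holds because $2\tau D=2\tau+\frac{\kappa\tau}{2}\big(x^2+y^2\big)$ absorbs the $\partial_z$-terms of $\frac{\kappa}{2}(-ye_1+xe_2)$, and ${\rm d}\omega^3=-2\tau\,\omega^1\wedge\omega^2$ gives $\omega^3\wedge{\rm d}\omega^3\neq0$ exactly when $\tau\neq0$. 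The split cases $\tau=0$ are also immediate, since \eqref{BCV} then visibly becomes a product metric. The one place where your sketch is thinner than it looks is the identification of the local isometry type for $\tau\neq0$: the frame \eqref{VF1} is \emph{not} a left-invariant frame for a group structure (the coefficients $\frac{\kappa}{2}(-ye_1+xe_2)$ depend on the point), so you cannot read the Milnor type of \cite{ml} directly off the displayed commutation relations; one must either exhibit the four-dimensional Killing algebra and a simply transitive subgroup, or compute curvature invariants, and one must separately detect the overlap $\kappa=4\tau$ inside case~5), where the isometry group jumps to dimension~6. Since Proposition~\ref{PR10} and Theorem~\ref{thm1} are themselves quoted results, taking that identification as known is consistent with the level of rigour of the appendix, but it is the only non-routine content of the theorem and deserves to be flagged as such rather than folded into ``determines the local isometry type''.
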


\begin{Remark}Note that the BCV metrics appearing in Cases 1, 2, 5, 6, 7 are metrics on the corresponding naturally reductive spaces of Theorem~\ref{thm1}. Note that naturally reductive space~$\mc{H}^3$ in Theorem~\ref{thm1}, corresponding to the isometry group of dimension~6, is not a BCV space.
\end{Remark}
See \cite{fer} for a generalization of BCV spaces to 7 dimensions.

Applying the Cayley transform, we can formulate Theorem~\ref{PR15} on the Siegel upper half-plane, instead on the Siegel disk $D(x,y,z;\kappa)$ defined by~\eqref{M3}. We get
\begin{Remark}\label{RM5}With the Cayley transform
\begin{gather*}
\sqrt{-\frac{\kappa}{4}}\zeta=\frac{v-\ii}{v+\ii},
\end{gather*}
we get for \eqref{M3}
\begin{gather*}
D(x,y,z;\kappa)=1+\frac{4}{\kappa}\big(x^2+y^2\big)=4\frac{\Im v}{|v+\ii|^2}>0,
\end{gather*}
where $\zeta:=x+\ii y$. If $v:=\alpha+\ii \beta$, $E:=\alpha^2+(\beta+1)^2$, then the left invariant one-forms \eqref{1frm} in the new variables are
\begin{gather*}
\omega^1 =\frac{\sqrt{-\kappa}}{4}\frac{\big({-}\alpha^2+\beta^2+2\beta+1\big){\rm d}\alpha-2\alpha(\beta+1){\rm d}\beta}{\beta E},\\
\omega^2 =\frac{\sqrt{-\kappa}}{4}\frac{(\alpha^2-\beta^2-2\beta-1){\rm d}\alpha+2\alpha{\rm d}\beta}{\beta E},\\
\omega^3 ={\rm d} z+\frac{2\tau}{\kappa}\frac{\big(\alpha^2-\beta^2+1\big){\rm d}\alpha+2\alpha\beta{\rm d}\beta}{\beta E}.
\end{gather*}
Instead of the family of metrics \eqref{BCV}, we get in Theorem~\ref{PR15}
\begin{gather*}
{\rm d} s^2_{\rm BCV}(\alpha,\beta,z;\kappa,\tau)=-\frac{1}{\kappa}\frac{{\rm d} \alpha^2+{\rm d} \beta^2}{\beta^2}+\big(\omega^3\big)^2.
\end{gather*}
 \end{Remark}

\subsection{G.o.~spaces}\label{go}
The natural reductivity is a special case of spaces with a more general property than~$(*)$, see~\cite{kwv}:\\
$(**)$ {\it Each geodesic of} $(M,g)=G/H$ {\it is an orbit of a one parameter group of isometries} $\{\exp tZ\}$, $Z\in\got{g}$.

\begin{deff}\label{DEF3}
A vector $X \in\got{g}\setminus \{0\}$ is called a {\it geodesic vector} if the curve $\gamma(t)=(\exp tX)(p)$ is a geodesic.
\end{deff}

Riemannian homogeneous spaces with property $(**)$ are called {\it g.o.~spaces} (g.o.\ = geodesics are orbits). All naturally reductive spaces are g.o.\ manifolds.

Kowalski and Vanhacke \cite{kwv} have proved that
\begin{Proposition}[geodesic lemma]\label{PRR} On homogeneous Riemannian manifolds $M=G/H$ a~vector $X \in\got{g}\setminus \{0\}$ is geodesic if and only if
\begin{gather}\label{BCOND}
B([X,Y]_{\m}, X_{\m})=0, \qquad \forall\, Y\in \m.
\end{gather}
\end{Proposition}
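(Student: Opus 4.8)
The plan is to reduce the geodesic condition to a single pointwise identity at the base point $o=eH$ and then evaluate it using the Killing property of fundamental vector fields together with the $\Ad$-equivariance $(\tau(g))_*X^*=(\Ad(g)X)^*$. First I would note that the curve $\gamma(t)=(\exp tX)\cdot o$ is exactly the integral curve through $o$ of the fundamental vector field $X^*$: since $\dot\gamma(t)=\frac{{\rm d}}{{\rm d}s}\big|_{0}(\exp tX)(\exp sX)\cdot o=X^*_{\gamma(t)}$, the curve is an orbit of the one-parameter group $\phi_t=\tau(\exp tX)$, which by Remark~\ref{RemF} consists of isometries with $\phi_t(o)=\gamma(t)$. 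Being an integral curve of $X^*$, it satisfies $\nabla_{\dot\gamma}\dot\gamma=(\nabla_{X^*}X^*)\circ\gamma$; and because $X^*$ and $g$ (hence $\nabla$) are $\phi_t$-invariant, the field $\nabla_{X^*}X^*$ is $\phi_t$-invariant, so $(\nabla_{X^*}X^*)_{\gamma(t)}=(\phi_t)_*(\nabla_{X^*}X^*)_o$. Consequently $\gamma$ is a geodesic if and only if $(\nabla_{X^*}X^*)_o=0$.

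Next I would compute $(\nabla_{X^*}X^*)_o$ by pairing it against $Y^*$ for $Y\in\m$, which is enough since $\{Y^*_o : Y\in\m\}=T_oM$. As $X^*$ is Killing, the endomorphism $A\mapsto\nabla_A X^*$ is $g$-skew, giving $g(\nabla_{X^*}X^*,Y^*)=-g(\nabla_{Y^*}X^*,X^*)=-\tfrac12\,Y^*\!\big(g(X^*,X^*)\big)$, the last equality by metric compatibility. To evaluate $Y^*\big(g(X^*,X^*)\big)$ at $o$ I would pull back by the isometry $\tau(\exp(-sY))$ and use the equivariance of fundamental fields: this yields $g(X^*,X^*)(\exp(sY)\cdot o)=B\big((\Ad(\exp(-sY))X)_\m,(\Ad(\exp(-sY))X)_\m\big)$, where $B$ is the inner product \eqref{BXY} on $\m$ and the identification $X^*_o\mapsto X_\m$ is used. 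Differentiating at $s=0$, with $\frac{{\rm d}}{{\rm d}s}\big|_0\Ad(\exp(-sY))X=[X,Y]$, gives $Y^*\big(g(X^*,X^*)\big)\big|_o=2B([X,Y]_\m,X_\m)$, and therefore $g(\nabla_{X^*}X^*,Y^*)_o=-B([X,Y]_\m,X_\m)$.

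Combining the two steps, $(\nabla_{X^*}X^*)_o=0$ holds exactly when $B([X,Y]_\m,X_\m)=0$ for all $Y\in\m$, which is \eqref{BCOND}. The forward implication then follows by evaluating $\nabla_{\dot\gamma}\dot\gamma=0$ at $t=0$, while the converse is supplied by the invariance argument of the first paragraph, which propagates the vanishing at $o$ along the whole orbit.

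I expect the main obstacle to be the sign bookkeeping in the second paragraph: keeping straight the conventions $[X^*,Y^*]=-[X,Y]^*$ of Lemma~\ref{MIC} and $(\tau(g))_*X^*=(\Ad(g)X)^*$, and, crucially, computing the directional derivative $Y^*\big(g(X^*,X^*)\big)$ at $o$ purely through the $\Ad$-action rather than through the (otherwise unwieldy) flow of $X^*$. A secondary point worth stating explicitly is that the $\h$-component of a vector contributes nothing, since $Y^*_o=0$ for $Y\in\h$, so testing against $Y\in\m$ alone captures the full condition.
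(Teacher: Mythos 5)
Your argument is correct. Note that the paper itself gives no proof of this proposition: it is quoted as a known result of Kowalski--Vanhecke \cite{kwv}, so there is nothing to compare against except the literature, and what you have written is precisely the standard argument --- reduce the geodesic equation to $(\nabla_{X^*}X^*)_o=0$ by invariance under the isometry flow $\tau(\exp tX)$, then evaluate $g(\nabla_{X^*}X^*,Y^*)_o$ via the Killing skew-symmetry of $\nabla X^*$ and the equivariance $(\tau(g))_*X^*=(\Ad(g)X)^*$, which correctly produces $-B([X,Y]_{\m},X_{\m})$. The only point left implicit is the degenerate case $X_{\m}=0$ (where the orbit collapses to a point and the condition \eqref{BCOND} holds vacuously), which is also glossed over in the standard references; your handling of general $X\in\got{g}$ via $X^*_o=(X_{\m})^*_o$ is otherwise complete.
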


It is known, cf.~\cite{kwv}:
\begin{Theorem}\label{BTHM}Every simply connected Riemannian g.o.~space $(G/H,g)$ of dimension $n\le 5$ is a naturally reductive Riemannian manifold.
\end{Theorem}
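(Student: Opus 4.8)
The plan is to treat $M=G/H$ as a reductive homogeneous space with decomposition $\g=\h+\m$ and $\Ad(H)$-invariant inner product $B$ on $\m\cong T_oM$ (Proposition~\ref{KIKS}), and to exploit the geodesic Lemma~\ref{PRR}. First I would record the reformulation of the g.o.\ property. By Definition~\ref{DEF3} every geodesic through $o$ is an orbit $\exp(tX)\cdot o$ for some geodesic vector $X\in\g$, whose initial velocity is the projection $X_{\m}$; hence for each $v\in\m$ there is an $A(v)\in\h$ with $v+A(v)$ a geodesic vector, i.e.\ by~\eqref{BCOND}
\begin{gather*}
B\big([v+A(v),Y]_{\m},v\big)=0,\qquad \forall\, Y\in\m.
\end{gather*}
The assignment $v\mapsto A(v)$ is the \emph{geodesic graph}; it is positively homogeneous of degree one but in general neither additive nor uniquely determined.

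The key structural reduction I would use is that $M$ is naturally reductive precisely when the geodesic graph can be chosen to be a \emph{linear} ($\Ad(H)$-equivariant) map $A\colon\m\to\h$. In that case the graph $\m'=\{v+A(v)\colon v\in\m\}$ is an $\Ad(H)$-invariant complement all of whose elements are geodesic vectors, so the displayed condition holds on all of $\m'$, and polarizing in $v$ yields~\eqref{natred} for the decomposition $\g=\h+\m'$; conversely natural reductivity furnishes such a linear graph. The entire problem is thereby reduced to proving that for $n\le 5$ the geodesic graph is forced to be linear.

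I would then run a case analysis organized by the isotropy representation. Since $\Ad(H)|_{\m}$ preserves $B$, the isotropy algebra embeds as $\h\hookrightarrow\got{so}(n)$, and, after inserting the homogeneity $A(sv)=sA(v)$, the g.o.\ equation becomes a system of degree-two identities in $v$ constraining $A$ through the bracket $[A(v),Y]$ (the isotropy action) together with the $\m$-bracket $[v,Y]_{\m}$. For $n\le 2$ the space has constant curvature and is symmetric, hence naturally reductive. For $n=3$ and $n=4$ the classifications recalled in Theorems~\ref{thm1} and~\ref{thm2} leave only finitely many isotropy types, and in each the quadratic system admits only a linear solution. The substantive work is $n=5$: here one must enumerate the admissible subalgebras $\h\subset\got{so}(5)$ together with their actions on $\m=\R^5$ and verify, type by type, that the g.o.\ equations cannot support a genuinely nonlinear $A$.

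The hard part is exactly this dimension-five verification, and its delicacy is intrinsic: the bound $n\le 5$ is sharp, since the first g.o.\ space that fails to be naturally reductive---Kaplan's example on a six-dimensional generalized Heisenberg ($H$-type) group---occurs in dimension six. Consequently the argument cannot be purely formal; it must use that $\got{so}(n)$ for $n\le 5$ is too small to accommodate a nonlinear geodesic graph compatible with the quadratic g.o.\ constraints, whereas $\got{so}(6)$ already leaves room. Completing the $n=5$ case thus amounts to a finite but intricate inspection of isotropy types, which I would carry out along the lines of Kowalski and Vanhecke~\cite{kwv}.
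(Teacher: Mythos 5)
The paper offers no proof of Theorem~\ref{BTHM} at all: it is quoted as a known result of Kowalski and Vanhecke~\cite{kwv}, so there is no internal argument to compare yours against. Judged on its own terms, your outline correctly identifies the standard strategy: reformulate the g.o.\ property via a geodesic graph $v\mapsto A(v)$ using~\eqref{BCOND}, observe that natural reductivity corresponds to the existence of a linear $\Ad(H)$-equivariant graph (your polarization step does recover~\eqref{natred} for the shifted complement $\m'$), and reduce to a case analysis over isotropy types. You also correctly locate why the bound $n\le 5$ is sharp, namely Kaplan's six-dimensional $H$-type example.

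As a proof, however, the proposal has a genuine gap: the entire mathematical content of the theorem is the dimension-five verification, and you do not carry it out --- you state that you \emph{would} enumerate the subalgebras $\h\subset\got{so}(5)$ and check, type by type, that the quadratic g.o.\ identities force $A$ to be linear. Until that enumeration is actually performed, what you have is a plan, not a proof. Two further points would need care in executing it. First, your appeal to Theorems~\ref{thm1} and~\ref{thm2} for $n=3,4$ is circular as written: those theorems classify \emph{naturally reductive} spaces, so they cannot be used to enumerate the isotropy types of a general g.o.\ space without presupposing the conclusion; one must instead start from the classification of homogeneous Riemannian $3$- and $4$-manifolds (as in Proposition~\ref{PR10}). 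Second, as noted in the paper after Proposition~\ref{PR5}, natural reductivity may only hold after replacing $G$ by a larger transitive subgroup $\tilde{G}\subset I(M)$, so the linear-graph criterion must be applied with respect to the full isometry group rather than the given $G$. Since the result is imported from~\cite{kwv} rather than proved here, the honest options are to cite it, as the author does, or to supply the full case analysis.
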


Kowalski and Szenteke \cite{ks} proved that
\begin{Theorem}Any homogeneous Riemannian manifold admits at least one homogeneous geodesic through every point $o\in M$.
\end{Theorem}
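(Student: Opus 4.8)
The plan is to reduce the statement, via the geodesic Lemma~\ref{PRR}, to a purely Lie-algebraic existence problem, and then to solve that problem by a variational/compactness argument. By homogeneity it suffices to produce one homogeneous geodesic through the base point $o$, since any isometry carries it to a homogeneous geodesic through any other point. Writing $M=G/H$ with $G$ the identity component of the full isometry group $I(M)$, the isotropy group $H$ is compact, so there is an $\Ad(H)$-invariant inner product $B=\langle\,\cdot\,,\cdot\,\rangle$ on $\g$ with $\g=\m+\h$, $\m\perp\h$, whose restriction to $\m\cong T_oM$ is the metric. A homogeneous geodesic through $o$ is a curve $t\mapsto(\exp tX)\cdot o$ with $X\in\g$; by Definition~\ref{DEF3} and Proposition~\ref{PRR} such a curve is a nonconstant geodesic precisely when $X$ is a geodesic vector, i.e.\ $B([X,Y]_{\m},X_{\m})=0$ for all $Y\in\m$, together with $X_{\m}\neq 0$ (otherwise $X\in\h$ and the orbit degenerates to the point $o$). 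Thus everything reduces to exhibiting a single $X\in\g$ with $X_{\m}\neq0$ satisfying the geodesic condition~\eqref{BCOND}.

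The variational idea rests on one computation. For $Y\in\m$ set $g_Y(t)=\|\pi_{\m}(\Ad(\exp tY)X)\|^2$, where $\pi_{\m}$ is the orthogonal projection onto $\m$; differentiating this quadratic expression at $t=0$ gives $g_Y'(0)=2B([Y,X]_{\m},X_{\m})=-2B([X,Y]_{\m},X_{\m})$. Hence a vector $X$ for which the squared $\m$-length $\|X_{\m}\|^2$ is stationary under all infinitesimal adjoint variations in the $\m$-directions is exactly a geodesic vector. The plan is therefore to maximise $f(X)=\|X_{\m}\|^2$ over a compact piece of a suitably normalised adjoint orbit so that the first-order optimality condition returns precisely~\eqref{BCOND}; here the skew-symmetry of $\ad_Z$ for $Z\in\h$ (a consequence of the $\Ad(H)$-invariance of $B$, cf.~\eqref{INVBXY}) is used to discard the $\h$-directions, which contribute nothing to the geodesic condition.

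The delicate point, and the main obstacle, is compactness: because $G$ is in general non-compact, $B$ is only $\Ad(H)$-invariant and the adjoint orbits of $G$ need not be compact, so a maximiser of $f$ need not exist a priori, and even when it does one must rule out the degenerate outcome $X_{\m}=0$. The resolution is to run the maximisation on a compact model of the relevant directions---for instance on the unit sphere of $\m$, coupling each direction, through the skew operators $\ad_Z$ with $Z\in\h$, to an $\h$-component chosen to cancel the part of the obstruction lying in $[\h,X]\subset\m$---and to control the behaviour at infinity in the projectivisation $\P(\g)$ so as to guarantee that the supremum is attained at a vector with $X_{\m}\neq0$. Once such a maximiser $X$ is produced, its optimality condition is exactly $B([X,Y]_{\m},X_{\m})=0$ for all $Y\in\m$, so $X$ is a geodesic vector with $X_{\m}\neq0$; the curve $(\exp tX)\cdot o$ is then the desired homogeneous geodesic, and translating by isometries yields one through every point of $M$. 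This is, in essence, the argument of Kowalski and Szenthe~\cite{ks}.
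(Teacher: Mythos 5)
First, a point of reference: the paper does not prove this statement at all --- Theorem~\ref{BTHM}'s companion here is quoted directly from Kowalski and Szenthe~\cite{ks} --- so there is no in-paper argument to compare yours against, and your proposal must stand on its own. Its preparatory part is correct: by homogeneity it suffices to work at the base point, the geodesic Lemma~\ref{PRR} converts the problem into finding $X\in\g$ with $X_{\m}\neq 0$ satisfying~\eqref{BCOND}, the computation $\tfrac{d}{dt}\big|_{t=0}\|\pi_{\m}(\Ad(\exp tY)X)\|^2=2B([Y,X]_{\m},X_{\m})$ is right, and so is the observation that the $\h$-directions contribute nothing because $\ad_Z$ is skew on $\m$ for $Z\in\h$.

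The genuine gap is that the existence of a critical point with $X_{\m}\neq 0$ --- which is the entire content of the theorem --- is never established. You correctly flag non-compactness as ``the delicate point,'' but the paragraph meant to resolve it is not an argument: ``coupling each direction, through the skew operators $\ad_Z$, to an $\h$-component chosen to cancel the obstruction'' is precisely the assertion to be proved (that the $\m$-part of the obstruction lies in $\ad_{\h}(X_{\m})$ for some unit $X_{\m}$), and ``controlling the behaviour at infinity in $\P(\g)$'' is named but not performed. Concretely, the scheme fails to close for non-compact, non-semisimple $G$: there is then no $\Ad(G)$-invariant inner product, adjoint orbits are unbounded, so $\sup\|X_{\m}\|^2$ over an orbit need not be attained; and if you normalise and maximise $\|X_{\m}\|^2/\|X\|^2$ over the (compact) orbit closure in $\P(\g)$, the stationarity condition acquires the Lagrange-multiplier term $\tfrac{\|X_{\m}\|^2}{\|X\|^2}B([Y,X],X)$, which does not vanish in general, so the first-order condition is no longer~\eqref{BCOND}; moreover the maximiser may sit on a boundary orbit where even that condition is unavailable. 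Your closing attribution is also inaccurate: the published proof in~\cite{ks} is not this variational scheme but a structure-theoretic case analysis on $\g$ --- if $\g\neq[\g,\g]+\h$ one takes $0\neq X\in\m$ orthogonal to the $\m$-projection of $[\g,\g]$, for which~\eqref{BCOND} holds identically; if $\g$ is semisimple one diagonalises the symmetric operator comparing the metric with the (nondegenerate, $\Ad(G)$-invariant) Killing form, whose eigenvectors are geodesic vectors; the remaining case is handled through the Levi decomposition. Some such mechanism for actually producing the critical point is what your write-up is missing.
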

More details on g.o.\ spaces and examples are given in \cite{zd}.

\section{Balanced metrics and Berezin quantization}\label{compl}

 In our approach to Berezin quantization on K\"ahler manifold $M$ of complex dimension $n$, see, e.g.,~\cite{SB15}, we considered the K\"ahler two-form
\begin{gather*}
\omega_M(z)=\ii\sum_{\alpha,\beta=1}^n h_{\alpha\bar{\beta}} (z) {\rm d} z_{\alpha}\wedge {\rm d}\bar{z}_{\beta}, \qquad h_{\alpha\bar{\beta}}= \bar{h}_{\beta\bar{\alpha}}= h_{\bar{\beta}\alpha}.
\end{gather*}
We have considered homogenous K\"ahler manifolds $M=G/H$, where the $G$-invariant K\"ahler two-form is deduced from a K\"ahler potential~$f$
\begin{gather*}
h_{\alpha\bar{\beta}}= \frac{\pa^2 f}{\pa {z}_{\alpha}\pa \bar{z}_{{\beta}}} .
\end{gather*}

We have applied Berezin recipe to quantization \cite{ber73,ber74,berezin,ber75}, where the K\"ahler potential is obtained from the scalar product of two Perelomov CS-vectors $e_z$, $z\in M$ \cite{perG}
 \begin{gather*}
f(z,\bar{z})=\ln K_M(z,\bar{z}), \qquad K_M(z,\bar{z})=(e_{\bar{z}},e_{\bar{z}}),
\end{gather*}
i.e., \eqref{KALP}.

This choice of $f$ corresponds to the situation where the so called $\epsilon$-function, see \cite{cahII, raw,Cah},
\begin{gather*}
\epsilon(z) := \e^{-f(z)}K_M(z,\bar{z}),
\end{gather*}
is constant. The corresponding $G$-invariant metric is called {\it balanced metric}. This denomination was firstly used in~\cite{don} for compact manifolds, then it was used in \cite{arr} for noncompact manifolds and also in~\cite{alo} in the context of Berezin quantization on homogeneous bounded domain, and we have used it in the case of the partially bounded domain $\mc{D}^J_n$ -- the Siegel--Jacobi ball~\cite{SB15}.

We recall that in \cite{cahII, raw,Cah} Berezin's quantization on homogenous K\"ahler manifolds via CS was globalized and extended to non-homogeneous manifolds in the context of geometric (pre-)quantiza\-tion~\cite{Kos,woo}. To the K\"ahler manifold $(M,\omega)$, it is also attached the triple $\sigma =(\gl,h,\nabla)$, where~$\gl$ is a holomorphic (prequantum) line bundle on~$M$, $h$~is the Hermitian metric on~$\gl$ and~$\nabla$ is a connection compatible with metric and the K\"ahler structure~\cite{SBS}. The connection~$\nabla$ has the expression $\nabla=\pa +\pa \ln \hat{h} +\bar{\pa}$. The manifold is called {\it quantizable} if the curvature of the connection $F(X,Y)=\nabla_X\nabla_Y-\nabla_Y\nabla_X-\nabla_{[X,Y]}$ has the property that $F=-\ii \omega_M $, or $\pa\bar{\pa} \log \hat{h} =\ii \omega_M$, where $\hat{h}$ is a local representative of~$h$, taken $\hat{h}(z)=K^{-1}_M(z,\bar{z})$. Then $\omega_M$ is integral, i.e., the first Chern class is given by
\begin{gather*}
c_1[\mc{L}]=\frac{\ii}{2\pi}F=\frac{\omega}{\pi},\end{gather*}
and we have \eqref{KALP}.

\section[Killing vectors on $S^2$, $\mc{D}_1$ and $\R^2$]{Killing vectors on $\boldsymbol{S^2}$, $\boldsymbol{\mc{D}_1}$ and $\boldsymbol{\R^2}$}\label{sfera}

\subsection[Killing vectors on $S^2$]{Killing vectors on $\boldsymbol{S^2}$}

We consider on the sphere $S^2$
\begin{gather*}
\big\{x\in\R^3\,|\,x_1^2+x_2^2+x_3^2=R^2\big\}, \qquad R>0,\end{gather*}
the spherical coordinates, as in Fig.~\ref{fig1}, where $R=1$, $0\le \theta\le \pi$, $0\le \varphi\le 2\pi$ and
\begin{gather}\label{pesfera}
x_1 = R \sin\theta\cos \varphi,\qquad
x_2 =R \sin\theta\sin \varphi,\qquad
x_3 = R \cos \theta.
\end{gather}
The metric on $S^2$ is
\[{\rm d} s^2_{S^2}(\theta,\varphi) = \g_{\theta\theta}{\rm d} \theta^2+g_{\theta\varphi}{\rm d}
\theta{\rm d} \varphi
+ g_{\varphi\varphi}{\rm d} \varphi^2, \]
where
\[g_{\theta\theta}=1, \qquad g_{\theta\varphi}=0,\qquad
g_{\varphi\varphi}=\sin^2\theta ,\qquad
g^{\theta\theta}=1, \qquad g^{\theta\varphi}=0,\qquad
g^{\varphi\varphi}=\frac{1}{\sin^2\theta} ,\]
i.e.,
\begin{gather}\label{dissf}
{\rm d} s^2_{S^2}(\theta,\varphi)={\rm d}\theta^2+\sin^2\theta{\rm d} \varphi^2.
\end{gather}

\begin{figure}[t]\centering
\includegraphics{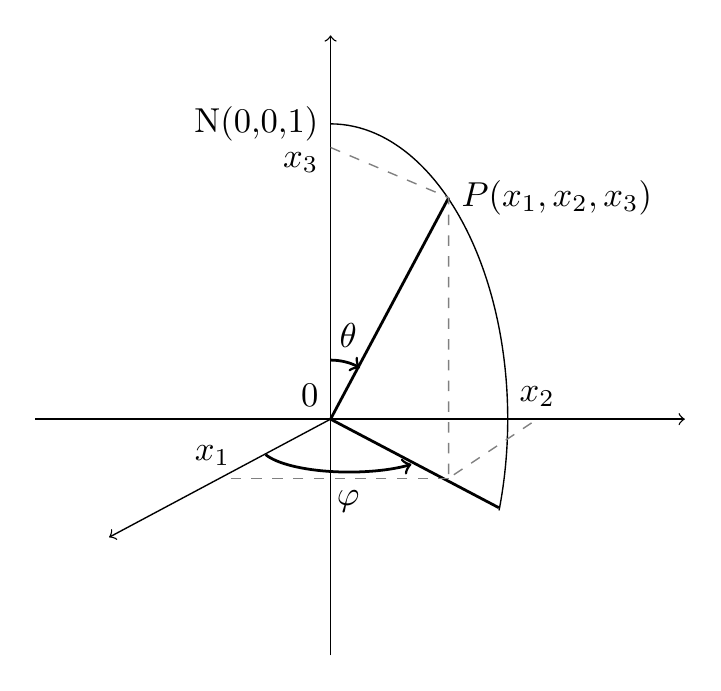}
\caption{Spherical coordinates.}\label{fig1}
\end{figure}

\begin{figure}[t]\centering
\includegraphics{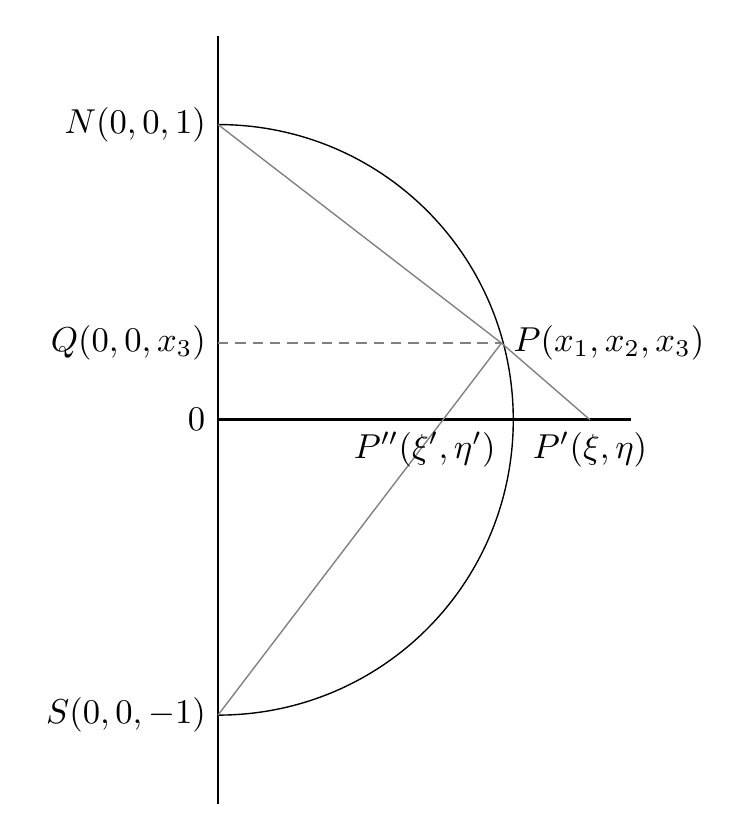}
\caption{Stereographic projection.}\label{fig2}
\end{figure}

We consider a unitary sphere with spherical coordinates~\eqref{pesfera} measured from the origin $O(0,0,0)$. The north (south) pole has coordinates $N(0,0,1)$ (respectively, $S(0,0,-1)$). We take a point $P(x_1,x_2,x_3)$ on the sphere $S^2$ and let $P'(\xi,\eta)$ ($P''(\xi',\eta')$) be the intersection of the line~$NP$ (respectively~$SP$) with the plane $x_3=0$, see Fig.~\ref{fig2}. The triangles $\Delta QNP$ and $\Delta ONP'$ (respectively $\Delta QPS$ and $\Delta OP''S$) are similar, and we have
\begin{gather}\label{ch1}\frac{1-x_3}{1}=
\frac{x_1}{\xi}=\frac{x_2}{\eta},\qquad \frac{1+x_3}{1}= \frac{x_1}{\xi'}=\frac{x_2}{\eta'}. \end{gather}The change of coordinates
$(x_1,x_2,x_3)\rightarrow (\xi,\eta)$ (respectively, $(x_1,x_2,x_3)\rightarrow (\xi'\eta')$) is given by the formulas
\[(\xi,\eta)=\left(\frac{x_1}{1-x_3}, \frac{x_2}{1-x_3}\right), \qquad (\xi',\eta')=\left(\frac{x_1}{1+x_3}, \frac{x_2}{1+x_3}\right), \]
The change of coordinates $(\xi,\eta)\rightarrow (x_1,x_2,x_3)$ (respectively, $(\xi',\eta')\rightarrow (x_1,x_2,x_3)$) is
\begin{gather*}
x_1 =\frac{2\xi}{1+\xi^2+\eta^2},\qquad x_2=\frac{2\eta}{1+\xi^2+\eta^2},\qquad x_3=\frac{-1+\xi^2+\eta^2}{1+\xi^2+\eta^2},\\
 x_1 =\frac{2\xi'}{1+(\xi')^2+(\eta')^2},\qquad x_2=\frac{2\eta'}{1+(\xi')^2+(\eta')^2},\qquad
 x_3=\frac{1-(\xi')^2-(\eta')^2}{1+(\xi')^2+(\eta')^2}.
\end{gather*}

Let $\C\ni z:=\xi-\ii \eta$, $z':=\xi'+\ii \eta'$. Then $zz'=1$. With \eqref{ch1}, \eqref{pesfera}, we find
\begin{gather}\label{ZSF}z=\cot\frac{\theta}{2}\e^{-\ii \varphi}.\end{gather}

Introducing \eqref{ZSF} into the metric on the Riemann sphere
\begin{gather*}
{\rm d} s^2(z)=4\frac{{\rm d} z{\rm d} \bar{z}}{\big(1+|z|^2\big)^2}\end{gather*}
corresponding to the K\"ahler two-form \eqref{OSF}{\samepage
\begin{gather}\label{OSF}\omega=2j \frac{\ii {\rm d} z \wedge {\rm d}
 \bar{z}} {\big(1+|z|^2\big)^{2}},\end{gather} where, if we take $2j=4$, we get again \eqref{dissf}.}

The equations \eqref{kill} of the covariant components $(X_{\theta},X_{\varphi})$ of the Killing vectors on the sphere~$S^2$ read
\begin{gather*}
X_{\theta,\theta} =0,\qquad
X_{\varphi,\varphi} +2\sin\theta\cos\theta X_{\theta}= 0,\qquad
X_{\theta,\varphi} +X_{\varphi,\theta}-2\cot\theta X_{\vartheta}=0.
\end{gather*}
But \begin{gather*}
X_{\theta} =g_{\theta\nu}X^{\nu}=X^{\theta}=u,\qquad
X_{\varphi}= g_{\nu\varphi}X^{\nu} =g_{\varphi\varphi}X^{\varphi}=\sin^2\theta v,
\end{gather*}
where $\big(X^{\theta},X^{\varphi}\big)$ are the contravariant components of the Killing vector fields on the sphere~$S^2$. The equations of the contravariant components of the Killing vector $(u,v):=\big(X^{\theta},X^{\varphi}\big)$ become
\begin{gather*}
 \frac{\pa u}{\pa \theta} =0,\qquad \sin^2\theta\frac{\pa v}{\pa \varphi} +\sin\theta\cos\theta u=0,\qquad \frac{\pa u}{\pa \varphi} +\frac{\pa v}{\pa\theta}-2\cot\theta\sin^2\theta v=0.
\end{gather*}
We find
\begin{Remark}\label{SPHC} There are three linearly independent Killing vectors on the sphere~$S^2$
\begin{gather}\label{XYZ}X=\frac{\pa}{\pa \varphi}, \qquad Y=\sin \varphi \frac{\pa}{\pa \theta}
+\cos\varphi \cot\theta\frac{\pa}{\pa \varphi},\qquad Z=\cos \varphi \frac{\pa}{ \pa\theta}-\sin\varphi\cot\theta\frac{\pa}{\pa \varphi},\end{gather}
 which verify the commutation relations
\begin{gather*}
[X,Y]=Z,\qquad [Z,X]=Y,\qquad [Y,Z]=X. \end{gather*}
The Killing vector fields \eqref{XYZ} in spherical coordinates $(\theta,\varphi)$ on the sphere $S^2$ in the stereographic coordinates $(\xi,\eta)$ are
\begin{gather}X = -\eta\frac{\pa}{\pa\xi} + \xi\frac{\pa}{\pa\eta},\qquad Y = \frac{1}{2}\left[2\xi\eta\frac{\pa}{\pa\xi} + \big(1 - \xi^2+\eta^2\big)\frac{\pa}{\pa\eta}\right],\nonumber\\
Z = -\frac{1}{2}\left[\big(1+\xi^2-\eta^2\big)\frac{\pa}{\pa\xi} + 2\xi \eta\frac{\pa}{\pa\eta}\right].\label{KILLXI}\end{gather}
\end{Remark}
\eqref{KILLXI} are equations in \cite[p.~292]{vr}: our $(X,Y,Z)$ in \eqref{KILLXI} correspond to $\big({-}Z,\frac{1}{2}Y,-\frac{1}{2}X\big)$, with $m=1$ to formulas of Vranceanu, where the Riemann metric on the Riemann sphere~$S^2$ is
\begin{gather}\label{RIEMM}{\rm d} s^2= \frac{{\rm d} \xi^2+{\rm d} \eta^2}{\big[1+m\big(\xi^2+\eta^2\big)\big]^2}.\end{gather}

\subsection[Killing vectors on the Siegel disk $\mc{D}_1$]{Killing vectors on the Siegel disk $\boldsymbol{\mc{D}_1}$}
The metric on the Siegel disk $|z|<1$ is
\begin{gather}\label{MCD}ds^2=4\frac{{\rm d} z{\rm d}\bar{z}}{B^2},\qquad B=1-|z|^2,\qquad z=\xi-\ii\eta.\end{gather}
The equations \eqref{LG} of the Killing vectors $u\frac{\pa}{\pa \xi}+v\frac{\pa}{\pa \eta}$ corresponding to the metric \eqref{MCD}, which are obtained as solution of the equation $L_Xg=0$, are
\begin{gather*}
 \frac{2}{B}(\xi u+\eta v)+\frac{\pa u}{\pa \xi}=0,\qquad 
 \frac{2}{B}(\xi u+\eta v)+\frac{\pa v}{\pa \eta}=0,\qquad 
 \frac{\pa v}{\pa \xi}+ \frac{\pa u}{\pa \eta} =0. 
\end{gather*}
We find for $\mc{D}_1$
\begin{Remark}\label{KVSD}The Killing vectors on the Siegel disk $\mc{D}_1$ corresponding to the metric \eqref{MCD} are
\begin{gather}
X_1 =\frac{1}{2}\big(\xi^2-\eta^2-1\big)\frac{\pa }{\pa
 \xi}+\xi\eta\frac{\pa}{\pa\eta},\qquad
Y_1 =\xi\eta\frac{\pa}{\pa
 \xi}+\frac{1}{2}\big(\eta^2-\xi^2-1\big)\frac{\pa}{\pa \eta},\nonumber\\
Z_1= \eta\frac{\pa}{\pa\xi}-\xi \frac{\pa}{\pa\eta}.\label{OKILL}
\end{gather}
The Killing vectors \eqref{OKILL} on the Siegel disk $\mc{D}_1$ verify the commutation relations
\begin{gather*}
[X_1,Y_1]=-Z_1,\qquad [Y_1,Z_1]=X_1,\qquad [Z_1, X_1]= Y_1.
\end{gather*}
\end{Remark}

\subsection[Fundamental vector fields as Killing vector fields on $\mc{D}_1$ and $\mc{X}_1$]{Fundamental vector fields as Killing vector fields on $\boldsymbol{\mc{D}_1}$ and $\boldsymbol{\mc{X}_1}$}\label{FKVF}

We recall some general facts about Hermitian symmetric spaces, see, e.g., \cite{sbl, Wolf1,Wolf2}.

Let
\begin{itemize}\itemsep=0pt
\item $X_n=G_n/K$: Hermitian symmetric space of noncompact type.
\item $X_c$: compact dual form of $X_n$, $X_c=G_c/K$.
\item $G_n$: largest connected group of isometries of $X_n$, a centerless semisimple Lie group.
\item $G_c$: compact real form of $G_n$.
\item $G^c=G^c_n=G^c_c=G$: complexification of $G_c$ and $G_n$.
\item $K$: maximal compact subgroup of $G_n$.
\item $\got{g}_n$, $\got{g}$, $\got{g}_c$, $\got{k}$: Lie algebras of $G_n$, $G$, $G_c$, $K$ respectively.
\item $\got{g}_n=\got{k}+\got{m}_n$, sum of $+1$ and $-1$ eigenspaces of the Cartan involution $\sigma$.
\item $\got{g}=\got{g}^c_n=\got{k}^c+\got{m}$: complexification, where $\got{m}=\got{m}^c_n$.
\item $\got{g}_c=\got{k}+\got{m}_c$: compact real form of $\got{g}_n$, where $\got{m}_c=\ii \got{m}_n$.
\end{itemize}

We consider the simple Lie algebra $\got{sl}(2,\C)=\langle F,G,H\rangle_{\C}$, whose generators verify the commutation relations
\begin{gather}\label{FGHCOM}
[F,G]=H,\qquad [G,H]=2 G,\qquad [H,F]=2F.
\end{gather}
We consider the following matrix realization of the $\got{sl}(2,\C)$ algebra
\begin{gather}\label{FGH}F=e_{12}=\left(\begin{matrix} 0 & 1\\ 0 &
 0\end{matrix}\right), \qquad G= e_{21}=\left(\begin{matrix} 0 & 0\\ 1 &
 0\end{matrix}\right), \qquad H=e_{11}-e_{22}=\left(\begin{matrix} 1 & 0\\ 0 &
 -1 \end{matrix}\right).\end{gather}
To the complex Lie algebra $A_1=\got{sl}(2,\C)$ are associated the compact real form $\got{sl}(2,\C)_c=\got{su}(2)$ and the non-compact
real forms $\got{su}(1,1)$ and $\got{sl}(2,\R)$, see \cite[pp.~186,~446]{helg}, \cite{sbl,Wolf1,Wolf2}, and we have
\begin{subequations}
\begin{gather}
\got{su}(2) =\langle \ii H, -F+G,\ii(F+G)\rangle_{\R},\label{SU2}\\
\got{su}(1,1) =\langle \ii H, \ii (F-G),F+G\rangle _{\R},\label{SU11}\\
\got{sl}(2,\R) =\langle H, -F+G,F+G\rangle _{\R}.\label{SL2R}
\end{gather}
\end{subequations}
We have also the isomorphisms between the compact real forms
\begin{gather*}
\got{su}(2)\sim\got{so}(3)\sim\got{sp}(1),
\end{gather*}
and the non-compact real forms
\begin{gather}\label{ISO2}
\got{sl}(2,\R)\sim\got{su}(1,1)\sim \got{so}(2,1)\sim\got{sp}(1,\R).\end{gather}
We have also the relations
\begin{subequations}\label{FMULTE}
\begin{gather}\got{su}(1,1) =\got{g}_n=\got{k}+\got{m}_n, \qquad \got{k}=\ii H, \qquad \got{m}_n=\langle \ii (F-G), F+G\rangle _{\R},\label{825a}\\
\got{su}(2) =\got{g}_c=\got{k}+\got{m_c},\qquad \got{m}_c=\langle F-G,\ii (F+G)\rangle _{\R}, \\
\got{sl}(2,\C) =\got{g}=\got{su}(1,1)^c=\got{su}(2)^c=\got{k}^c+\got{m}^c_n,\qquad
\got{k}^c=\langle H\rangle _{\C}, \qquad \got{m}_n=\langle F,G\rangle _{\C}.
\end{gather}
\end{subequations}
We calculate the fundamental vector fields for the real noncompact group ${\rm SU}(1,1)$. Let us denote the elements of the Lie algebra $\got{su}(1,1)$ as
\begin{gather}
G_1 := \ii H = \ii\left(\begin{matrix} 1& 0\\ 0 &
 -1\end{matrix}\right) ,\qquad G_2 := \ii(F - G)= \ii\left(\begin{matrix}
 0 & 1\\ -1 & 0\end{matrix}\right) ,\nonumber\\
 G_3 := F + G = \left(\begin{matrix} 0& 1\\ 1 & 0\end{matrix}\right) .\label{G123} \end{gather}
Note the commutation relations
\begin{gather}\label{COMG}
[G_1,G_2]= -2G_3,\qquad [G_2,G_3]= 2 G_1, \qquad [G_3,G_1]=-2 G_2.
\end{gather}
If we make the notation $G_i=2G'_i$, $i=1,2,3$, then the commutation relations \eqref{COMG} became
\begin{gather}\label{COMG2}
[G'_1,G'_2]= -G'_3,\qquad [G'_2,G'_3]= G'_1, \qquad [G'_3,G'_1]=- G'_2.
\end{gather}

We obtain, see also \cite[p.~294]{vil},
\begin{gather*}
\e^{tG_1} =\left(\begin{matrix}\e^{\ii t}& 0\\ 0 &\e^{-\ii t}
 \end{matrix}\right),\qquad
\e^{tG_2} =\left(\begin{matrix}\cosh t& \ii \sinh t\\ -\ii \sinh t
 &\cosh
 t \end{matrix}\right),\qquad
\e^{tG_3} =\left(\begin{matrix}\cosh t & \sinh t\\ \sinh t
 &\cosh t \end{matrix}\right).
\end{gather*}
We get
\begin{gather}\label{PTT}
\frac{\pa \big(\e^{tG_1}.w\big)}{\pa t}\Big|_{t=0} = 2\ii w,\qquad
\frac{\pa \big(\e^{tG_2}.w\big)}{\pa t}\Big|_{t=0} = \ii \big(1+w^2\big),\qquad
\frac{\pa \big(\e^{tG_3}.w\big)}{\pa t}\Big|_{t=0} = 1-w^2.\!\!\!
\end{gather}
With \eqref{PTT}, we get the corresponding holomorphic fundamental vector fields on the Siegel disk $\mc{D}_1=\frac{{\rm SU}(1,1)}{{\rm U}(1)}$:
\begin{gather}\label{PTT1}
G^*_1 = 2\ii w\frac{\pa}{\pa w},\qquad
G^*_2 = \ii \big(1+w^2\big)\frac{\pa}{\pa w},\qquad
G^*_3 = \big(1-w^2\big)\frac{\pa}{\pa w}.
\end{gather}
If we introduce $w=\xi-\ii \eta$, we write \eqref{PTT1} as
\begin{gather*}
G^*_1 = Z_1+\ii \left(\xi\frac{\pa}{\pa \xi}+\eta\frac{\pa}{\pa \eta}\right),\qquad
G^*_2 = Y_1+\frac{\ii}{2}\left[\big(1+\xi^2-\eta^2\big)\frac{\pa}{\pa
 \xi}+2\xi\eta\frac{\pa}{\pa \eta}\right],\\
G^*_3 = -X_1 +\frac{\ii}{2}\left[2\xi\eta\frac{\pa}{\pa
 \xi}+\big(1-\xi^2+\eta^2\big)\frac{\pa}{\pa \eta}\right] ,
\end{gather*}
where $X_1$, $Y_1$, $Z_1$ are the Killing vector fields of the Siegel disk $\mc{D}_1$ calculated in~\eqref{OKILL}.

We also have the relations, see also \cite[p.~353]{vil}
\begin{gather}\label{expFGH}\e^{tF}=\left(\begin{matrix} 1 & t\\ 0 &
 1 \end{matrix}\right),\qquad \e^{tG}=\left(\begin{matrix}
 1 & 0\\ t &
 1 \end{matrix}\right),\qquad \e^{tH}=\left(\begin{matrix}
 \e^t & 0\\ 0 &
 \e^{-t} \end{matrix}\right),\\
 \label{expFGH2}
\e^{t(F+G)}=\left(\begin{matrix}\cosh t&\sinh t\\ \sinh t & \cosh
 t\end{matrix}\right),\qquad \e^{t(F-G)}=\left(\begin{matrix}\cos
 t&\sin t\\ -\sin t & \cos t\end{matrix}\right),\\
\e^{tF}\cdot \tau=\tau +t,\qquad \e^{tG}\cdot \tau
=\frac{\tau}{1+t\tau},\qquad \e^{tH}\cdot \tau= \e^{2t}\tau,\nonumber\\
 \frac{{\rm d} }{{\rm d} t}\e^{tF}\cdot \tau\Big|_{t=0}= 1,\qquad \frac{{\rm d} }{{\rm d}
t}\e^{tG}\cdot \tau\Big|_{t=0}=-\tau^2,\qquad \frac{{\rm d} }{{\rm d} t}\e^{tH}\cdot
\tau\Big|_{t=0}=2\tau,\nonumber\\
F^*= \pa_{\tau},\qquad G^*= -\tau^2\pa_{\tau},\qquad H^*=2\tau\pa_{\tau}.\nonumber
\end{gather}
If we put $\tau=x+\ii y$, we find the fundamental vector fields on the homogenous manifold $\mc{X}_1$, see Theorem~\ref{THM0}(1) and~\eqref{MCEC}
\begin{gather}\label{FUNDFGH}
F^*_1=\frac{\pa }{\pa x},\qquad G^*_1=\big(y^2-x^2\big)\frac{\pa }{\pa x}-2 xy\frac{\pa}{\pa y},\qquad H^*_1=2\left(x\frac{\pa}{\pa x}+y\frac{\pa }{\pa y}\right).
\end{gather}
In the convention of Section~\ref{section1}, the vector fields $F^*_1$, $G^*_1$, $H^*_1$ are $\db{F}$, $\db{G}$, $\db{H}$.

If \begin{gather}\label{MRA}A=\left(\begin{matrix}a&b\\c&d\end{matrix}\right)\in \SL,
\end{gather}
then, with formula \eqref{DEFAD},
\begin{gather}\label{DEFAD}
\Ad(g)X=g Xg^{-1},\qquad g\in G, \quad X\in \g,
\end{gather} we find easily
\begin{gather*}
\Ad (A)F = a^2F-c^2G-acH,\\
\Ad (A)G = -b^2F+d^2G+bdH,\\
\Ad (A)H = -2abF+2cdG+(ad+bc)H.
\end{gather*}
We find out that in the base \eqref{FGH}
\begin{gather}\label{ADDAA}\Ad(A)= \left(\begin{matrix} a^2 & -c^2 & -ac \\ -b^2 & d^2
 & bd\\ -2ab & 2cd & ad+bc\end{matrix}\right),
\end{gather}
and $\det(\Ad)=1$,

Now let us consider an element $X\in \got{sl}(2,\R)$
\begin{gather}\label{ECXX}
X=aH+bF+cG=\left(\begin{matrix}a &b \\c &-a\end{matrix}\right).\end{gather}
Then we find
\begin{gather}\label{3ad}
[X,H] = -2bF+2cG,\qquad [X,F] = 2aF-cH,\qquad [X,G] =-2a G +bH.
\end{gather}
With \eqref{3ad} we find in the base $H$, $F$, $G$ the expression of $\ad(X)$ for $X$ given by~\eqref{ECXX}
\begin{gather}\label{addX}
\ad (X) = \left(\begin{matrix} 0 & -2b & 2c\\ -2c & 2a &0\\ b & 0&
 -2a\end{matrix}\right),\end{gather}
and \[\tr \ad =0.\]
\eqref{addX} implies, see also \cite[p.~551]{helg}:
\begin{gather}\label{BXX}K(X,X)=\tr(\ad X\circ\ad X)=8\big(a^2+bc\big)=4 \tr (XX).\end{gather}

As in Remark \ref{REMCUL}, we consider \[X=aX_1+bX_2+cX_3\in\got{su}(2),\] where,
as in \eqref{SU2}, \[X_1=\ii H,\qquad X_2=-F+G,\qquad X_3=\ii (F+G).\]
Then
\[\ad X=\left(\begin{matrix} 0 & -2c& 2b\\ 2c& 0& -2a\\ -2b&
 2a & 0\end{matrix}\right) ,\]
and \begin{gather}\label{KXYS}K(X,Y)=-4(aa'+bb'+cc'),\end{gather} i.e., the Killing form for ${\rm SU}(2)$ is $K(X,Y)=4\tr(XY)$.

Note that for $\got{su}(2)$, we have $\m=\langle X_2,X_3\rangle $.

 Putting together \eqref{G123}--\eqref{BXX} and \eqref{KXYS}, we have proved
\begin{Remark}\label{REMCUL}With \eqref{G123}, \eqref{FGH}, \eqref{nr2}, we get
\begin{gather}\label{3stele}
K_0=-\frac{\ii}{2}G_1,\qquad K_+=\frac{1}{2}(G_2+\ii G_3),\qquad K_-=-\frac{1}{2}(G_2-\ii G_3).
\end{gather}
Introducing in \eqref{PTT1} and \eqref{3stele}, we get the holomorphic fundamental vector fields
\[K^*_0= w\frac{\pa}{\pa w}, \qquad K^*_+= \ii\frac{\pa}{\pa w}, \qquad K^*_-=-\ii w^2\frac{\pa}{\pa w}, \qquad w\in\C,\qquad |w|<1. \]
Note that the vector fields $\Re G*_i$, $i=1,2,3$ verify the commutation relations \eqref{COMG2} with the sign~$-$, i.e., the (real) Killing vector fields $Z_1$, $Y_1$, $-X_1$ on $\mc{D}_1$ are the real part of the fundamental vector fields $G'^*_1$, $G'^*_2$, $G'^*_3$, corresponding to the metric~\eqref{MCD}.

The fundamental vector fields $F^*_1$, $G^*_1$, $H^*_1$ associated to the generators $F$, $G$, $H$ \eqref{FGH} of~$\got{sl}(2,\C)$, given by~\eqref{FUNDFGH}, verify
the commutation relations~\eqref{FGHCOM} with a minus sign. They are Killing vector fields corresponding to the Killing equation
\[ -X^2+y\pa_xX^1=0,\qquad \pa_xX^2+\pa_yX^1=0,\qquad -X^2+y\pa_yX^2=0 \]
associated to the metric \begin{gather*}
c_1\frac{{\rm d} x^2+{\rm d}
 y^2}{4y^2},\qquad c_1>0\end{gather*} on the Siegel upper half-plane $\mc{X}_1$, $x,y\in\R$, $y>0$.

If $ A \in\SL$ has the expression \eqref{MRA}, then the expression of $\Ad(A)$ with respect to the base $F$, $G$, $H$ \eqref{FGH} is \eqref{ADDAA}, and the group $\SL$ is unimodular.

The $\ad$ matrix in the base $H$, $F$, $G$ of $\got{sl}(2,\R)$ is given by~\eqref{addX}. The Killing form for~$\got{sl}(2,\R)$ is
\begin{gather}\label{ADS}K(X,Y)=4 \tr (XY).\end{gather}
The Killing form \eqref{ADS} is $\SL$-invariant and
 verifies \eqref{INVBXY}. Note that
\begin{gather}\label{BFGHB}
K(H,H)=K(F,G)=4,\end{gather}
and $K(X,Y)=0$ for all $X,Y\in\got{sl}(2,\R)$ different of the choice in \eqref{BFGHB}.

The Killing form for the compact group ${\rm SU}(2)$ is $K(X,Y)=4\tr(XY)$, and $\m=\langle X_2,X_3\rangle $.
 \end{Remark}

 \subsection[Killing vectors on $\R^2$]{Killing vectors on $\boldsymbol{\R^2}$}

The Perelomov's coherent state vectors (Glauber's coherent states) for {\it the oscillator group} are, see, e.g.,~\cite{sb6},
\[e_z:=\e^{z a^{\dagger}}e_0 , \]
and the scalar product is
\begin{gather}\label{SCH}(e_{\bar{z}},e_{\bar{z}'})=\e^ {z\bar{z}'}.
\end{gather}
The scalar product \eqref{SCH} of Glauber coherent states on~$\C$ implies the metric on $\R^2$ \eqref{MTR} ${\rm d} s^2_{\R^2}={\rm d} x_1^2+{\rm d} x_2^2$, where we have considered $z=x_1+\ii x_2$.

Let as consider a vector field on $\R^2$
\begin{gather}\label{X1X2}X=X^1\frac{\pa}{\pa x_1}+X^2\frac{\pa}{\pa x_2}.\end{gather}
We formulate a remark, see also in \cite[Section~4.6.7, p.~83]{mari}:
\begin{Remark}\label{Rem11} The Killing vectors on $\R^2$ associated with the metric~\eqref{MTR} are
\begin{gather}\label{KILPL}
A X+ B Y +C Z,\end{gather}
where
\begin{gather*} X=-x_2\frac{\pa}{\pa x_1}+ x_1\frac{\pa}{\pa x_2},\qquad
Y=\frac{\pa}{\pa x_1},\qquad Z= \frac{\pa}{\pa x_2},
\end{gather*}
verifying the commutation relations
\begin{gather*}
[X,Y]=-Z,\qquad [Y,Z]=0,\qquad [Z,X]= -Y.\end{gather*}
$-X$ is a rotation around $(0,0)\in\R^2$. $Y$ ($Z$) represents a translation around the $x_1$ (respectively~$x_2$) axis. The Killing vectors \eqref{KILPL} can be put
into correspondence with matrix representation~\eqref{bazae2}
\begin{gather}\label{bazae2}
a_1=\left(\begin{matrix}0& 0 & 1\\0 &0
 & 0\\0 &0
 &
 0\end{matrix}\right),\qquad a_2=\left(\begin{matrix}0& 0 & 0\\0 &0
 & 1\\0 &0
 &
 0\end{matrix}\right),\qquad a_3= \left(\begin{matrix}0& 1& 0\\-1 &0
 & 0\\0 &0 & 0\end{matrix}\right),\end{gather} of the
Lie algebra $\got{e}(2)$ in the representation \eqref{ge2}
\begin{gather}\label{ge2}
g=\left(\begin{matrix} \cos\theta & -\sin\theta & a\\\sin\theta
 &\cos\theta & b \\ 0& 0& 1\end{matrix}\right),\qquad
 \theta\in[0,2\pi),\qquad (a,b)\in \R^2,\end{gather}
of the group $E(2)$.

The Lie algebra of the Killing vectors of $\R^2$ with the Euclidean metric \eqref{MTR} is $\iota(\R^2)=\R^2\rtimes\got{so}(2)$, and the Euclidean group~$E(2)$ of the plane $\R^2$ is $E(2)=\R^2\rtimes O(2)$.
\end{Remark}

\section{Sasaki manifolds}\label{appendix4}
\subsection{Contact structures}\label{CSS}

\subsubsection{Maurer--Cartan equations}\label{S911}
Let $G$ be a Lie group with Lie algebra $\got{g}$, which has the generators $X_1,\dots,X_n$ verifying the commutation relations~\eqref{XIXJ}. To $X\in\got{g}$ we associate the left-invariant vector $\tilde{X}$ on $G$ such that $\tilde{X}_e=X$, see \cite[p.~99]{helg}.

Let $\omega_1,\dots,\omega_n$ be the 1-forms on $G$ determined by the equations $\langle \omega_i\,|\,\tilde{X}_j\rangle =\delta_{ij}$, $i,j=1,\dots,n$. Then we have the
{\it Maurer--Cartan} equations, see, e.g., \cite[Proposition~7.2, p.~137]{helg}:
\begin{gather*}
{\rm d} \omega_i=-\frac{1}{2}\sum_{j,k=1}^nc^i_{jk}\omega_j\wedge\omega_k,
\end{gather*}
where $c^i_{jk}$ are the structure constants \eqref{XIXJ}.

If $G$ is embedded in ${\rm GL}(n)$ by a matrix valued map $g=(g)_{ij}$, $i,j=1,\dots,n $, then let $\lambda$ ($L$) denote a left-invariant one-form (vector field) on $G $ and
$\rho$ ($R$) a right-invariant one-form (respectively, vector field) on $G$. We have the relations
\begin{subequations}\label{NR111}
\begin{gather}
 g^{-1}{\rm d} g =X_i\lambda_i,\qquad {\rm d} g g^{-1}= X_i\rho_i,\label{LILA}\\
\langle \lambda_a\,|\,L_b\rangle =\delta_{ab},\qquad \langle \rho_a\,|\,R_b\rangle =\delta_{ab},\\
{\rm d} \lambda_a =-\frac{1}{2}c^a_{bc}\lambda_b\wedge \lambda_c,\qquad {\rm d} \rho_a= \frac{1}{2}c^a_{bc}\rho_b\wedge\rho_c,\\
[L_a,L_b] =c^c_{ab}L_c,\qquad [R_a,R_b]=-c^c_{ab}R_c.
\end{gather}
\end{subequations}

\subsubsection{Almost contact manifolds}
Following Sasaki \cite{sas} and \cite[Definition 6.2.5]{boga}, we use
\begin{deff}\label{D9}
 Let $M_{m}$ be a $m=(2n+1)$-dimensional manifold. $M_{m}$ has a {\it $($strict$)$ almost contact structure} $(\Phi,\xi,\eta)$ (or $(\xi,\eta,\Phi)$) if there exists a~$(1,1)$-tensor field $\Phi$, a contravariant vector field ({\it Reeb vector field}, or {\it characteristic vector field})~$\xi$, and a one-form~$\eta$
\begin{gather}\label{DEFACD}
\Phi=\Phi^i_j\frac{\pa}{\pa x^i}\otimes{\rm d} x^j,\qquad \xi=\xi^i\frac{\pa}{\pa x^i},\qquad \eta=\eta_i{\rm d}
x^i,\end{gather} verifying the relations
\begin{subequations}\label{ACM}
\begin{gather}\langle \eta|\xi\rangle =1, \qquad \text{or} \qquad \eta\lrcorner \xi=1, \qquad \text{or}\qquad \eta\xi=1,\qquad \text{or}\qquad \xi^i\eta_i=1,\label{ACM1}\\
 \Phi^2X =-X+\eta(X)\xi\qquad \text{or}\qquad \Phi^2 =-\mathbbm{1}_m+\xi\otimes\eta,\qquad \text{or}\qquad \Phi^i_j\Phi^j_k=-\delta^i_k+\xi^i\eta_k, \!\!\!\label{ACM2}
\end{gather}
\end{subequations}
where we have used the convention \begin{gather*}\xi^t = \big(\xi^1, \dots, \xi^m\big)\in
M(1,m,\R),\qquad \eta = (\eta_1,\dots,\eta_m) \in M(1 , m , \R ),\\
\Phi=\big(\Phi^i_j\big)\in M(m,\R).\end{gather*}
\end{deff}
Manifolds $M$ with a structure $(\Phi,\xi,\eta)$ as in Definition~\eqref{ACM} are called {\it almost contact mani\-folds}.

Sasaki has proved, see \cite[Theorem 1.1]{sas} and \cite[equation~(5.16)]{sh}:
\begin{Theorem}\label{THM11}
For an almost contact structure $(\Phi,\xi,\eta)$, the following relations hold
\begin{gather}\label{679}
\Phi^i_j \xi^j = 0, \qquad\! \text{or} \qquad\! \Phi \xi=0,\qquad\! \Phi^i_j\eta_i = 0, \qquad\! \text{or} \qquad\! \eta \Phi =0,\qquad\!
\operatorname{Rank} \big(\Phi^i_j\big) = 2n.\!\!\!
\end{gather}
Let $ M_{2n+1}$ be a differentiable manifold with almost contact structure $(\Phi,\xi,\eta)$. Then there exists a~positive Riemannian metric $g$ such that
\begin{gather*}
g(\xi,X) =\eta(X), \qquad \text{or} \qquad \eta_i = g_{ij}\xi^j, \qquad \text{or} \qquad \eta^t =g\xi,\\ 
g(\Phi X,\Phi Y) = g(X,Y) -\eta(X)\eta(Y),\qquad \text{or} \qquad \Phi^i_h g_{ij}\Phi^j_k = g_{hk} - \eta_h\eta_k, \qquad \text{or}\\
 \Phi^tg\Phi=g-\eta^t\otimes\eta. 
\end{gather*}
\end{Theorem}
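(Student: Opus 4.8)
The plan is to separate the statement into its purely algebraic part, namely the relations \eqref{679} and the rank assertion, and the existence of a compatible metric $g$; each is a pointwise claim on every tangent space $T_pM$ that then globalises by smoothness of $\Phi$, $\xi$, $\eta$. Everything will be extracted from the two defining identities, \eqref{ACM1} in the form $\eta(\xi)=1$ and \eqref{ACM2} in the form $\Phi^2=-\mathbbm{1}+\xi\otimes\eta$.

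First I would prove $\Phi\xi=0$. Applying \eqref{ACM2} to $\xi$ and using \eqref{ACM1} gives $\Phi^2\xi=-\xi+\eta(\xi)\xi=0$. Setting $u:=\Phi\xi$, we get $\Phi u=\Phi^2\xi=0$, so feeding $u$ into \eqref{ACM2} yields $0=\Phi^2u=-u+\eta(u)\xi$, i.e.\ $\Phi\xi=\eta(\Phi\xi)\xi$; applying $\Phi$ once more gives $0=\Phi^2\xi=\eta(\Phi\xi)^2\xi$, which forces $\eta(\Phi\xi)=0$ and hence $\Phi\xi=0$. For $\eta\circ\Phi=0$ I would note that applying $\eta$ to \eqref{ACM2} gives $\eta\circ\Phi^2=0$, while $\Phi\xi=0$ combined with \eqref{ACM2} gives $\Phi^3=\Phi(-\mathbbm{1}+\xi\otimes\eta)=-\Phi$; therefore $\eta(\Phi X)=-\eta(\Phi^3X)=-(\eta\circ\Phi^2)(\Phi X)=0$. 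For the rank I would use the splitting $T_pM=\langle\xi\rangle\oplus\mathcal{D}$ with $\mathcal{D}:=\ker\eta$, which is genuine since $\eta(\xi)=1$ shows $\eta\neq0$ and $\xi\notin\mathcal{D}$. As $\eta\circ\Phi=0$, $\Phi$ preserves $\mathcal{D}$, and on $\mathcal{D}$ the identity \eqref{ACM2} reads $\Phi^2=-\mathbbm{1}$, so $\Phi|_{\mathcal{D}}$ is invertible; since $\ker\Phi\cap\mathcal{D}=0$ and $\Phi\xi=0$, we get $\ker\Phi=\langle\xi\rangle$ and $\operatorname{Rank}\Phi=(2n+1)-1=2n$.

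The existence of $g$ is where the real work lies, and the main obstacle is to produce a single metric satisfying the normalisation $g(\xi,\cdot)=\eta$ and the compatibility $g(\Phi\cdot,\Phi\cdot)=g(\cdot,\cdot)-\eta\otimes\eta$ simultaneously, starting from no metric at all. The key observation is that, relative to the splitting above, the compatibility condition restricted to $\mathcal{D}$ merely says that $J:=\Phi|_{\mathcal{D}}$ is an isometry of an almost complex structure, while along the $\xi$-direction it is already dictated by the normalisation. My plan is the standard averaging trick: start from an arbitrary auxiliary Riemannian metric $h$, restrict it to $\mathcal{D}$, and symmetrise $g_{\mathcal{D}}(X,Y):=\tfrac12\big(h(X,Y)+h(JX,JY)\big)$; using $J^2=-\mathbbm{1}$ on $\mathcal{D}$ this is positive definite and satisfies $g_{\mathcal{D}}(JX,JY)=g_{\mathcal{D}}(X,Y)$. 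Then I would set $g(X,Y):=g_{\mathcal{D}}\big(X-\eta(X)\xi,\,Y-\eta(Y)\xi\big)+\eta(X)\eta(Y)$.

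I expect the final verification to be short, now that $\Phi\xi=0$ and $\eta\circ\Phi=0$ are available. From $\xi-\eta(\xi)\xi=0$ one reads off $g(\xi,X)=\eta(X)$. Since $\eta(\Phi X)=0$ and $\Phi\xi=0$, one has $\Phi X=J\big(X-\eta(X)\xi\big)$, so $g(\Phi X,\Phi Y)=g_{\mathcal{D}}\big(J(\cdot),J(\cdot)\big)=g_{\mathcal{D}}(\cdot,\cdot)=g(X,Y)-\eta(X)\eta(Y)$, which is the compatibility identity. Positivity is immediate: $g(X,X)=0$ forces both $X-\eta(X)\xi=0$ and $\eta(X)=0$, hence $X=0$. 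Finally, smoothness of $g$ follows because $h$, $\Phi$, $\xi$, $\eta$ and the projection $X\mapsto X-\eta(X)\xi$ onto $\mathcal{D}$ all vary smoothly over $M$, so the pointwise construction yields a global Riemannian metric of the required form.
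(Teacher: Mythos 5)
Your proof is correct and complete. The paper itself gives no proof of this theorem---it is quoted from Sasaki's lecture notes and Sasaki--Hatakeyama---and your argument (deriving $\Phi\xi=0$, $\eta\circ\Phi=0$ and $\operatorname{Rank}(\Phi)=2n$ purely from $\eta(\xi)=1$ and $\Phi^2=-\mathbbm{1}+\xi\otimes\eta$, then averaging an auxiliary metric over $J=\Phi|_{\operatorname{Ker}\eta}$ and extending by $\eta\otimes\eta$) is precisely the classical one in those sources.
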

 If we put
\begin{gather}\label{2PHI}\hat{\Phi}_{ij}:=g_{ih}\Phi^h_j, \qquad \text{or} \qquad \hat{\Phi}:=g\Phi,\end{gather}
then $ \hat{\Phi}_{ij}=-\hat{\Phi}_{ji}$.

$\hat{\Phi}_{ij}$ is called the {\it associated skew-symmetric tensor of the almost contact metric structure}, see also~\eqref{DFI} below.

\subsubsection{Contact structures}
 Following \cite{boy}, we define
\begin{deff}\label{DFF11}
Let $M_{2n+1}$ be a $C^{\infty}$-manifold of dimension $(2n+1)$. A {\it contact structure} can be given by {\it a codimension one subbundle}~$\mc{D}$ {\it of the tangent bundle} $TM$ {\it which is as far from being integrable as possible}.

Alternatively, the codimension one subbundle $\mc{D}$ of $TM$ can be given as {\it the kernel of a~smooth} 1-form~$\eta$~-- the {\it contact form}, $\mc{D}:=\operatorname{Ker}(\eta)$~-- which satisfies the condition
\begin{gather}\label{CNT}
\eta\wedge ({\rm d} \eta )^n\not=0,
\end{gather}
and from \eqref{CNT} it follows that the distribution $\mc{D}$ is not integrable.
\end{deff}$\mc{D}$ is called {\it the contact distribution} of the {\it strict contact manifold} $(M,\eta)$. {\it A~contact structure } on $M$ is an equivalence class of such 1-forms, where $\eta'\sim \eta$ if there is a nowhere vanishing function on $M$ such that $\eta'=f\eta$, cf.\ \cite[Definition~6.1.7]{boga}.

The tangent space of $M$ has the orthogonal decomposition, see, e.g., \cite[p.~9]{Ja},
\begin{gather*}
TM=\mathcal{D}\oplus \langle \xi\rangle, \qquad \text{where}\qquad \mathcal{D}=\operatorname{Ann}(\eta)=\{X\in \text{TM}\,|\,\eta(X)=0\}.\end{gather*}

\begin{Remark}\label{REM11}The codimension one subbundle $\mc{D}=\operatorname{Ker} (\eta)$ of $TM$ has an almost complex structure $J=\Phi|_{\mc{D}}$.
\end{Remark}

Boothby and Wang \cite{boo} have defined
\begin{deff}\label{BOO} A contact manifold $M$ is said to be {\it homogeneous} if there is a connected Lie group $G$ acting transitively and effectively as a group of differentiable homeomorphisms on~$M$ which leave $\eta$ invariant.
\end{deff}
If the 1-form $\eta$ has the expression given in~\eqref{DEFACD}, then
\begin{gather}\label{DFI}
{\rm d} \eta=\hat{\Phi}_{ij}{\rm d} x^i\wedge {\rm d} x^j, \qquad \text{where} \qquad -2 \hat{\Phi}_{ij}=\pa_i\eta_j-\pa_j\eta_i.
\end{gather}
Note that in \cite[equation~(3.4)]{sas} the minus sign was omitted.

Sasaki has proved, see \cite[Theorem 3.1]{sas}:
\begin{Theorem}\label{TH31} Let $M_{2n+1}$ be a differentiable manifold with $\eta$ the contact form. Then we can find an almost contact metric structure $(\Phi,\xi,\eta,g)$ such that
\begin{gather*}
{\rm d} \eta (X,Y)= g(X,\Phi(Y)),
\end{gather*}
i.e., \eqref{2PHI} is verified with $\hat{\Phi}$ given by \eqref{DFI}.
\end{Theorem}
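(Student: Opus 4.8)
The plan is to build the structure in three stages---first the Reeb field $\xi$, then a compatible complex structure and metric on the contact distribution $\mc{D}=\ker\eta$, and finally their extension to all of $TM$---after which the defining relations \eqref{ACM1}, \eqref{ACM2} and those of Theorem~\ref{THM11} follow by a routine splitting argument. First I would produce $\xi$ from the contact condition \eqref{CNT}: since $\eta\wedge({\rm d}\eta)^n\neq 0$, the $2$-form ${\rm d}\eta$ has constant rank $2n$ on the $(2n+1)$-dimensional $TM$, so its kernel is a line field; I take the unique section $\xi$ of this line field normalized by $\eta(\xi)=1$, which simultaneously gives ${\rm d}\eta(\xi,\cdot)=0$ and the first relation in \eqref{ACM1}. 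This yields the splitting $TM=\mc{D}\oplus\langle\xi\rangle$ with $\mc{D}=\ker\eta$, on which ${\rm d}\eta$ restricts fibrewise to a nondegenerate skew form.

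Next I would fix an arbitrary auxiliary Riemannian metric $g_0$ on $M$ and work on the symplectic vector bundle $(\mc{D},{\rm d}\eta|_{\mc{D}})$. Define the bundle endomorphism $A$ of $\mc{D}$ by ${\rm d}\eta(X,Y)=g_0(X,AY)$; skew-symmetry of ${\rm d}\eta$ together with symmetry of $g_0$ force $A$ to be skew-adjoint for $g_0$, so $-A^2$ is symmetric and positive definite. Setting $P:=\sqrt{-A^2}$ (the positive square root) and $J:=P^{-1}A=AP^{-1}$, a short computation using that $P$ commutes with $A$ gives $J^2=-\mathbbm{1}$ and $J^{\ast}=-J$, i.e., $J$ is an orthogonal almost complex structure; I then declare the fibre metric $g|_{\mc{D}}(X,Y):=g_0(PX,Y)$, for which $g(JX,JY)=g(X,Y)$ and ${\rm d}\eta(X,Y)=g(X,JY)$ both hold on $\mc{D}$.

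I would then extend these data to $TM$ by setting $\Phi\xi=0$, $\Phi|_{\mc{D}}=J$, and $g:=g|_{\mc{D}}\oplus(\eta\otimes\eta)$, so that $\xi$ is a unit vector $g$-orthogonal to $\mc{D}$. The verification is immediate from the splitting: for $X\in\mc{D}$ one has $\Phi^2X=J^2X=-X$ while $\Phi^2\xi=0$, which is exactly \eqref{ACM2}; the relations $g(\xi,X)=\eta(X)$ and $g(\Phi X,\Phi Y)=g(X,Y)-\eta(X)\eta(Y)$ of Theorem~\ref{THM11} follow by writing $X=X_{\mc{D}}+\eta(X)\xi$ and using $\Phi X=JX_{\mc{D}}$. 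Finally, decomposing $X,Y$ the same way and using ${\rm d}\eta(\xi,\cdot)=0$ together with the identity on $\mc{D}$ gives ${\rm d}\eta(X,Y)=g(X,\Phi Y)$, the target relation, which via \eqref{2PHI} and \eqref{DFI} identifies $\hat\Phi$ with $g\Phi$ (the overall sign and the factor conventions being matched by the choice ${\rm d}\eta(X,Y)=g_0(X,AY)$, equivalently $J$ versus $-J$), while \eqref{679} records $\Phi\xi=0$ and $\operatorname{Rank}(\Phi)=2n$.

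The main obstacle is the polar step that turns ${\rm d}\eta|_{\mc{D}}$ into an honest almost complex structure: I must check that $-A^2$ is positive definite (hence admits a well-defined positive square root $P$), that $P$ commutes with $A$ so that $J=P^{-1}A$ satisfies $J^2=-\mathbbm{1}$, and---crucially---that $P=\sqrt{-A^2}$ depends smoothly on the base point, so that $\Phi$ and $g$ are smooth global fields. This smoothness is the delicate ingredient, but it follows from the standard fact that the positive square root of a smoothly varying positive-definite symmetric endomorphism is smooth (for instance via a functional-calculus or contour-integral representation). Everything else---the existence of $\xi$, the two extensions, and the sign bookkeeping---is routine once this step is in place.
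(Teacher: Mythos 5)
Your construction is correct. Note, however, that the paper does not prove Theorem~\ref{TH31} at all: it is quoted from Sasaki's lecture notes \cite[Theorem~3.1]{sas} and used as a black box, so there is no in-paper argument to compare against. What you have written is precisely the standard proof of that cited result (going back to Sasaki and Hatakeyama, and reproduced in Blair's book \cite{BL}): extract the Reeb field $\xi$ as the $\eta$-normalized section of the line field $\ker({\rm d}\eta)$, which is transverse to $\mc{D}=\ker(\eta)$ because $\eta\wedge({\rm d}\eta)^n\neq 0$ forces ${\rm d}\eta|_{\mc{D}}$ to be nondegenerate; then run the polar decomposition $A=PJ$ of the $g_0$-skew endomorphism representing ${\rm d}\eta|_{\mc{D}}$ to manufacture $J$ with $J^2=-\mathbbm{1}$ and the compatible fibre metric $g_0(P\,\cdot,\cdot)$; finally extend by $\Phi\xi=0$ and $g=g|_{\mc{D}}\oplus\eta\otimes\eta$. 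All the steps you flag as delicate are handled correctly: $-A^2$ is positive definite because $A$ is invertible and skew-adjoint, $P=\sqrt{-A^2}$ commutes with $A$ (it is a limit of polynomials in $A^2$) so $J^2=P^{-2}A^2=-\mathbbm{1}$, and smoothness of the fibrewise square root is the standard functional-calculus fact. The only residual point is bookkeeping against the paper's conventions in \eqref{DFI}, where ${\rm d}\eta=\hat{\Phi}_{ij}{\rm d} x^i\wedge{\rm d} x^j$ with $-2\hat{\Phi}_{ij}=\pa_i\eta_j-\pa_j\eta_i$, so one must fix the sign of $J$ (and track the factor of $2$ in the wedge convention) to land exactly on ${\rm d}\eta(X,Y)=g(X,\Phi(Y))$ with $\hat{\Phi}=g\Phi$ as in \eqref{2PHI}; you have noted this explicitly, and it does not affect the validity of the argument.
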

$M_{2n+1}$ from Theorem \ref{TH31} is said to be a {\it contact $($Riemannian$)$ manifold} associated with $\eta$.

\subsection{Structures on cones}\label{SCC}
Following \cite[p.~201]{boga}, we define
\begin{deff}\label{D14} Let $(M,g)$ be a smooth Riemannian manifold and let us consider the cone $C(M):=M\times \R^+$ endowed with the Riemannian metric
\begin{gather*}
\bar{g}={\rm d} r^2 + r^2g, \qquad r\in \R^+.\end{gather*}
 $(C(M),\bar{g})$ is called the {\it Riemannian cone} (or {\it metric cone}) on~$M$.
\end{deff}

Let $M$ be endowed with the almost contact structure $(\Phi,\xi,\eta)$. Let us define a section $\bar{\Phi}$ of the endomorphism bundle of the $TC(M)=TM\oplus T\R^+$ as
\begin{gather}\label{Ecxxy}
\bar{\Phi}Y=\Phi Y+\eta(Y)\Psi,\qquad \bar{\Phi}\Psi=-\xi,\qquad \text{where} \qquad Y\in TM,\qquad \Psi=r\frac{\pa}{\pa r}\in T\R^+.\end{gather} Then
\begin{Remark}\label{REM9}In the notation \eqref{Ecxxy}, $\bar{\Phi}$ defines an almost complex structure on $TC(M)$.
\end{Remark}

Let \begin{gather*}
\omega:= {\rm d}\big(r^2\eta\big).\end{gather*} In accord with \cite[Proposition 6.5.5]{boga}, we have a {\it symplectization} (or {\it symplectification}) of~$M$:
\begin{Proposition} There is one-to-one correspondence between the contact metric structures on $(M,\xi,\eta,g,\Phi)$ and the almost K\"ahler structures $(C(M),\omega,\bar{g},\bar{\Phi})$.
\end{Proposition}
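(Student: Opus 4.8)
The plan is to prove the asserted bijection by writing down both directions explicitly: the forward map sends a contact metric structure $(\xi,\eta,g,\Phi)$ to the triple $(\bar g,\bar\Phi,\omega)$ on the cone $C(M)=M\times\R^+$ already introduced in Definition~\ref{D14} and in~\eqref{Ecxxy}, together with $\omega={\rm d}(r^2\eta)$; the backward map restricts an almost K\"ahler cone structure to the slice $M\times\{1\}=\{r=1\}$ and reads off the contact data. First I would check that the forward map indeed lands in the class of almost K\"ahler structures, and then that restriction is a two-sided inverse.

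For the forward direction, the identity $\bar\Phi^2=-\mathbbm{1}$ is exactly Remark~\ref{REM9}, so $\bar\Phi$ is an almost complex structure on $TC(M)=TM\oplus T\R^+$. Compatibility $\bar g(\bar\Phi X,\bar\Phi Y)=\bar g(X,Y)$ I would verify by decomposing $X=Y_1+a\Psi$, $Y=Y_2+b\Psi$ with $Y_1,Y_2\in TM$ and $\Psi=r\pa_r$, using $\bar g(\Psi,\Psi)=r^2$, $\bar g(Y_i,\Psi)=0$, the contact-metric relation $g(\Phi Y_1,\Phi Y_2)=g(Y_1,Y_2)-\eta(Y_1)\eta(Y_2)$ and the identities $\Phi\xi=0$, $\eta\Phi=0$ of Theorem~\ref{THM11}; the cross terms cancel and both sides reduce to $r^2\bigl(g(Y_1,Y_2)+ab\bigr)$. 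Thus $(C(M),\bar g,\bar\Phi)$ is almost Hermitian. It then remains to identify the fundamental two-form $\Omega(X,Y)=\bar g(X,\bar\Phi Y)$ with $\omega$. Expanding $\omega={\rm d}(r^2\eta)=2r\,{\rm d} r\wedge\eta+r^2{\rm d}\eta$ on a pair $X,Y$ and using the defining relation ${\rm d}\eta(X,Y)=g(X,\Phi Y)$ of Theorem~\ref{TH31}, one finds $\omega$ equals $\Omega$ up to the overall normalization fixed by the convention~\eqref{DFI}. Since $\omega$ is exact it is closed, so the fundamental form is symplectic and the structure is almost K\"ahler.

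For the backward direction I would start from an almost K\"ahler structure on $C(M)$ whose metric is the cone metric $\bar g={\rm d} r^2+r^2 g$, set $\xi:=-\bar\Phi\Psi|_{r=1}$ (which lies in $TM$ because $\bar\Phi\Psi=-\xi$ in~\eqref{Ecxxy}), and recover $\Phi$ and $\eta$ by splitting $\bar\Phi Y=\Phi Y+\eta(Y)\Psi$ into its $TM$-part and its radial part for $Y\in TM$. The contact metric axioms $\eta(\xi)=1$, $\Phi^2=-\mathbbm{1}+\xi\otimes\eta$ and $g(\Phi X,\Phi Y)=g(X,Y)-\eta(X)\eta(Y)$ then follow by reversing the computations of the forward direction, while the contact condition $\eta\wedge({\rm d}\eta)^n\neq0$ of~\eqref{CNT} follows from the nondegeneracy of $\omega$, i.e.\ from $\omega^{n+1}=\bigl({\rm d}(r^2\eta)\bigr)^{n+1}$ being a volume form on $C(M)$. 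Checking that the two constructions are mutually inverse is then a matter of bookkeeping on the splitting $TC(M)=TM\oplus\langle\Psi\rangle$.

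I expect the main obstacle to be twofold. The delicate computational point is the fundamental-form identity $\Omega=\omega$, where the radial contributions $2r\,{\rm d} r\wedge\eta$ must be tracked carefully and the normalization relating ${\rm d}\eta$ to $\Phi$ (Theorem~\ref{TH31} versus~\eqref{DFI}) must be fixed consistently; a stray factor of two here is the usual pitfall, and it only rescales $\omega$ without affecting the almost K\"ahler conclusion. The conceptual obstacle is to state the backward map precisely: one must require the almost K\"ahler cone to be homogeneous with respect to the Euler field $\Psi=r\pa_r$ (so that $\omega$ is homogeneous of degree two and $\bar g$ is the genuine cone metric), since this scaling invariance is exactly what guarantees that the restriction to $\{r=1\}$ determines the whole structure and hence that the correspondence is bijective.
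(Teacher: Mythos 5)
Your proposal is correct, but there is nothing in the paper to compare it against: the author states this Proposition without proof, simply citing Boyer--Galicki [Proposition~6.5.5] as the source, so your sketch supplies an argument the paper omits. What you write is the standard symplectization argument and the computations check out: $\bar\Phi^2=-\mathbbm{1}$ follows from Remark~\ref{REM9}, your verification that both sides of the compatibility identity reduce to $r^2\bigl(g(Y_1,Y_2)+ab\bigr)$ is right (it uses exactly $g(\xi,X)=\eta(X)$, $\eta\circ\Phi=0$ and the contact-metric relation from Theorem~\ref{THM11}), and exactness of $\omega={\rm d}\big(r^2\eta\big)$ gives closedness for free once the fundamental form is identified with $\omega$. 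The two caveats you flag are precisely the right ones. First, the identification $\Omega(X,Y)=\bar g\big(X,\bar\Phi Y\big)=\omega(X,Y)$ holds only if the wedge/exterior-derivative normalization is the ``one-half'' convention implicit in~\eqref{DFI} and Theorem~\ref{TH31}; with the determinant convention the radial term $2r\,{\rm d} r\wedge\eta$ picks up a factor of~$2$ relative to the $r^2{\rm d}\eta$ term, so this is not a uniform rescaling of $\omega$ and does need to be fixed consistently, as you say. Second, the inverse map only makes sense on almost K\"ahler structures of cone type (Euler-field homogeneous metric and degree-two homogeneous $\omega$), which is how the Proposition must be read for the correspondence to be a genuine bijection; making that hypothesis explicit, as you do, is the one point where the bare statement needs interpretation.
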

According to \cite[Definitions 6.4.7, 6.5.7 and 6.5.13]{boga}:
\begin{deff}\label{DF17}
An almost contact structure $(\xi,\eta,\Phi)$ is {\it normal} if the corresponding structure $\bar{\Phi}$ on $C(M)$ is integrable. A~normal contact metric structure $\mc{S}=(M,\xi,\eta,\Phi,g)$ is called a~{\it Sasakian} structure. $M$ has a~$K$-{\it contact structure} if $\xi$ is a Killing vector for $g$.
\end{deff}

Following \cite[p.~47]{BL}, let us introduce
\begin{deff}Let $h$ be a tensor field of type $(1,1)$. Then the Nijenhuis torsion $[h,h]$ of $h$ is the tensor field of type $(1,2)$ given by
\begin{gather*}
[h,h](X,Y)=h^2[X,Y]+[hX,hY]-h[hX,Y]-h[X,hY].\end{gather*}
\end{deff}
Let us define the $(1,2)$-tensor
\begin{gather}\label{NN1}N^1:=[\Phi,\Phi]+2{\rm d} \eta\otimes\xi.\end{gather}
According with \cite[Theorem 6.5.9]{boga}:
 \begin{Theorem}\label{THM14} An almost contact structure $(\xi,\eta,\Phi)$ on $M$ is normal if and only if $N^1=0$. Then $(C(M),\bar{g},\omega,\bar{\Phi})$ is K\"ahler.
\end{Theorem}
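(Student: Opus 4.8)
The plan is to reduce the \emph{normality} condition to a Nijenhuis computation on the cone and then read off the K\"ahler conclusion from the symplectization picture. By Definition~\ref{DF17}, $(\xi,\eta,\Phi)$ is normal precisely when the almost complex structure $\bar{\Phi}$ of Remark~\ref{REM9} on $C(M)=M\times\R^+$ is integrable. Since integrability of an almost complex structure is equivalent, by the Newlander--Nirenberg theorem, to the vanishing of its Nijenhuis torsion, the whole statement comes down to showing $[\bar{\Phi},\bar{\Phi}]\equiv 0$ on $C(M)$ if and only if $N^1=0$ on $M$, and then to checking that this integrability upgrades the almost K\"ahler cone to a K\"ahler one.

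First I would compute $[\bar{\Phi},\bar{\Phi}]$ using the splitting $TC(M)=TM\oplus\langle\Psi\rangle$, $\Psi=r\pa_r$, and the defining relations \eqref{Ecxxy}, namely $\bar{\Phi}Y=\Phi Y+\eta(Y)\Psi$ and $\bar{\Phi}\Psi=-\xi$. Evaluating the bracket separately on pairs $(X,Y)$ and $(X,\Psi)$ with $X,Y$ lifted from $M$ (so that $[\Psi,X]=0$) and expanding with the almost contact identities \eqref{ACM1}, \eqref{ACM2}, \eqref{679} collects the four classical Sasaki tensors: the tensor $N^1$ of \eqref{NN1}, together with $N^2(X,Y)=(\mc{L}_{\Phi X}\eta)(Y)-(\mc{L}_{\Phi Y}\eta)(X)$, $N^3=\mc{L}_{\xi}\Phi$, and $N^4=\mc{L}_{\xi}\eta$. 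The outcome of this expansion is that $[\bar{\Phi},\bar{\Phi}]=0$ if and only if $N^1=N^2=N^3=N^4=0$.

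The main obstacle is the purely algebraic implication, due to Sasaki and Hatakeyama, that $N^1=0$ already forces $N^2=N^3=N^4=0$; this is exactly what makes the single condition in the theorem sufficient. I would establish it by specializing the vanishing of $N^1$: putting one argument equal to $\xi$ and using $\Phi\xi=0$ and $\eta\circ\Phi=0$ from \eqref{679} isolates and kills $N^3$ and $N^4$, while applying $\Phi$ to $N^1(X,Y)$ and contracting with $\eta$ recovers and kills $N^2$. This bookkeeping, careful but elementary, is where the real work sits; combined with the previous paragraph it yields the equivalence ``$\bar{\Phi}$ integrable $\Leftrightarrow N^1=0$''.

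For the final sentence I would invoke the symplectization Proposition stated just before Definition~\ref{DF17}: $(C(M),\omega,\bar{g},\bar{\Phi})$ is always an \emph{almost} K\"ahler structure, with $\omega={\rm d}(r^2\eta)$. Being exact, $\omega$ is automatically closed, so the only ingredient separating almost K\"ahler from K\"ahler is integrability of $\bar{\Phi}$. Hence, once $N^1=0$ supplies this integrability, the general fact that an almost K\"ahler manifold with integrable almost complex structure is K\"ahler applies, and $(C(M),\bar{g},\omega,\bar{\Phi})$ is K\"ahler, as asserted.
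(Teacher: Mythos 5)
Your outline is correct, but there is nothing in the paper to compare it against: Theorem~\ref{THM14} is quoted from \cite[Theorem~6.5.9]{boga} and the paper supplies no proof of it (it only proves the coordinate formula for $N^1$ in Lemma~\ref{LLL}). Your sketch is the standard argument, essentially the one in the cited source and in Blair~\cite{BL}: by Definition~\ref{DF17} normality is the integrability of the $\bar{\Phi}$ of \eqref{Ecxxy}, Newlander--Nirenberg converts this to $[\bar{\Phi},\bar{\Phi}]=0$, the splitting $TC(M)=TM\oplus\langle\Psi\rangle$ with $[\Psi,X]=0$ for lifted $X$ turns that into the simultaneous vanishing of the four Sasaki--Hatakeyama tensors $N^1,\dots,N^4$, and the one genuinely nontrivial step --- which you correctly isolate but only sketch --- is the algebraic lemma that $N^1=0$ already forces $N^2=N^3=N^4=0$ (obtained, as you say, by inserting $\xi$ into $N^1$ and using \eqref{679}). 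The K\"ahler conclusion then follows exactly as you argue, since $\omega={\rm d}\big(r^2\eta\big)$ is exact and the symplectization Proposition already provides the almost K\"ahler structure. One small caveat: the second sentence of the theorem tacitly assumes the compatible metric of Theorem~\ref{TH31}, i.e., a contact metric structure rather than a bare almost contact structure, since otherwise $\bar{g}$ and the compatibility $\omega(\cdot,\cdot)=\bar{g}(\bar{\Phi}\cdot,\cdot)$ are not available; this looseness is inherited from the statement itself and your appeal to the symplectization Proposition is the right way to discharge it.
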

\begin{Lemma}\label{LLL}The components of the tensor \eqref{NN1} are given by
\begin{gather}\label{TENC}
\big(N^1\big)^i_{jk}=\Phi^h_j\frac{\pa \Phi^i_k}{\pa x^h}-\Phi^h_k\frac{\pa \Phi^i_j}{\pa x^h} +\Phi^i_h\left(\frac{\pa \Phi^h_j}{\pa x^k}-\frac{\pa \Phi^h_k}{\pa x^j}\right)+2\hat{\Phi}_{jk}\xi^i.
\end{gather}
\end{Lemma}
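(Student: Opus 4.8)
The plan is to compute $N^1$ directly in a local coordinate chart, evaluating the $(1,2)$-tensor $N^1=[\Phi,\Phi]+2\,{\rm d}\eta\otimes\xi$ of~\eqref{NN1} on the commuting coordinate fields $\frac{\pa}{\pa x^j}$, $\frac{\pa}{\pa x^k}$. The only tools needed beyond the definitions are the Leibniz rule $[fX,gY]=fg[X,Y]+f(Xg)Y-g(Yf)X$ for vector fields and the vanishing $\big[\frac{\pa}{\pa x^h},\frac{\pa}{\pa x^k}\big]=0$. Because the latter bracket is zero, the term $\Phi^2[X,Y]$ in the Nijenhuis torsion drops out at once, leaving the three bracket terms to handle.

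First I would expand $[\Phi\pa_j,\Phi\pa_k]$, writing $\Phi\pa_j=\Phi^h_j\pa_h$ and $\Phi\pa_k=\Phi^h_k\pa_h$ and applying the Leibniz rule; this yields the $i$-th component $\Phi^h_j\pa_h\Phi^i_k-\Phi^h_k\pa_h\Phi^i_j$. Next I would compute $\Phi[\Phi\pa_j,\pa_k]$ and $\Phi[\pa_j,\Phi\pa_k]$: here $[\Phi\pa_j,\pa_k]=-(\pa_k\Phi^h_j)\pa_h$ and $[\pa_j,\Phi\pa_k]=(\pa_j\Phi^h_k)\pa_h$, and a further application of $\Phi$ contracts the free index, giving the $i$-th components $-\Phi^i_h\pa_k\Phi^h_j$ and $\Phi^i_h\pa_j\Phi^h_k$. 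Assembling these according to $[\Phi,\Phi](X,Y)=\Phi^2[X,Y]+[\Phi X,\Phi Y]-\Phi[\Phi X,Y]-\Phi[X,\Phi Y]$ produces exactly the first three terms of~\eqref{TENC}, namely $\Phi^h_j\pa_h\Phi^i_k-\Phi^h_k\pa_h\Phi^i_j+\Phi^i_h\big(\pa_k\Phi^h_j-\pa_j\Phi^h_k\big)$.

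It remains to account for $2\,{\rm d}\eta\otimes\xi$. Reading off the antisymmetric coefficient $\hat\Phi_{jk}$ of ${\rm d}\eta$ from~\eqref{DFI} and forming the $(1,2)$-tensor ${\rm d}\eta\otimes\xi$, the $i$-th component of $2({\rm d}\eta\otimes\xi)\big(\pa_j,\pa_k\big)$ is $2\hat\Phi_{jk}\xi^i$, the final term of~\eqref{TENC}. Summing the Nijenhuis contribution and this term gives the claimed expression for $\big(N^1\big)^i_{jk}$.

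The computation is purely routine, so the only genuine difficulty is bookkeeping: keeping the summed dummy index $h$ distinct from the free indices $i,j,k$, and tracking the signs forced by the alternation of plus and minus in the Nijenhuis definition against those produced by the Leibniz rule. The one place where a convention must be fixed rather than derived is the passage from ${\rm d}\eta$ to the coefficient $\hat\Phi_{jk}$; once the normalization of~\eqref{DFI} is used consistently, the coefficient $2$ in the last term of~\eqref{TENC} follows.
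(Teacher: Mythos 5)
Your proposal is correct and follows essentially the same route as the paper: a direct coordinate expansion of the four Nijenhuis terms plus $2\,{\rm d}\eta\otimes\xi$, with the signs and the contraction pattern matching \eqref{TENC} exactly. The only difference is that you evaluate on the commuting coordinate fields $\pa_j$, $\pa_k$ (so $\Phi^2[X,Y]$ drops out, tensoriality of $N^1$ being granted by its definition), whereas the paper's proof of Lemma~\ref{LLL} takes arbitrary $X$, $Y$ and verifies that the derivative-of-component terms cancel among $A$, $C$, $D$ --- your version is a legitimate shortcut of the same computation.
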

\begin{proof}In the calculation below we use the expressions
\begin{subequations}\label{724}
\begin{gather}\Phi X =(\Phi X)^i\frac{\pa}{\pa x^i}= \Phi^i_jX^j\frac{\pa}{\pa x^i},\\
[X,Y] =[X,Y]^j\frac{\pa}{\pa x^j}= \left(X^i\frac{\pa Y^j}{\pa x^i}-Y^i\frac{\pa X^j}{\pa x^i}\right) \frac{\pa}{\pa x^j},\\
N^1(X,Y) =\big(N^1\big)^i_{jk}X^jY^k\frac{\pa}{\pa x^i}.
\end{gather}
\end{subequations}
Let us introduce the notation \begin{gather*} A:=\Phi^2[X,Y],\qquad B:=[\Phi X,\Phi Y],\qquad C:=\Phi [\Phi X,Y], \\
 D:= \Phi[X,\Phi Y], \qquad E:= 2{\rm d} \eta\otimes \xi (X,Y).\end{gather*}
With \eqref{724}, we get for $A,\dots,E$ the expressions
\begin{gather} A =\Phi(\Phi[X,Y])^i\frac{\pa}{\pa x^i}=\Phi
^a_b(\Phi[X,Y])^b\frac{\pa }{\pa x^i}= \Phi^a_b\Phi^b_k[X,Y]^k\frac{\pa }{\pa x^a}\nonumber\\
\hphantom{A}{} = \big({-}\delta^a_k+\xi^a\eta_k\big)[X,Y]^k\frac{\pa}{\pa x^a},\label{EA}\\
 B = [\Phi X,\Phi Y]^a\frac{\pa }{\pa x^a}=\left[(\Phi X)^c\frac{\pa (\Phi
 Y)^a}{\pa x^c}-(\Phi Y)^c\frac{\pa(\Phi X)^a}{\pa
 x^c}\right]\frac{\pa}{\pa x^a}\nonumber\\
\hphantom{B}{} =\left[X^dY^b\left(\Phi^c_d\frac{\pa \Phi ^a_b}{\pa x^c}-\Phi^c_b\frac{\pa
 \Phi^a_d}{\pa x^c}\right)+\Phi^c_d\Phi^a_b\left(X^d\frac{\pa Y^b}{\pa x^c}-Y^d\frac{\pa
 X^b}{\pa x^c}\right)\right]\frac{\pa}{\pa x^a},\label{EB}\\
\label{EC}
C = \Phi^a_j[\Phi X,Y]^j\frac{\pa}{\pa x^a}= \Phi^a_j\left(\Phi^i_kX^k\frac{\pa Y^j}{\pa x^i}-Y^i\frac{\pa(\Phi^j_h X^h)}{\pa x^i}\right)\frac{\pa}{\pa x^a},\\
-C-D =\left[\Phi^a_j\Phi^i_k\left(Y^k\frac{\pa X^j}{\pa x^i}-X^k\frac{\pa Y^j}{\pa
 x^i}\right)+\big({-}\delta^a_k+\xi^a\eta_k\big)[Y,X]^k\right.\nonumber\\
 \left.\hphantom{-C-D =}{} +Y^iX^k\Phi^a_j\left(\frac{\pa \Phi^j_k}{\pa x^i}-\frac{\Phi^j_i}{\pa x^k}\right)\right]\frac{\pa}{\pa x^a},\label{ECD}
\\ \label{ECE}
E=2\hat{\Phi}_{ij}X^iY^j\xi^a\frac{\pa}{\pa x^a}.
\end{gather}
Introducing the values of $A$, $B$, $D+C$ and $E$ obtained
in equations \eqref{EA}, \eqref{EB}, \eqref{ECD}, respectively
\eqref{ECE}, we get for $A+B-C-D+E$ the values given in \eqref{TENC}.
\end{proof}

Note that formula given in \cite[pp.~7--10]{sas}
\begin{gather}\label{TENN}
\big(N^1\big)^i_{jk}=\Phi^h_k\big(\pa_h\Phi^i_j-\pa_j\Phi^i_h\big)-\Phi^h_j\big(\pa_h\Phi^i_k-\pa_k\Phi^i_h\big)+\big(\pa_j\xi^i\big)\eta_k-\big(\pa_k\xi^i\big)\eta_j
\end{gather}
is wrong. The same wrong formula appears also in \cite[equation~(3.7)]{tan2}.

The Heisenberg group $H_1$ is a Sasaki manifold \cite{boy}.

\subsection*{Acknowledgements}
This research was conducted in the framework of the
ANCS project programs PN 16 42 01 01/2016, 18 09 01 01/2018, 19 06
01 01/2019. I had the idea to apply Lemma~\ref{PRR} after the talk of Professor Zdan\v{e}k Du\v{s}ek at the 1st International Conference on Differential Geometry
(April 11--15, 2016, Fez, Morocco). I am grateful to Professor Zdan\v{e}k for his
correspondence in the first stages of the preparation of this paper. I also would
like to thank Professor Mohamed Tahar Kadaoul Abbassi for the hospitality during the
Fez conference and the partial financial support. I would like to thank to Professor
G.W.~Gibbons for answering to an e-mail. I am grateful to Professor
M.~Visinescu for initiating me in the world of Sasaki manifolds.
Thanks are also addressed to Professor R.D.~Grigore for
suggestions in some calculations. I am grateful to Professors Dmitri
Alekseevsky and Vicente Cort\'es for criticism and suggestions on
the first version of this paper. The author thanks the unknown referees'
who through their recommendations contributed to the improvement of
the text of the paper. The author thanks Drs. I.~Berceanu and M.~Babalic
for help in preparation of the text.

\pdfbookmark[1]{References}{ref}
\LastPageEnding

\end{document}